\documentclass[a4paper,11pt]{article}
\usepackage{graphicx}
\usepackage[T1]{fontenc}
\usepackage[utf8]{inputenc}
\usepackage[english]{babel}
\usepackage{csquotes}
\usepackage{authblk}

\usepackage{amsmath,amsthm}

\usepackage[bitstream-charter,greeklowercase= upright]{mathdesign}

\usepackage{subcaption}
\usepackage{geometry}
\usepackage{microtype,titlesec}
\usepackage{nicefrac}
\usepackage{empheq}

\usepackage[svgnames,x11names,table]{xcolor} 

\usepackage{pgfplots}
\usepackage{hyperref}
\hypersetup{
	colorlinks=true,        
	linkcolor=DodgerBlue,   
	citecolor=DarkRed,      
	urlcolor=Teal           
} 
\definecolor{myblue}{RGB}{0,114,178}
\definecolor{myred}{rgb}{1.0, 0.0, 0.22}
\definecolor{orange}{HTML}{CF4A30}
\definecolor{darkgreen}{HTML}{006400}

\usepackage{framed}
\usepackage[noend]{algorithmic}
\usepackage{algorithm}

\usepackage{cleveref}
\usepackage{pca}
\usepackage{cancel}
\usepackage{enumerate}
\usepackage{enumitem}

\newcommand{\nc}{\normalcolor}

\usepackage{bbm}

\numberwithin{equation}{section}
\numberwithin{theorem}{section}

\title{Debiasing optimal transport: classical and entropic}

\author[1]{Pierre-Cyril Aubin-Frankowski\thanks{pierre-cyril.aubin@enpc.fr}}
\author[1,2]{Virginie Ehrlacher\thanks{virginie.ehrlacher@enpc.fr}}
\author[1]{Gabriele Todeschi\thanks{gabriele.todeschi@enpc.fr}}

\affil[1]{CERMICS, CNRS, ENPC, Institut Polytechnique de Paris, Marne-la-Vallée, France}
\affil[2]{INRIA, PARMA team-project, Saclay, France}

\begin{document}
	\maketitle
	\begin{abstract}
		We study the notion of debiasability for cost functions arising in optimal transport. We call a symmetric cost function $c:\mathcal{X}\times\mathcal{X}\to\mathbb{R}\cup\{+\infty\}$ debiasable if it satisfies $c(x,y)\ge \tfrac{1}{2}c(x,x)+\tfrac{1}{2}c(y,y)$ for all $x,y\in\mathcal{X}$.
		Building on an equivalent characterization by an inf-representation $c(x,y)=\inf_{z\in\calZ}\psi(x,z)+\psi(y,z)$ for some set $\cal Z$ and some function $\psi: \calX \times \calZ \to \mathbb{R} \cup \{+\infty\}$, interpreted as a generalization of the midpoint identity for squared geodesic distances, we investigate the debiasability of costs defined on spaces of probability measures. Our primary focus is the entropic regularization of optimal transport across different regimes of the regularization parameter $\varepsilon \in [0,+\infty]$, encompassing classical optimal transport ($\varepsilon=0$), entropic optimal transport ($\varepsilon>0$), and the Maximum Mean Discrepancy ($\varepsilon=+\infty$). For $\varepsilon \in (0,+\infty]$, we investigate sufficient conditions, such as negative definiteness of the ground cost or continuity and positive definiteness of the induced kernel, handled then via a convex-nonconcave minimax argument. All our results extend naturally to unbalanced optimal transport settings and we generalize in this way the findings of \cite{feydy2019interpolating} and \cite{sejourne2019sinkhorn}. %

        As a byproduct, we derive novel barycentric formulas for entropic optimal transport. Extending our factorizations to include a time parameter $t\in(0,1)$, we construct two canonical entropic interpolation curves sharing the same variational origin: an intermediate measure $\eta_t\in\calP(\calZ)$, which coincides with the marginals of the Schr\"odinger bridge between $\mu$ and $\nu$, and an intermediate potential $z_t^*\in\calH_k$, which follows a segment in a flat reproducing kernel Hilbert space.%

	\end{abstract}

	\section{Introduction}
	
	\paragraph{Context.}
	For a long time, the main focus in optimal transport were the Wasserstein distances $W_p$. Being distances, they are already debiased, i.e.\ nonnegative and with null self-transportation cost. However with entropic regularizations, the self-transportation becomes positive. This justifies debiasing, e.g.\ as done below when defining the Sinkhorn divergence for entropic optimal transport. %
	
	\begin{definition}\label{def:debiasable}
		Given a set $\calX$ and a symmetric function $c: \calX \times \calX \to \R\cup\{+\infty\}$, which we call the cost, we define the debiased cost $c_0: \calX \times \calX \to \R\cup\{\pm\infty\}$ as
		\begin{equation}
			c_0(x,y)=c(x,y)-\frac{c(x,x)}{2}-\frac{c(y,y)}{2},
		\end{equation}
		with the rule $\infty-\infty=+\infty$. %
		We say that the cost $c$ is (positively) debiasable iff $c_0(x,y)\ge 0$ for all $x,y \in \calX$. We say that the cost $c$ is strictly (positively) debiasable if $c_0(x,y)> 0$ for $x\neq y \in \calX$. %
	\end{definition}
	
	\paragraph{Notation.} In this work, for $\bbX$ a measurable space, we are specifically interested in the sets $\calX=\calP(\bbX)$, the set of Borel probability measures over $\bbX$, or $\calX=\cal{M}_+(\bbX)$, the set of finite nonnegative measures, or $\calX=\cal{M}(\bbX)$, the set of finite signed measures. For any $\mu, \nu \in \cal{M}(\bbX)$, we use the shorthand $\mu \ll \nu $ to denote that $\mu$ is absolutely continuous w.r.t.\ $\nu$. When $\mu, \nu \in \calP(\bbX)$, we denote by $\Pi(\mu,\nu)$ the set of couplings in $\calP(\bbX \times \bbX)$ having first marginal $\mu$ and second marginal $\nu$. For any Hilbert space $H$, we denote by $( \cdot, \cdot )_H$ its inner product and by $|\cdot|_H$ its associated norm. We use the shorthand $\R_{\ge 0}$ for $[0,+\infty)$. Given a measurable map $T:\bbX \to \bbX $ and $\mu\in \calP(\bbX)$, $T_{\#}\mu$ is the pushforward measure of $\mu$ by $T$.

	We now list some examples of costs based on optimal transport theory, and which are covered by the results we prove hereafter. %

	\begin{example}[Optimal transport (OT) cost] The most usual optimal transport cost is the $c_{}$-Wasserstein cost defined for $\mu,\nu \in\calP(\bbX)$ as
		\begin{equation}\label{eq:W_c}
			\OT_{c_{}}(\mu,\nu)\coloneqq\inf_{\pi \in \Pi(\mu,\nu)} 
			\int_{\bbX\times\bbX} c_{}(x,y) \de\pi(x,y),
		\end{equation}
		where $c: \bbX \times \bbX \to \mathbb{R}\cup \{+\infty\}$ is the \emph{ground cost}. Particular examples are when $(\bbX,d_\bbX)$ is a Polish space, $c=d_\bbX^p$ for $p\in(0,+\infty)$, in which case $\OT_{d^p_\bbX}$ is a power of a distance with possibly infinite values \cite[Chapter 6]{villani2008optimal} and hence debiasable.
	\end{example}
	\begin{example}[Csiszar $f$-divergence] An entropy function $f:\R_{\ge 0}\to \R \cup \{+\infty\}$ is a convex, lower semicontinuous function such that $f(1)=0$. Its recession constant is defined as $\displaystyle f_\infty=\lim_{r\to+\infty} \frac{f(r)}{r}$.
		For $\mu,\nu\in\cal{M}_+(\bbX)$, the Csiszar divergence associated with $f$ is
		\begin{equation}\label{eq:Csiszar-div}
			F(\mu,\nu)\coloneqq\int_\bbX f\left(\frac{\de\mu}{\de\nu}(x)\right)\de\nu(x)+f_\infty \int_\bbX \de\mu^\perp
		\end{equation}
		where $\mu^\perp$ is the orthogonal part of the Lebesgue decomposition of $\mu$ with respect to $\nu$. For $f(r)=r \log(r) -r +1$ with $r\geq 0$, a prime example is the {\it Kullback--Leibler divergence} or {\it relative entropy} with respect to a reference measure $\nu \in \calM_+(\bbX)$. It is defined as:
		\begin{equation*}
			\KL(\mu, \nu) \coloneqq \left\{
			\begin{array}{ll}
				\int_{\bbX}\log \left(\frac{\de \mu}{\de \nu}(x)\right) {\rm d}  \mu(x)& \mbox{if } \mu \ll \nu ,  \\
				+\infty & \mbox{otherwise.  }
			\end{array}\right.
		\end{equation*}	
		Note that $\KL$ is not symmetric and does not satisfy the triangle inequality although $\KL(\mu,\mu)=0$. A symmetrization of $\KL$ such as the Jensen-Shannon divergence for $\mu,\nu\in\calP(\bbX)$ is thus debiasable. In the rest of the article, we will use the more classical notation $\KL(\mu|\nu)$ for $\KL(\mu,\nu)$. 
	\end{example}

	One can use entropy functions on the transport plans to relax optimal transport to mass unbalanced cases, or to regularize it, often at the price of introducing bias.

	\begin{example}[Unbalanced OT]
		\label{ex:UOT} 
		The unbalanced optimal transport problem as written in \cite[Problem 3.1]{Liero2017} reads for $\mu,\nu\in\cal{M}_+(\bbX)$
		\begin{equation}\label{eq:cost_UOT}
			\UOT_{c,F}(\mu,\nu)\coloneqq\inf_{\pi \in \calP(\bbX \times \bbX)} 
			\int_{\bbX\times\bbX} c_{}(x,y) \de\pi(x,y)+F((p_1)_\#\pi,\mu)+F((p_2)_\#\pi,\nu),
		\end{equation}%
		with $p_i$ the projection on the $i$-th component and $F:\cal{M}_+(\bbX) \times \cal{M}_+(\bbX) \to (-\infty, +\infty]$ is some Csiszar divergence.
	\end{example}
	
	\begin{example}[Entropic transport and Sinkhorn divergence]
		\label{ex:Sinkhorn}  For a given $\eps \ge 0$, the {\it entropic optimal transport (EOT) dissimilarity} is defined as 
		\begin{equation}\label{eq:oteps}
			\OT^\eps_c(\mu,\nu)\coloneqq\inf_{\pi \in \Pi(\mu,\nu)} 
			\int_{\bbX\times\bbX} c_{}(x,y) \de\pi(x,y)+\eps\KL(\pi|\mu\otimes \nu)=\inf_{\pi \in \Pi(\mu,\nu)} \eps\KL(\pi|e^{-c_{}/\eps}\mu\otimes\nu).
		\end{equation}
		As $\OT^\eps_c(\mu,\mu)\not =0$ in general \cite[Section 1.2]{feydy2019interpolating}, the {\it Sinkhorn divergence} was defined in \cite{genevay2018learning} as  
		\begin{equation}\label{eq:s_eps}
			\Seps(\mu,\nu)\coloneqq\OT^\eps_c(\mu,\nu)-\frac12\OT^\eps_c(\mu,\mu)-\frac12\OT^\eps_c(\nu,\nu)
		\end{equation}
		which indeed fulfills $\Seps(\mu,\mu)=0$. We refer to the introduction of \cite{Carlier2017}  for a thorough presentation  on $\OT^\eps_c$ and to that of \cite[p.~5]{lavenant2024riemannian} for $\Seps$. The entropic regularized unbalanced optimal transport problem as studied in \cite[Section 3]{Chizat2018} reads:
		\begin{equation}\label{eq:cost_UOT-eps}
			\UOT_{c,F}^\eps(\mu,\nu)=\inf_{\pi \in \calP(\bbX \times \bbX)} 
			\int_{\bbX\times\bbX} c_{}(x,y) \de\pi(x,y)+F((p_1)_\#\pi,\mu)+F((p_2)_\#\pi,\nu)+\eps\KL(\pi|\mu\otimes \nu).
		\end{equation}
	\end{example}
	\begin{example}[Maximum Mean Discrepancy]
		\label{ex:MMD} Taking formally $\eps\to +\infty$ in \eqref{eq:oteps} and \eqref{eq:s_eps}, we obtain
		\begin{equation}\label{eq:s_eps_infty_intro}
			\OT^\infty_c(\mu,\nu) \coloneqq \int_{\bbX\times \bbX} c_{}(x,y) \de \mu(x) \de\nu(y), \quad \operatorname{S}^\infty_c(\mu,\nu)\coloneqq \frac{1}{2}\int_{\bbX\times \bbX} (-c_{})(x,y)\de (\mu-\nu)(x) \de (\mu-\nu)(y).
		\end{equation}
		Clearly $\operatorname{S}^\infty_c$ corresponds to a quadratic form, whose sign depends on whether $(-c_{})$ is associated with a positive semidefinite operator or not. If $c_{}$ is negative definite, the quantity $\operatorname{S}^\infty_c$ is an example of squared Maximum Mean Discrepancy, originally introduced in \cite{Gretton2006}.
	\end{example}
	Our key question is whether having a debiasable ground cost $c$ lifts to having a debiasable cost on measures in the above examples. Unfortunately, this is not in general the case, but sufficient conditions exist.
	
	\paragraph{Main contribution.} Our main result reads as follows: 
	\begin{theorem} Fix a debiasable ground cost $c_{}:\bbX\times\bbX\to\R\cup\{+\infty\}$ over a Polish space $\bbX$. Then, for a given $\eps\in [0,+\infty]$, $\OT^\eps_c$ is debiasable provided that one of the following sufficient conditions holds:
		\begin{itemize}
			\item $\eps=0$, in which case \eqref{eq:oteps} is just classical OT (\Cref{prop:direct_proofs_ot} and \Cref{fact:ot});
			\item $\eps \in (0,+\infty)$ and either
			\begin{enumerate}
				\item $c$ behaves like a squared Euclidean distance (\Cref{cor:EOT_debias_Gaussian_like});
				\item $c$ is continuous and negative definite (\Cref{prop:Gaussian-factorisation-separable});
				\item $k=e^{-c/\eps}$ is a bounded continuous positive semidefinite kernel and the set of measures is restricted to a bounded set w.r.t.\ $c$ (\Cref{thm:EOT_debias_kernel_compact});
			\end{enumerate}
			\item $\eps=+\infty$ and $c$ is negative definite (\Cref{lem:eps_infty_neg_def}).
		\end{itemize}
		In all these cases, we can write
		\begin{equation}\label{eq:oteps_inf_intro}	\OT_c^\eps(\mu,\nu)=\inf_{\eta\in \calZ} \Psi(\mu,\eta)+\Psi(\nu,\eta),
		\end{equation}
		for some set $\calZ$ and some function $\Psi:\calP(\bbX)\times \calZ\to \R\cup\{+\infty\}$. In addition, we have that unbalanced versions \eqref{eq:cost_UOT-eps} of \eqref{eq:oteps} are also debiasable (\Cref{lem:OT_implies_UOT}). Finally, for any $\eps\in[0,+\infty]$, if $\OT_c^\eps$ is (strictly) debiasable, then so is $c$ by considering Dirac masses.
	\end{theorem}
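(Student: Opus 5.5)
The unifying mechanism is the inf-representation \eqref{eq:oteps_inf_intro}: any cost of the form $C(\mu,\nu)=\inf_{\eta\in\calZ}\Psi(\mu,\eta)+\Psi(\nu,\eta)$ is debiasable. Indeed, for any $\eta$ we have $C(\mu,\mu)\le 2\Psi(\mu,\eta)$ and $C(\nu,\nu)\le 2\Psi(\nu,\eta)$. Hence $\Psi(\mu,\eta)+\Psi(\nu,\eta)\ge \tfrac12 C(\mu,\mu)+\tfrac12 C(\nu,\nu)$, and taking the infimum over $\eta$ gives $C_0\ge 0$. The converse holds too: given $C$ debiasable, take $\calZ=\calX$ and $\Psi(x,z)=C(x,z)-\tfrac12 C(z,z)$ on the off-diagonal, with $\Psi(x,x)=\tfrac12 C(x,x)$. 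So in each regime the task is to exhibit a factorization of $\OT^\eps_c$, and the theorem assembles the cited results.

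\textbf{Case $\eps=0$.} Since $c$ is debiasable, write $c(x,y)=\inf_z\psi(x,z)+\psi(y,z)$. For a coupling $\pi\in\Pi(\mu,\nu)$, choose measurably a near-optimal $z(x,y)$ and set $\eta=z_\#\pi$. Then $\int c\,\de\pi\approx \int\psi(x,z)\,\de\gamma_1+\int\psi(y,z)\,\de\gamma_2\ge \OT_\psi(\mu,\eta)+\OT_\psi(\nu,\eta)$, where $\gamma_1,\gamma_2$ are the induced plans. Conversely, gluing two plans along $\eta$ yields a plan in $\Pi(\mu,\nu)$ with cost at most $\OT_\psi(\mu,\eta)+\OT_\psi(\nu,\eta)$. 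This gives $\OT_c=\inf_\eta \OT_\psi(\mu,\eta)+\OT_\psi(\nu,\eta)$, which is \Cref{prop:direct_proofs_ot}. The measurable-selection issue is the only technical point.

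\textbf{Case $\eps=+\infty$.} We have $\OT^\infty_{c,0}(\mu,\nu)=\operatorname{S}^\infty_c(\mu,\nu)$ for probability measures, because the diagonal terms cancel exactly. Negative definiteness of $c$ on zero-mass signed measures makes this quadratic form nonnegative, which is \Cref{lem:eps_infty_neg_def}. An inf-representation follows by completing the square in the associated Hilbert space, with $\eta$ the midpoint.

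\textbf{Case $\eps\in(0,\infty)$.} Here I invoke the three cited results. When $c$ is of squared-Euclidean type, the Gaussian convolution identity $e^{-|x-y|^2/\eps}\propto\int e^{-2|x-z|^2/\eps}e^{-2|y-z|^2/\eps}\de z$ lets $\KL(\pi|e^{-c/\eps}\mu\otimes\nu)$ be disintegrated through an intermediate $\eta$; this is \Cref{cor:EOT_debias_Gaussian_like}. For continuous negative definite $c$, Schoenberg's theorem writes $e^{-c/\eps}$ as a mixture of such factorizable kernels (\Cref{prop:Gaussian-factorisation-separable}). For a positive semidefinite kernel $k$, I use the dual form of $\OT^\eps$ with potentials in $\calH_k$ and a convex–nonconcave minimax exchange, valid on $c$-bounded sets (\Cref{thm:EOT_debias_kernel_compact}). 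This minimax step is the main obstacle, since the lack of concavity requires compactness or continuity to justify swapping inf and sup.

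\textbf{Unbalanced extension and converse.} The unbalanced case follows from \Cref{lem:OT_implies_UOT}: the $F$-terms are separable in $\mu$ and $\nu$, so they append to $\Psi$. For the converse, $\Pi(\delta_x,\delta_y)=\{\delta_{(x,y)}\}$ and the $\KL$ term vanishes, so $\OT^\eps_c(\delta_x,\delta_y)=c(x,y)$ for all $\eps$, including $\eps=\infty$. Debiasability of $\OT^\eps_c$ restricted to Dirac masses is therefore exactly that of $c$, and the same holds for strict debiasability since $x\neq y$ implies $\delta_x\neq\delta_y$.
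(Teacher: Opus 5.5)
Your overall architecture is the paper's: the theorem is an assembly of the cited results, and your regime-by-regime sketches track them. However, your opening ``converse'' is false, and your $\eps=0$ case depends on it.

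With $\calZ=\calX$ and $\Psi(x,z)=C(x,z)-\tfrac12 C(z,z)$ (your diagonal rule is the same formula), you get $\inf_z\Psi(x,z)+\Psi(y,z)=\inf_z C(x,z)+C(z,y)-C(z,z)$. This is exactly the cost $\tilde c$ of \eqref{eq:def_tilde_c_one_step}. That cost is only sandwiched between $\tfrac12C(x,x)+\tfrac12C(y,y)$ and $C(x,y)$; for $C(x,y)=|x-y|^2$ on $\R$ it equals $\tfrac12|x-y|^2$ (take $z=\frac{x+y}{2}$). Intermediate points $z\notin\{x,y\}$ must be priced out. This is why \Cref{prop:tpsd_feat_map} uses $\calZ=\calX\times\calX$ with $\psi(x,(x,y))=\tfrac12c(x,x)$, $\psi(x,(y,x))=c(x,y)-\tfrac12c(y,y)$, and $\psi(x,(u,v))=+\infty$ for $x\notin\{u,v\}$.

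Your $\eps=0$ argument opens with ``since $c$ is debiasable, write $c(x,y)=\inf_z\psi(x,z)+\psi(y,z)$'', so it needs this correct representation. It also supplies what your selection and gluing step requires: a Polish $\calZ=\bbX\times\bbX$ and a Borel $\psi$, since the sets $\{x=u\}$ and $\{x=v\}$ are closed. Note that this computation is \Cref{fact:ot}, not \Cref{prop:direct_proofs_ot}. Alternatively, the competitor argument of \Cref{prop:direct_proofs_ot}, using $(p_1,p_1)_\#\pi$ and $(p_2,p_2)_\#\pi$, gives debiasability directly, and \eqref{eq:oteps_inf_intro} then follows abstractly from \Cref{prop:tpsd_feat_map}.

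Two further justifications point at the wrong mechanism.

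\textbf{Negative definite case.} Schoenberg's theorem only makes $e^{-c/\eps}$ positive semidefinite. The remark after \Cref{lem:inf-rep_kernel} explains why that is not enough: the Gibbs step of the lifting needs $e^{-c(x,y)/\eps}=\int_\calZ\rho(x,z)\rho(y,z)\,\de\lambda(z)$ with $\rho>0$, whereas Mercer-type features may change sign. \Cref{prop:Gaussian-factorisation-separable} obtains \eqref{eq:completely_pos_kernel} from three ingredients:
\begin{itemize}
\item the embedding $c(x,y)=|\phi(x)-\phi(y)|^2_{H_c}+s(x)+s(y)$;
\item separability of $H_c$, which is where continuity of $c$ is used;
\item the Gaussian moment generating function under a product Gaussian $\lambda$ on $\R^I$, which replaces the Lebesgue measure unavailable in infinite dimension.
\end{itemize}

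\textbf{Unbalanced case.} The $F$-terms are not functions of $\mu,\nu$ alone: they involve the marginals $\tilde\mu,\tilde\nu$ of $\pi$. Moreover, the entropy is taken relative to $\mu\otimes\nu$ although $\pi$ need not lie in $\Pi(\mu,\nu)$. The missing step is the splitting $\KL(\pi|\mu\otimes\nu)=\KL(\pi|\tilde\mu\otimes\tilde\nu)+\KL(\tilde\mu|\mu)+\KL(\tilde\nu|\nu)$. It turns the unbalanced cost into $\inf_{\tilde\mu,\tilde\nu}\OT^\eps_c(\tilde\mu,\tilde\nu)+f(\tilde\mu,\mu)+f(\tilde\nu,\nu)$ with $f=\eps\KL+F$, to which \Cref{lem:operations_preserving}-\ref{fact:reg-c} applies. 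Only then do the penalties ``append'' to $\Psi$, via $\tilde\Psi(\mu,\eta)=\inf_{\tilde\mu}\Psi(\tilde\mu,\eta)+f(\tilde\mu,\mu)$.
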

	Negative and positive semidefinite kernels are defined in \Cref{prop:def_rep_kernels} along with their equivalent characterizations. Optimal transport is naturally related to $c$-transforms and $\inf$-formulas like \eqref{eq:feature_map_carac}. On the other hand, entropic OT has to do with soft-mins and sum-exp, due to the entropy/log-sum-exp duality. Sum-exp formulas moreover are related to reproducing kernels, which explains the coexistence of these various concepts in our proofs.

	While these factorizations serve primarily to establish debiasability, they also expose a rich interpolation structure. By extending the inf-representations to include an asymmetric time parameter $t \in (0,1)$, we show that finding the intermediary variable constructs natural entropic interpolation curves. More precisely, for $c(x,y)=|x-y|^2$ and $\psi_t(x,z)=\frac{|z-x|^2}{t}$ we obtain that
    \begin{equation}\label{eq:eta_t_Gaussien_intro}
			\OT_c^\eps(\mu,\nu)
			=\inf_{\eta\in\calP({\R^d})}  \OT^\eps_{\psi_{t}}(\mu,\eta)+\OT^\eps_{\psi_{1-t}}(\nu,\eta)+ \eps\,\KL(\eta|\lambda)-\frac{d\eps}{2}\log(t(1-t)\pi\eps)
		\end{equation}
    We show that the miminizer $\eta_t \in \calP(\calZ)$ coincides with the Schr\"odinger bridge marginals (\Cref{prop:equivalence_schrodinger}), providing a purely static, variational, barycentric formulation of the dynamical Schr\"odinger bridge. Through other inf-representations, we also obtain a natural $z_t \in \calH_k$ on a flat segment. This dual perspective provides a rigorous functional counterpart to McCann's displacement interpolation, contrasting non-linear mass transport in $\calP(\bbX)$ with linear mixtures in the flat geometry of reproducing kernel Hilbert spaces.

	\paragraph{Related works.} We refer to \cite{Sejourne2023} for an introductory review of our various examples of costs on measures spaces. We chose to focus on assessing debiasability and do not touch the very interesting direction of the statistical properties of estimating the debiased cost, or of metrization and convexity. Our formulas however sometimes correspond to doubly regularized entropic Wasserstein barycenters \cite{Chizat2025}. Concerning $\Seps$, a self-contained discussion on the limiting behaviours for $\eps\to 0$ or $\eps\to \infty$ can be found in \cite{Neumayer2021}. As evidenced in \cite[Section 7.1]{lavenant2024riemannian},  neither  $\Seps$ nor $\sqrt{\Seps}$  satisfy the triangle inequality. Nevertheless, if $\bbX\subset \R^n$ is compact, $\exp(-c_{}/\epsilon)$ is a positive definite and universal reproducing kernel, then \cite[Theorem 1]{feydy2019interpolating} shows that $\Seps$ is nonnegative, symmetric and metrizes the convergence in law, i.e.\ in this case $\OT_c^\eps$ is debiasable. We refer to \cite{sejourne2019sinkhorn} for the extension to the unbalanced case, and to \cite{houry2026gromovwassersteinscalesquarednorms} for handling Gromov-Wasserstein settings. 
	
	\paragraph{Structure of the article.} Section~2 develops the general theory of debiasable costs: we prove the inf-representation characterization (Proposition~2.1), establish preservation under standard operations (Lemma~2.4), and discuss strict debiasability (Corollary~2.2). Section~3 treats classical OT ($\eps=0$) and introduces the lifting from ground costs to transport costs via a probabilistic reformulation. Section~4 addresses entropic OT ($\eps\in(0,+\infty)$), progressing from costs admitting a log-sum-exp decomposition (Section~4.1), to negative definite costs (Section~4.2), to continuous reproducing kernels (Section~4.3). Section~5 handles the MMD limit $\eps=+\infty$, which requires a separate analysis. Section~6 discusses entropic interpolation and its relation to the Schrödinger bridge as a consequence of a time-dependent inf-representation. Useful auxiliary results are collected in the Appendix.

	\section{Characterization of debiasable costs}
	
	We start with general results on (strictly) debiasable costs, outside of any optimal transport context.
	
	\subsection{Inf-representation of debiasable costs}
	
	Our main idea is to use the following result which characterizes any debiasable cost through an inf-representation.
	
	\medskip
	
	\begin{proposition}[Characterization of debiasable costs]\label{prop:tpsd_feat_map}%
		Let $\calX$ be a set and take $c: \calX \times \calX \to \R\cup\{+\infty\}$.
		The following properties are equivalent
		\begin{enumerate}[labelindent=0cm,leftmargin=*,topsep=0.1cm,partopsep=0cm,parsep=0.1cm,itemsep=0.1cm,label=\roman*)]
			\item\label{it_tpsd} $c$ is symmetric and debiasable;%
			\item\label{it_featMap} there exists a set $\calZ$ and a function $\psi: \calX\times \calZ \to \R\cup\{+\infty\}$ such that
			\begin{equation}\label{eq:feature_map_carac}
				c(x,y) = \inf_{z\in \calZ} \psi(x,z) + \psi(y,z).
			\end{equation}
		\end{enumerate}
		A couple $(\cal Z, \psi)$ such that \eqref{eq:feature_map_carac} holds is called an \emph{inf-representation} of $c$.
		
		\medskip

		If $\calX$ is equipped with a (Hausdorff) topology, then $\calZ$ can be chosen (Hausdorff) topological as well. If $c$ is separately lower semicontinuous (l.s.c.)\ and $\calX \ni x\mapsto c(x,x)$ is continuous with $\calX$ a Hausdorff space, then $\psi(x,\cdot)+\psi(y,\cdot)$ can be chosen l.s.c.\ for all $x,y\in \calX$. If $c$ is furthermore jointly continuous, then we equip $\calX\times \calZ$ also with the product topology and $\psi$ can be chosen jointly l.s.c.
	\end{proposition}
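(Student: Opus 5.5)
The plan is to prove ii)$\Rightarrow$i) directly and then give an explicit construction for i)$\Rightarrow$ii) that is flexible enough to carry the topological refinements.

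For ii)$\Rightarrow$i), symmetry of \eqref{eq:feature_map_carac} is immediate. For debiasability, fix $x,y$. If $c(x,y)=+\infty$, then $c_0(x,y)=+\infty\ge0$ under the convention $\infty-\infty=+\infty$. Otherwise, for any $z$ with $\psi(x,z)+\psi(y,z)<+\infty$, the elementary inequality $2a+2b\ge 2\min(2a,2b)$ gives
\[
\psi(x,z)+\psi(y,z)\ge \tfrac12\bigl(2\psi(x,z)\bigr)+\tfrac12\bigl(2\psi(y,z)\bigr)\ge \tfrac12 c(x,x)+\tfrac12 c(y,y).
\]
The second inequality holds because $c(x,x)=\inf_z 2\psi(x,z)$. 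Taking the infimum over $z$ yields $c(x,y)\ge \tfrac12 c(x,x)+\tfrac12 c(y,y)$. If one of $c(x,x)$, $c(y,y)$ equals $-\infty$, the difference is again handled by the convention. Note that the value $c(x,x)=-\infty$ is excluded here since $c$ takes values in $\R\cup\{+\infty\}$.

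For i)$\Rightarrow$ii), take $\calZ=\calX\cup\{\star\}$, or more simply $\calZ=\calX\times\calX$ or the set of unordered pairs. A natural first choice is $\calZ=\calX$ with
\[
\psi(x,z)=\begin{cases}\tfrac12 c(x,x) & z=x,\\ c(x,z)-\tfrac12 c(z,z) & z\ne x,\end{cases}
\]
when $c(z,z)$ is finite. We then check three things:
\begin{itemize}
\item At $z=x$: $\psi(x,x)+\psi(y,x)=\tfrac12c(x,x)+c(y,x)-\tfrac12 c(x,x)=c(x,y)$, and similarly at $z=y$. So the infimum is at most $c(x,y)$.
\item For the diagonal: $\psi(x,x)+\psi(x,x)=c(x,x)$.
\item For a general $z$: debiasability gives $\psi(x,z)\ge \tfrac12 c(x,x)$, but $\psi(x,z)+\psi(y,z)\ge c(x,y)$ is not guaranteed.
\end{itemize}
So this choice fails, and we instead use pairs. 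Take $\calZ=\{(a,b)\in\calX^2\}$ and set $\psi(x,(a,b))=\tfrac12 c(a,b)$ if $x\in\{a,b\}$ and $+\infty$ otherwise. The infimum over $z$ of $\psi(x,z)+\psi(y,z)$ is then attained only at $z\in\{(x,y),(y,x)\}$, giving exactly $c(x,y)$, and for $x=y$ it gives $\inf_b c(x,b)$. To repair the diagonal we restrict to $\psi(x,(a,b))=c(a,b)-\tfrac12 c(b',b')$, where $b'$ is the other element. Concretely, $\psi(a,(a,b))=c(a,b)-\tfrac12c(b,b)$ and $\psi(b,(a,b))=\tfrac12 c(b,b)$ (ordered pair), and $+\infty$ elsewhere. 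Then at $z=(x,y)$ the sum is $c(x,y)$. Any $z$ where both $\psi(x,z)$ and $\psi(y,z)$ are finite with $x\neq y$ must be $(x,y)$ or $(y,x)$, both giving $c(x,y)$ by symmetry. For $x=y$, the admissible $z$ are $(x,b)$ with value $2c(x,b)-c(b,b)\ge c(x,x)$ by debiasability, and $(x,x)$ with value exactly $c(x,x)$. Hence \eqref{eq:feature_map_carac} holds, and debiasability is used precisely at this diagonal step. Infinite diagonal values need a separate case: if $c(b,b)=+\infty$, set $\psi=+\infty$ on that $z$ except for $z=(b,b)$.

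For the topological claims, equip $\calZ=\calX\times\calX$ with the product topology, which is Hausdorff when $\calX$ is. Lower semicontinuity is the main obstacle, because the indicator-type definition ($+\infty$ off $\{a,b\}$) yields $z\mapsto\psi(x,z)$ finite only on the closed set $\{a=x\}\cup\{b=x\}$. Indicators of closed sets are l.s.c. (the function is $+\infty$ on an open set), which is exactly where the Hausdorff assumption enters: the diagonal-type sets are then closed. On these closed pieces, $\psi(x,\cdot)$ equals $b\mapsto c(x,b)-\tfrac12c(b,b)$ or $a\mapsto\tfrac12c(a,a)$. These are l.s.c. under separate l.s.c. of $c$ and continuity of the diagonal map, so $\psi(x,\cdot)+\psi(y,\cdot)$ is l.s.c. as a sum of l.s.c. functions bounded below off infinity. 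Under joint continuity, the same formulas show that $\psi$ is jointly l.s.c. on $\calX\times\calZ$, since $\{(x,(a,b)): x=a\}$ is closed and $c(a,b)-\tfrac12 c(b,b)$ is continuous. The delicate point I would check carefully is the piece where $a=b=x$, where the two formulas must agree ($c(x,x)-\tfrac12c(x,x)=\tfrac12c(x,x)$). They do, so gluing the two l.s.c. pieces over closed sets preserves lower semicontinuity.
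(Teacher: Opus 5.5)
Your proof is correct and follows the paper's route. It uses the same $\calZ=\calX\times\calX$ construction with $\psi(x,z)$ finite only when $x$ is an endpoint of $z$ (you only swap which coordinate of $z=(a,b)$ carries $\tfrac12 c(\cdot,\cdot)$), the same infimum argument for ii)$\Rightarrow$i), and the product topology; your gluing of l.s.c.\ pieces over the closed sets $\{x\}\times\calX$, $\calX\times\{x\}$ (resp.\ $\{x=a\}$, $\{x=b\}$ in $\calX\times\calZ$) is a tidier version of the paper's net-based case analysis, and two harmless slips remain: for fixed $x$, on $\calX\times\{x\}$ your $\psi(x,\cdot)$ is the constant $\tfrac12 c(x,x)$ rather than $a\mapsto\tfrac12 c(a,a)$, and your diagonal check omits the points $(a,x)$, which give exactly $c(x,x)$ and so leave the infimum unchanged.
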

	
	We postpone the proof to \Cref{proof:tpsd_feat_map}. Eq.\eqref{eq:feature_map_carac} first appeared in \cite[Proposition 3.3]{aubin2022tropical}, we here expand the result to later tackle measurability issues through lower semicontinuity. The proof rests upon a constructive $\psi$ for which $\calZ=\calX \times \calX$ and $\psi$ is defined as: $\psi(x,(x,y))=c(x,x)/2$ and $\psi(x,(y,x))=c(x,y)- c(y,y)/2$ for all $x,y\in \calX$. All the other values of $\psi(x,(u,v))$, for $x,u,v\in \calX$ such that $x\not \in \{u,v\}$, are set to $+\infty$. Hence $\psi(x,z)$ is finite-valued only if $z$, seen as an assignment, has $x$ as an endpoint.
	
	\medskip
	
	In  (max,+)-terminology, a cost $c$ is debiasable if and only if $-c$ is tropically positive semidefinite, and the inf-representation is the (max,+)-analogue of a factorization of $c$, seen as matrix, through rank-1 operators. Since we are more interested in $\inf$ than $\sup$, we do not delve further into (max,+)-analysis concepts and refer to \cite{aubin2022tropical} for the latter.

	\paragraph{Examples of debiasable $c$ and inf-representations $(\calZ,\psi)$.}\label{examples_c} Note that if \eqref{eq:feature_map_carac} holds for $\calZ=\calX$ and $\psi=c$, then $c$ satisfies a triangular inequality and is thus a pseudo-metric. The following are cases where $c\neq \psi$. %
	\begin{enumerate}[label=\roman*),labelindent=0em,leftmargin=2em,topsep=0cm,partopsep=0cm,parsep=0cm,itemsep=2mm]
		\item For $\calX$ a Hilbert space and $c(x,y)=-(x,y)_\calX$, we can choose $\calZ=\calX$ and $\psi(x,z)=-\frac{1}{2}|x|^2_\calX+|x-z|_\calX^2$ for all $x,z\in \calX$.
		\item For $\calX$ a metric space such that there exist {\it $\delta$-midpoints} in the following sense 
		\begin{equation*}
			\forall \delta>0,\, \forall \, x,x'\in \calX, \, \exists y\in \calX , \, \max(d(x,y),d(x',y))\le \frac12 d(x,x')+\delta,
		\end{equation*}
		and $c(x,y)=d^2(x,y)$, by \cite[Lemma 3.2]{aubin2024beyond} we can choose $\calZ=\calX$ and $\psi(x,z)= 2 d^2(x,z)$ for all $x,z\in \calX$. This is the case of intrinsic metric spaces.
		\item Take $\calX$ to be any set, $\eps>0$ and $c(x,y)=-\eps \log(k(x,y))$ for all $x,y\in \calX$, where $k: \calX \times \calX \to \R_{>0}$ is a positive semidefinite (psd) kernel, i.e.\ one for which there exists a Hilbert space $H$ and embedding $\phi:\calX\to H$, such that $k(x,y)=( \phi(x),\phi(y))_H$ (see the definition in \Cref{prop:def_rep_kernels}). Then $c$ is debiasable (see \Cref{lem:inf-rep_kernel}) with $\psi$ given in \Cref{thm:EOT_debias_kernel_compact} (in the particular case of Dirac masses).  
	\end{enumerate}
	\medskip 
	
	As intuited from the metric example ii), eq.\eqref{eq:feature_map_carac} can be interpreted as a generalization of the concept of metric induced by $\psi$ and the case where there is a minimizer as an extension of the midpoint property.	The last example shows however that we can have $\calZ\neq \calX$ and fairly complex inf-representations. Even for finite sets $\calX$ with cardinality larger than $5$, $\calZ$ may have to have a strictly larger cardinality than $\calX$, see \cite[Proposition 3.3-Examples]{aubin2022tropical} for a counter-example based on the bipartite graph.
	
	\medskip

	We now detail some operations preserving debiasability which will be of use for studying optimal transport. They can also be used to check the debiasability of a given $c$.
	
	\begin{lemma}\label{lem:operations_preserving} Let $c:\calX\times \calX\to\R\cup \{+\infty\}$ be a debiasable cost, and take any $g:\calX\to\R\cup \{+\infty\}$ and $f:\calX\times \calX\to\R\cup \{+\infty\}$. Let $\calW$ be some (parameter) set and for all $w\in \calW$, let $c_w:\calX\times \calX\to\R\cup \{+\infty\}$ be a family of debiasable costs depending on the parameter $w\in\calW$. Let also $\{\alpha_w\}_{w\in\calW}\subset \R_{\ge 0}$. Then the following costs are debiasable:
		\begin{enumerate}[label=\roman*),labelindent=0em,leftmargin=2em,topsep=0cm,partopsep=0cm,parsep=0cm,itemsep=2mm]
			\item \label{fact:sum-c} $\displaystyle \sum_{w\in\calW} \alpha_w c_w$ ;
			\item \label{fact:inf-c} $\displaystyle \inf_{w\in\calW} c_w$;
			\item \label{fact:g-c} $\calX\times \calX \ni (x,y)\mapsto c(x,y)+g(x)+g(y)$;
			\item \label{fact:reg-c} $\displaystyle \calX \times \calX \ni (x,y)\mapsto \inf_{\tilde x, \tilde y\in\calX} f(\tilde x,x)+f(\tilde y, y)+c(\tilde x,\tilde y)$.
		\end{enumerate}
	\end{lemma}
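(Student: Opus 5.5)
The cleanest route is through the inf-representation of \Cref{prop:tpsd_feat_map}. Every debiasable cost can be written as $c_w(x,y)=\inf_{z\in\calZ_w}\psi_w(x,z)+\psi_w(y,z)$, and every cost of that form is symmetric and debiasable. So for each item I will either build an inf-representation for the new cost, or check the inequality $c_0\ge 0$ directly when that is shorter. Throughout I keep the convention $\infty-\infty=+\infty$: if some diagonal value is $+\infty$, the required inequality $c(x,y)\ge \frac12 c(x,x)+\frac12 c(y,y)$ has to be read with the same convention, and I will check that the extended-real arithmetic still works in each case.

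For i), I argue directly. For every $w$ we have $c_w(x,y)\ge \frac12 c_w(x,x)+\frac12 c_w(y,y)$. Multiply by $\alpha_w\ge 0$ and sum; the sums make sense in $\R\cup\{+\infty\}$ as long as each term is bounded below, otherwise we interpret them as suprema of finite partial sums. This gives the inequality for $\sum_w\alpha_w c_w$, and symmetry is clear. The one point to watch is a term $\alpha_w=0$ multiplying $+\infty$; I take $0\cdot\infty=0$, which is consistent on both sides of the inequality.

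For ii), the direct inequality fails to pass through an infimum in the obvious way: the infimum over $w$ on the left does not match the separate infima on the right. So I use representations instead. Take the disjoint union $\calZ=\bigsqcup_w \calZ_w$ and set $\psi(x,(w,z))=\psi_w(x,z)$. Then
$\inf_{(w,z)}\psi(x,(w,z))+\psi(y,(w,z))=\inf_w c_w(x,y)$,
and \Cref{prop:tpsd_feat_map} gives debiasability.

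Item iii) is immediate: the functions $g(x)+g(y)$ cancel exactly in $c_0$ when they are finite. If $g(x)=+\infty$, both the new $c(x,y)$ and the new $c(x,x)$ are $+\infty$, so the $\infty-\infty=+\infty$ rule gives $c_0=+\infty\ge0$. Equivalently, $\psi(x,z)+g(x)$ is an inf-representation of the new cost.

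For iv), I define $\tilde\psi(x,z)=\inf_{\tilde x}f(\tilde x,x)+\psi(\tilde x,z)$. Interchanging the infima then gives
$\inf_z\tilde\psi(x,z)+\tilde\psi(y,z)=\inf_{\tilde x,\tilde y}f(\tilde x,x)+f(\tilde y,y)+\inf_z\big(\psi(\tilde x,z)+\psi(\tilde y,z)\big)$,
which is the desired cost. This needs no symmetry of $f$, because each argument carries its own copy of $f$. The only real obstacle in the whole lemma is the bookkeeping of $+\infty$ values and extended sums; using inf-representations avoids most of it, since everything in them is an infimum of sums of terms in $\R\cup\{+\infty\}$.
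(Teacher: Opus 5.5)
Your proof is correct. For i) and iii) it is the paper's argument, but for ii) and iv) you take a genuinely different route. The paper never uses \Cref{prop:tpsd_feat_map} here; it checks $c_0\ge0$ directly from the definition.

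For ii), with $\tilde c=\inf_w c_w$, the paper fixes $w$ and uses $\tilde c(x,x)\le c_w(x,x)$ to get $c_w(x,y)-\frac12\tilde c(x,x)-\frac12\tilde c(y,y)\ge c_w(x,y)-\frac12c_w(x,x)-\frac12c_w(y,y)\ge0$. It then takes the infimum over $w$.

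For iv), it bounds the diagonal values by the particular candidates $(\tilde x,\tilde x)$ and $(\tilde y,\tilde y)$, i.e.\ $\tilde c(x,x)\le 2f(\tilde x,x)+c(\tilde x,\tilde x)$. The $f$-terms then cancel, leaving only the debiasability of $c$ at $(\tilde x,\tilde y)$, and one takes the infimum over $(\tilde x,\tilde y)$.

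So your assertion that the direct inequality ``fails to pass through an infimum'' is wrong. The mismatch between the joint infimum and the separate diagonal infima goes in the favourable direction, because the diagonal terms are subtracted.

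What your route buys is explicit inf-representations of the new costs: the disjoint union $\bigsqcup_w\calZ_w$, the shift $\psi+g$, and $\tilde\psi(x,z)=\inf_{\tilde x}f(\tilde x,x)+\psi(\tilde x,z)$. This is more informative and fits the spirit of the paper. Used in \Cref{lem:OT_implies_UOT} with $f(\tilde\mu,\mu)=\eps\KL(\tilde\mu|\mu)+F(\tilde\mu,\mu)$, it turns any factorization $\OT^\eps_c(\mu,\nu)=\inf_\eta\Psi(\mu,\eta)+\Psi(\nu,\eta)$ into an explicit one for $\UOT^\eps_{c,F}$. The price is that you need both directions of \Cref{prop:tpsd_feat_map} and an exchange of infima, whereas the paper uses only the definition.

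Two side remarks should be fixed.

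In i), your convention for infinite sums does not work. Termwise lower bounds do not make the series meaningful. Reading it as a supremum of finite partial sums breaks the inequality, since a supremum of sums is only bounded \emph{above} by the sum of the suprema. For a counterexample, take $\calX=\{x,y\}$, $\calW=\mathbb{N}$, $\alpha_w=1$, and alternate two debiasable costs: one with $(c(x,x),c(y,y))=(2,-2)$, the other with $(-2,2)$, both with off-diagonal value $0$. The supremum convention gives $+\infty$ on the diagonal and $0$ off it, so the debiased value is $-\infty$. Simply assume the family is summable (e.g.\ $\calW$ finite), as the paper's ``immediate'' tacitly does.

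In iv), $\tilde\psi$ may take the value $-\infty$ when $f$ is unbounded below. That is outside the codomain allowed in \Cref{prop:tpsd_feat_map}. The passage from an inf-representation to debiasability still goes through with the convention $\infty-\infty=+\infty$, but this has to be checked. So, contrary to your last sentence, the inf-representation route does not really sidestep the extended-value bookkeeping.
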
 
	\begin{proof} Properties \ref{fact:sum-c} and \ref{fact:g-c} are immediate. Concerning \ref{fact:inf-c}, set $\widetilde{c} = \inf_{w\in\calW} c_w$. Then, for any $w\in \calW$, it holds that for all $x,y\in\calX$,
		\[
		\begin{aligned}
			c_w(x,y)-\frac12 \tilde{c}(x,x)-\frac12 \tilde{c}(y,y)&=c_w(x,y)-\frac12\inf_{w\in \calW} c_w(x,x)-\frac12\inf_{w\in \calW} c_w(y,y) \\
			&\ge c_w(x,y)-\frac12 c_w(x,x)-\frac12 c_w(y,y)\ge 0,
		\end{aligned}
		\]
		and we conclude taking the infimum over $w\in \calW$.

		Concerning \ref{fact:reg-c}, set $\displaystyle \widetilde{c}(x,y) = \inf_{\tilde x, \tilde y\in\calX} f(\tilde x,x)+f(\tilde y, y)+c(\tilde x,\tilde y)$. Then, we have for all $x,y, \tilde x, \tilde y\in\calX$,
		\begin{multline*}
			f(\tilde x,x)+f(\tilde y, y)+c(\tilde x,\tilde y) -\frac12 \tilde c(x,x)-\frac12 \tilde c(y,y)  \\
			\begin{aligned}
				&\ge f(\tilde x,x)+f(\tilde y, y)+c(\tilde x,\tilde y)-\frac12 \left[ f(\tilde x,x)+f(\tilde x, x)+c(\tilde x,\tilde x)\right]-\frac12 \left[ f(\tilde y,y)+f(\tilde y, y)+c(\tilde y,\tilde y)\right] \\
				&=c(\tilde x,\tilde y)-\frac12  c(\tilde x,\tilde x)-\frac12 c(\tilde y,\tilde y)\ge 0,
			\end{aligned}
		\end{multline*}
		and we conclude taking the infimum over $\tilde x,\tilde y \in \calX$.
	\end{proof}

	\subsection{Injectivity and strictly debiasable costs}
	
	We now discuss situations where debiasability is strict.
	
	\begin{corollary}[Characterization of strictly debiasable costs for $\inf=\min$]\label{corr:strict_tpsd}
		If $c$ is debiasable, and furthermore the infimum is attained in \eqref{eq:feature_map_carac} on a set $\calZ_{x,y}\neq \emptyset$ for all $x,y\in \calX$, then the following properties are equivalent:
		\begin{enumerate}[labelindent=0cm,leftmargin=*,topsep=0.1cm,partopsep=0cm,parsep=0.1cm,itemsep=0.1cm,label=\roman*)]
			\item\label{it_tpsd_strict} $c$ is strictly debiasable, i.e. for all $x\neq y \in \calX$, $c(x,x)+c(y,y) < c(x,y)+c(y,x)$; 
			\item\label{it_featMap_strict} for all $x\neq y \in \calX$, we have $\emptyset=\calZ_{x,x} \cap \calZ_{y,y}$.
		\end{enumerate}
	\end{corollary}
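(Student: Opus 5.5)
Fix an inf-representation $(\calZ,\psi)$ of $c$ as in \eqref{eq:feature_map_carac}. Let $\calZ_{x,y}\subset\calZ$ be the nonempty set of minimizers of $z\mapsto\psi(x,z)+\psi(y,z)$, so that $c(x,y)=\psi(x,z)+\psi(y,z)$ for every $z\in\calZ_{x,y}$. The plan is to compare the cross value $c(x,y)$ with the diagonal values by evaluating each function at a common point $z$. Both implications come from one inequality chain and an analysis of when it is an equality.

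First step. For $z\in\calZ$ define $m_x:=\inf_{z}\psi(x,z)$. Then $c(x,x)=\inf_z 2\psi(x,z)=2m_x$, and $\calZ_{x,x}=\arg\min\psi(x,\cdot)$. For any $z\in\calZ_{x,y}$ we get
\[
c(x,y)=\psi(x,z)+\psi(y,z)\ge m_x+m_y=\tfrac12 c(x,x)+\tfrac12 c(y,y),
\]
and symmetry gives $c(y,x)=c(x,y)$. Equality $c(x,x)+c(y,y)=c(x,y)+c(y,x)$ therefore holds iff $\psi(x,z)+\psi(y,z)=m_x+m_y$ for such a $z$. Since each term dominates its own infimum, this is equivalent to $\psi(x,z)=m_x$ and $\psi(y,z)=m_y$, that is, $z\in\calZ_{x,x}\cap\calZ_{y,y}$.

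\ref{it_featMap_strict}$\Rightarrow$\ref{it_tpsd_strict}: take $x\ne y$ and suppose strictness fails. Then equality holds, and any $z\in\calZ_{x,y}$, which exists by hypothesis, lies in $\calZ_{x,x}\cap\calZ_{y,y}$, contradicting emptiness.

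\ref{it_tpsd_strict}$\Rightarrow$\ref{it_featMap_strict}: suppose some $z\in\calZ_{x,x}\cap\calZ_{y,y}$ with $x\neq y$. Then $c(x,y)\le\psi(x,z)+\psi(y,z)=m_x+m_y=\tfrac12(c(x,x)+c(y,y))$, which contradicts strict debiasability.

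The main obstacle is the handling of infinite values, for instance when $m_x=+\infty$, which happens when $c(x,x)=+\infty$. In that case the equality analysis must use the convention $\infty-\infty=+\infty$ of \Cref{def:debiasable}, and one should check that "$=$" still forces both terms to be minimal. I would treat this separately: if $c(x,x)=+\infty$, the hypothesis $\calZ_{x,x}\neq\emptyset$ still holds but $\psi(x,\cdot)\equiv+\infty$, so $\calZ_{x,x}=\calZ$. Strictness then has to be read through $c_0$ with the stated convention, and I would verify that both sides of the equivalence degenerate consistently in this case.
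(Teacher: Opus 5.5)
Your finite-valued argument is the paper's proof. The paper also argues through the negations of (i) and (ii). For one direction it takes an attained $z\in\calZ_{x,y}$ and splits $\psi(x,z)+\psi(y,z)=\inf\psi(x,\cdot)+\inf\psi(y,\cdot)$ into two termwise equalities. For the other it bounds $c(x,y)$ by $\psi(x,z)+\psi(y,z)$ at a common diagonal minimizer $z\in\calZ_{x,x}\cap\calZ_{y,y}$.

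What needs correcting is your plan for infinite values, which would not go through as announced. If (i) is read through $c_0$ with the convention $\infty-\infty=+\infty$, the two sides do \emph{not} degenerate consistently, and the equivalence is false. Take $\calX=\{a,b\}$, $\calZ=\{z_0\}$, $\psi(a,z_0)=+\infty$, $\psi(b,z_0)=0$. All infima are attained, and $c(a,a)=c(a,b)=+\infty$, $c(b,b)=0$. So $c_0(a,b)=+\infty>0$, yet $\calZ_{a,a}\cap\calZ_{b,b}=\{z_0\}\neq\emptyset$.

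The corollary holds only with (i) taken literally, as the strict inequality $c(x,x)+c(y,y)<c(x,y)+c(y,x)$ in $\R\cup\{+\infty\}$. With that reading the degenerate case is easy. If $c(x,x)=+\infty$, then $\psi(x,\cdot)\equiv+\infty$, hence $c(x,y)=+\infty$ and (i) fails at $(x,y)$. At the same time $\calZ_{x,x}=\calZ$, so $\calZ_{x,x}\cap\calZ_{y,y}=\calZ_{y,y}\neq\emptyset$ and (ii) fails as well.

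In that case your sentence ``any $z\in\calZ_{x,y}$ lies in $\calZ_{x,x}\cap\calZ_{y,y}$'' is false. For example, add a point $z_1$ with $\psi(a,z_1)=+\infty$ and $\psi(b,z_1)=1$: then $z_1\in\calZ_{a,b}=\calZ$ but $z_1\notin\calZ_{b,b}=\{z_0\}$. You must choose $z\in\calZ_{y,y}$ instead. Your termwise splitting is valid whenever $c(x,x)+c(y,y)<+\infty$, which covers all remaining cases. Your (i)$\Rightarrow$(ii) direction needs no change under the literal reading. The paper's own step ``Hence $z_{x,y}\in\calZ_{x,x}\cap\calZ_{y,y}$'' makes the same termwise inference without comment.
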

	\begin{proof}
		We are going to show equivalence of the negations:
		\begin{enumerate}[labelindent=0cm,leftmargin=*,topsep=0.1cm,partopsep=0cm,parsep=0.1cm,itemsep=0.1cm,label=\roman*)]
			\item[a)]\label{it_tpsd_strict_neg} there exists $x\neq y \in \calX$ such that $c(x,x)+c(y,y) = c(x,y)+c(y,x)$;
			\item[b)]\label{it_featMap_strict_neg} there exists $x\neq y\in \calX$  such that $\emptyset\neq \calZ_{x,x} \cap \calZ_{y,y}$.
		\end{enumerate}
		
		\medskip
		
		a) $\Rightarrow$ b). Fix any $x\neq y \in \calX$ as in a). Take $z_{x,y}\in \calZ_{x,y}$. Then ``$c(x,x)+c(y,y) = c(x,y)+c(y,x)$'' reads ``$ \inf_{z\in \calZ} \psi(x,z) +  \inf_{z'\in \calZ} \psi(y,z') = \psi(x,z_{x,y})+\psi(y,z_{x,y})$''. Hence $z_{x,y}\in \calZ_{x,x} \cap \calZ_{y,y}$.
		
		b) $\Rightarrow$ a). Fix any $x\neq y\in \calX$ as in b). Take $z_{x,y}\in \calZ_{x,x} \cap \calZ_{y,y}$. Then 
		\begin{equation*}
			c(x,x)+c(y,y) = 2\psi(x,z_{x,y})+2\psi(y,z_{x,y}) \ge 2\inf_{z\in \calZ} \psi(x,z)+\psi(y,z)= 2c(x,y).
		\end{equation*}
		The reverse inequality holds because $c$ is assumed to be debiasable.
		
	\end{proof}

	\paragraph{Direct proofs of strict debiasability.} Direct proofs to obtain that $c$ is strictly debiasable can be obtained showing that $c_0$ dominates a strictly debiased $\widetilde{c}_0$. In \cite[Section 2.3]{feydy2019interpolating} this is done for $\bbX$ a convex subset of a Banach space using a symmetrized Bregman divergence. The reasoning goes as follows. Take a symmetric separately convex $c$, if $x\mapsto f(x)= -c(x,x)$ is additionally (strictly) convex, then $c_0$ is clearly separately (strictly) convex. It is furthermore debiasable for points $x,y$ such that there exists $p_x\in \partial f(x)$ and such that $-p_x\in \partial(c(x,\cdot))(x)$, convexity of $c$ then gives $c(x,x)+\bracket{y-x,-p_x}\le c(x,y)$ and $c(y,y)+\bracket{x-y,-p_y}\le c(x,y)$, summing we obtain $2c_0(x,y)\ge \bracket{x-y,p_x-p_y}\ge 0$ since $\partial f$ is monotone. Hence (strict) debiasability is a consequence of a (strict) convexity of $x\mapsto -c(x,x)$ and assumptions on subdifferentials. Unfortunately this case is rather rare.

	Alternatively, instead of doing a step back from $c$ to $\psi$ via $\calZ_{x,y}$ as in \Cref{corr:strict_tpsd}, one can move forward by defining
	\begin{equation}\label{eq:def_tilde_c_one_step}
		\tilde{c}(x,y)=\inf_{x'\in \bbX}c(x,x')+c(x',y)-c(x',x').
	\end{equation}
	Obviously $\tilde{c}$ is always debiasable by \Cref{prop:tpsd_feat_map}. Moreover we have that
	\begin{equation*}
		\text{$\frac{c(x,x)}{2}+\frac{c(y,y)}{2}\stackrel{\eqref{eq:def_tilde_c_one_step}+c \text{ debiasable}}{\le} \tilde{c}(x,y) \stackrel{x'=x}{\le} c(x,y)$.}
	\end{equation*}
	Setting $x=y$ in the above gives $\tilde{c}(x,x)=c(x,x)$. So if $\tilde{c}$ is strictly debiasable, then so is $c$.%

	\section{Debiasing (U)OT for $\eps=0$}
	
	We start with a simple lemma showing that we can focus on $\OT_c^\eps$ exclusively and deduce from it the debiasability of $\UOT_{c,F}^\eps$.

	\begin{lemma}\label{lem:OT_implies_UOT} Let $\eps\in[0,+\infty)$. If $\OT_c^\eps$ is debiasable, then so is $\UOT_{c,F}^\eps$.
	\end{lemma}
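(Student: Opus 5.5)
The plan is to rewrite $\UOT_{c,F}^\eps$ as an infimum over marginals of a balanced entropic problem plus penalties, and then use \Cref{lem:operations_preserving}. Any $\pi\in\calM_+(\bbX\times\bbX)$ with finite objective has marginals $\tilde\mu=(p_1)_\#\pi$ and $\tilde\nu=(p_2)_\#\pi$ of equal mass $m=\pi(\bbX\times\bbX)$. I would therefore split the infimum in \eqref{eq:cost_UOT-eps} into an outer infimum over pairs $(\tilde\mu,\tilde\nu)$ with $\tilde\mu(\bbX)=\tilde\nu(\bbX)=m$ and an inner infimum over $\pi\in\Pi(\tilde\mu,\tilde\nu)$ (couplings with mass $m$). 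This gives
\[
\UOT^\eps_{c,F}(\mu,\nu)=\inf_{\tilde\mu,\tilde\nu} F(\tilde\mu,\mu)+F(\tilde\nu,\nu)+G(\tilde\mu,\tilde\nu),
\qquad
G(\tilde\mu,\tilde\nu)=\inf_{\pi\in\Pi(\tilde\mu,\tilde\nu)}\int c\,\de\pi+\eps\KL(\pi|\mu\otimes\nu).
\]
This has exactly the shape of item \ref{fact:reg-c} of \Cref{lem:operations_preserving}, with $f=F$, provided $G$ is debiasable on $\calM_+(\bbX)$. The trouble is that $G$ still depends on $\mu,\nu$ through the reference measure $\mu\otimes\nu$.

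To remove that dependence I would use the chain rule for $\KL$. If $\pi\ll\tilde\mu\otimes\tilde\nu$, then
\[
\KL(\pi|\mu\otimes\nu)=\KL(\pi|\tilde\mu\otimes\tilde\nu)+\int\log\tfrac{\de\tilde\mu}{\de\mu}\,\de\tilde\mu+\int\log\tfrac{\de\tilde\nu}{\de\nu}\,\de\tilde\nu,
\]
up to the mass-correction terms of the generalized $\KL$. These extra terms depend on $(\tilde\mu,\mu)$ and $(\tilde\nu,\nu)$ separately, so they can be absorbed into a new function $\tilde f(\tilde\mu,\mu)=F(\tilde\mu,\mu)+\eps\KL(\tilde\mu|\mu)+(\text{mass terms})$. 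The remaining core is
\[
\inf_{\pi\in\Pi(\tilde\mu,\tilde\nu)}\int c\,\de\pi+\eps\KL(\pi|\tilde\mu\otimes\tilde\nu),
\]
which is a function of $(\tilde\mu,\tilde\nu)$ alone.

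Next, write $\tilde\mu=m\alpha$ and $\tilde\nu=m\beta$ with $\alpha,\beta\in\calP(\bbX)$. Scaling then gives $m\,\OT^\eps_c(\alpha,\beta)$ plus a term depending only on $m$, coming from $\KL$ homogeneity ($\log m$-type terms). An $m$-only term is symmetric and split between the two arguments, so it has the form $g(\tilde\mu)+g(\tilde\nu)$ and is handled by item \ref{fact:g-c} of \Cref{lem:operations_preserving}. Define the core cost $C$ on $\calM_+(\bbX)$ by $C(\tilde\mu,\tilde\nu)=m\,\OT_c^\eps(\alpha,\beta)$ when the masses agree and $+\infty$ otherwise. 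To check that $C$ is debiasable:
\begin{itemize}
\item If the masses differ, $C=+\infty$, so the inequality holds trivially.
\item If the masses agree, $C_0=m\,(\OT^\eps_c)_0(\alpha,\beta)\ge0$ by the hypothesis.
\end{itemize}
Applying item \ref{fact:reg-c} with $f=\tilde f$, then item \ref{fact:g-c}, yields that $\UOT^\eps_{c,F}$ is debiasable.

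The main obstacle is the $\KL$ chain-rule bookkeeping. Three cases need care:
\begin{itemize}
\item couplings not absolutely continuous w.r.t.\ $\tilde\mu\otimes\tilde\nu$ but still absolutely continuous w.r.t.\ $\mu\otimes\nu$; this cannot happen, since $\pi\ll\mu\otimes\nu$ and $\pi$ has marginals $\tilde\mu,\tilde\nu$ forces $\tilde\mu\ll\mu$, $\tilde\nu\ll\nu$, and then $\pi\ll\tilde\mu\otimes\tilde\nu$ needs a short argument;
\item the zero-mass case;
\item whether \eqref{eq:cost_UOT-eps} is read over $\calP$ or $\calM_+$; I would work with $\calM_+$ and the generalized $\KL$.
\end{itemize}
I would first try to verify the decomposition identity exactly, including the infinite-value conventions, since everything else reduces to \Cref{lem:operations_preserving}. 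For $\eps=0$ the $\KL$ terms vanish and the argument is immediate.
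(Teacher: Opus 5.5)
Your outline matches the paper's proof: split off the marginals $\tilde\mu,\tilde\nu$, use the chain rule to replace the reference measure $\mu\otimes\nu$ by $\tilde\mu\otimes\tilde\nu$, and conclude with \Cref{lem:operations_preserving}-\ref{fact:reg-c}. The gap is in the setting you chose to run it in, namely $\pi\in\calM_+(\bbX\times\bbX)$ with the mass-corrected divergence $\KL(\pi|\rho)=\int\log\frac{\de\pi}{\de\rho}\,\de\pi-\pi(\bbX\times\bbX)+\rho(\bbX\times\bbX)$. There your claim that the correction terms ``depend on $(\tilde\mu,\mu)$ and $(\tilde\nu,\nu)$ separately'' is false. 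With $m=\pi(\bbX\times\bbX)$ one finds
\[
\KL(\pi|\mu\otimes\nu)-\KL(\pi|\tilde\mu\otimes\tilde\nu)-\KL(\tilde\mu|\mu)-\KL(\tilde\nu|\nu)=\mu(\bbX)\,\nu(\bbX)-\mu(\bbX)-\nu(\bbX)+2m-m^2 .
\]
The term $\eps\,\mu(\bbX)\nu(\bbX)$ comes from the correction $+\rho(\bbX\times\bbX)$ with $\rho=\mu\otimes\nu$. It does not involve $\pi$, it cannot be absorbed into $\tilde f(\tilde\mu,\mu)$, and it is not of the form $g(\mu)+g(\nu)$ required by item \ref{fact:g-c}. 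Its contribution to the debiased cost is $-\frac{\eps}{2}(\mu(\bbX)-\nu(\bbX))^2\le 0$.

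No argument can repair this, because in that convention the statement is false. Take a one-point space with $c=0$, so $\OT^\eps_c$ is trivially debiasable, and let $F$ be the Csiszár divergence with $f(r)=r\log r-r+1$. Measures are then masses $a,b>0$, and the optimal plan has mass $(ab)^\theta$ with $\theta=\frac{1+\eps}{2+\eps}<1$. This gives $\UOT^\eps_{c,F}(a,b)=a+b+\eps ab-(2+\eps)(ab)^\theta$. Its debiased version is $-\frac{\eps}{2}(a-b)^2+\frac{2+\eps}{2}(a^\theta-b^\theta)^2$, which is negative for $b=1$ and $a$ large. This is consistent with the extra mass term $\frac{\eps}{2}(\mu(\bbX)-\nu(\bbX))^2$ in the unbalanced Sinkhorn divergence of \cite{sejourne2019sinkhorn}.

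The remedy is to use the paper's conventions. In \eqref{eq:cost_UOT-eps} the plan ranges over $\calP(\bbX\times\bbX)$, and $\KL(\pi|\rho)=\int\log\frac{\de\pi}{\de\rho}\,\de\pi$ carries no mass terms. Then $\tilde\mu,\tilde\nu\in\calP(\bbX)$ and $m=1$. The identity $\KL(\pi|\mu\otimes\nu)=\KL(\pi|\tilde\mu\otimes\tilde\nu)+\KL(\tilde\mu|\mu)+\KL(\tilde\nu|\nu)$ is exact, which gives
\[
\UOT^\eps_{c,F}(\mu,\nu)=\inf_{\tilde\mu,\tilde\nu\in\calP(\bbX)}\OT^\eps_c(\tilde\mu,\tilde\nu)+\eps\KL(\tilde\mu|\mu)+\eps\KL(\tilde\nu|\nu)+F(\tilde\mu,\mu)+F(\tilde\nu,\nu).
\]
Item \ref{fact:reg-c} applies directly to this, using the hypothesis on $\OT^\eps_c$. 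Your rescaling by $m$ and the appeal to item \ref{fact:g-c} then become vacuous, and what remains is exactly the paper's proof. If you do want $\pi\in\calM_+$, keep the uncorrected $\KL$. Rescaling then produces only the term $-\eps m\log m$, which your splitting into $g(\tilde\mu)+g(\tilde\nu)$ handles.
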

	\begin{proof}
		We can easily rewrite $\UOT_{c,F}^\eps$ in a different form as follows
		\begin{equation}\label{eq:UOT-explicit}
			\UOT_{c,F}^\eps(\mu,\nu)=\inf_{\substack{\pi \in \Pi(\tilde \mu,\tilde \nu) \\ \tilde \mu, \tilde \nu \in\calP(\bbX)}} 
			\int_{\bbX\times\bbX} c_{}(x,y) \de\pi(x,y)+\eps\KL(\pi| \mu\otimes \nu)+F(\tilde \mu,\mu)+F(\tilde \nu,\nu),
		\end{equation}
		where we made the marginals of $\pi$ explicit in the optimization problem. Since the minimization of $\KL(\pi | \mu\otimes \nu)$ forces $\tilde\mu\ll\mu$, $\tilde\nu\ll\nu$, and therefore $\tilde\mu\otimes\tilde\nu\ll\mu\otimes\nu$, using \eqref{eq:KL-product} we can write
		\[
		\begin{aligned}
			\KL(\pi|\mu\otimes\nu)&=\int_{\bbX\times\bbX} \log\left(\frac{\de\pi}{\de\mu\otimes\nu}(x,y)\right)\de\pi(x,y)\\
			&=\int_{\bbX\times\bbX} \left[\log\left(\frac{\de\pi}{\de\tilde\mu\otimes\tilde\nu}(x,y)\right)+\log\left(\frac{\de\tilde\mu\otimes\tilde\nu}{\de\mu\otimes\nu}(x,y)\right)\right]\de\pi(x,y)\\
			&=\KL(\pi|\tilde\mu\otimes\tilde\nu)+\KL(\tilde\mu|\mu)+\KL(\tilde\nu|\nu).
		\end{aligned}
		\]
		Using this formula in \eqref{eq:UOT-explicit}, we obtain
		\begin{align*}
			\UOT_{c,F}^\eps(\mu,\nu)&=\inf_{\substack{\pi \in \Pi(\tilde \mu,\tilde \nu) \\ \tilde \mu, \tilde \nu \in\calP(\bbX)}} 
			\int_{\bbX\times\bbX} c_{}(x,y) \de\pi(x,y)+\eps\KL(\pi|\tilde\mu\otimes \tilde\nu)+\KL(\tilde\mu|\mu)+\KL(\tilde\nu|\nu)+ F(\tilde \mu,\mu)+F(\tilde \nu,\nu)\\
			&=\inf_{\tilde \mu, \tilde \nu \in\calP(\bbX)} 
			\OT_c^\eps(\tilde\mu,\tilde\nu)+\eps\KL(\tilde\mu|\mu)+\eps\KL(\tilde\nu|\nu)+F(\tilde \mu,\mu)+F(\tilde \nu,\nu),
		\end{align*}    
		Since we assumed that $ \OT_c^\eps(\cdot,\cdot)$ is debiasable, we can conclude using \Cref{lem:operations_preserving}-\ref{fact:reg-c}.
	\end{proof}

	Focusing now on $\eps=0$ and $\OT_{c_{}}$, we can achieve a direct proof of debiasability without using the $\inf$-representation.%
	
	\begin{proposition}[Direct proof for optimal transport]\label{prop:direct_proofs_ot}
		Let $\bbX$ be a Polish space. If $c_{}$ is debiasable then $\OT_{c_{}}$ is debiasable. Moreover, if a minimizer to \eqref{eq:W_c} exists for any $\mu,\nu\in\calP(\bbX)$, then $c_{}$ strictly debiasable implies that $\OT_{c_{}}$ is strictly debiasable. Finally, $\OT_{c_{}}$ (strictly) debiasable implies that $c_{}$ is (strictly) debiasable.  %
	\end{proposition}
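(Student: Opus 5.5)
Given optimal couplings (or near-optimal ones) $\pi_{\mu\mu}\in\Pi(\mu,\mu)$ and $\pi_{\nu\nu}\in\Pi(\nu,\nu)$, the natural candidate is to compare with an arbitrary $\pi\in\Pi(\mu,\nu)$ by gluing. The plan is to start from any $\pi\in\Pi(\mu,\nu)$ with finite cost and observe that the pushforwards $\pi^x\coloneqq(p_1,p_1)_\#\pi\in\Pi(\mu,\mu)$ and $\pi^y\coloneqq(p_2,p_2)_\#\pi\in\Pi(\nu,\nu)$ are admissible self-couplings supported on the diagonal. Then
\[
\OT_c(\mu,\mu)+\OT_c(\nu,\nu)\le \int c(x,x)\de\mu(x)+\int c(y,y)\de\nu(y)=\int_{\bbX\times\bbX} \big(c(x,x)+c(y,y)\big)\de\pi(x,y)\le 2\int c\,\de\pi,
\]
where the last inequality is pointwise debiasability of $c$. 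Taking the infimum over $\pi$ gives $\OT_c(\mu,\nu)\ge\frac12\OT_c(\mu,\mu)+\frac12\OT_c(\nu,\nu)$. Measurability and integrability have to be handled, since $x\mapsto c(x,x)$ may not be bounded below. I would assume, as is implicit in the definition of \eqref{eq:W_c}, that $c$ is l.s.c. (hence Borel), and interpret integrals with the convention that the diagonal integrals are defined whenever $\int c\,\de\pi<+\infty$. Debiasability gives $c(x,x)+c(y,y)\le 2c(x,y)$, so the positive parts are controlled; if a diagonal integral is $-\infty$, the inequality is trivial with the rule $\infty-\infty=+\infty$. If $\OT_c(\mu,\nu)=+\infty$, there is nothing to prove.

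For strictness, suppose a minimizer $\pi^*$ exists and $\mu\ne\nu$, and assume equality $2\OT_c(\mu,\nu)=\OT_c(\mu,\mu)+\OT_c(\nu,\nu)$ with finite values. The chain above then forces $\int \big(2c(x,y)-c(x,x)-c(y,y)\big)\de\pi^*=0$. Since the integrand is nonnegative, it vanishes $\pi^*$-a.e. Strict debiasability then gives $x=y$ for $\pi^*$-a.e. $(x,y)$, so $\pi^*$ is concentrated on the diagonal and $\mu=(p_1)_\#\pi^*=(p_2)_\#\pi^*=\nu$, a contradiction. The main obstacle is the case where the costs are infinite. If $\OT_c(\mu,\nu)=+\infty$, then $c_0=+\infty>0$. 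If the self-costs are infinite while $\OT_c(\mu,\nu)$ is finite, this contradicts the displayed chain, which bounds them by $2\OT_c(\mu,\nu)$ unless one of them is $-\infty$; that case I would treat separately, or exclude by assuming $c$ is bounded below, which the existence of minimizers implicitly requires.

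For the converse, take Dirac masses: $\Pi(\delta_x,\delta_y)=\{\delta_{(x,y)}\}$, so $\OT_c(\delta_x,\delta_y)=c(x,y)$. Debiasability (or strict debiasability, since $\delta_x\neq\delta_y$ iff $x\neq y$) of $\OT_c$ therefore restricts exactly to that of $c$.
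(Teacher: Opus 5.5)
Your proof is correct and is essentially the paper's argument. The paper writes it with random variables $T_1,T_2$ through the probabilistic form of OT, but it explicitly notes that the identical non-probabilistic proof uses $(p_1,p_1)_\#\pi$ and $(p_2,p_2)_\#\pi$ as competitors, exactly as you do, and it handles the converse with the same Dirac-mass argument. Your strictness argument (the nonnegative integrand $c_0$ vanishes $\pi^*$-a.e., which forces $\pi^*$ onto the diagonal and hence $\mu=\nu$) simply spells out the step the paper compresses into ``the last inequality is strict''.
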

	\begin{proof}
		Assume that $c_{}$ is debiasable. Applying \Cref{lem:prob-form}, we write the optimal transport problem in the following probabilistic form
		\begin{equation}\label{eq:W_c-prob}
			\OT(\mu,\nu)=\inf_{(T_1)_\# \lambda=\mu, (T_2)_\# \lambda=\nu}  \int_{[0,1]} c_{}(T_1(w),T_2(w)) \de \lambda(w)
		\end{equation}
		where $([0,1],\calF,\lambda)$ is the probability space $[0,1]$ endowed with its Borel $\sigma$-algebra and the restricted Lebesgue measure, and the infimum is taken over random variables $T_1,T_2$, i.e. measurable maps defined on $[0,1]$ and valued in $\bbX$. Since $\bbX$ is a Polish space, the formulation \eqref{eq:W_c-prob} is equivalent to \eqref{eq:W_c}. Then, for $T_1$ and $T_2$ such that $(T_1)_\# \lambda=\mu, (T_2)_\# \lambda=\nu$, we can write
		\begin{multline*}
			\int_{[0,1]} c_{}(T_1(w),T_2(w)) \de \lambda(w) - \frac12 \OT(\mu,\mu)-\frac12 \OT(\nu,\nu)\ge\\
			\begin{aligned}
				&\ge\int_{[0,1]} c_{}(T_1(w),T_2(w)) \de \lambda(w) - \frac12 \int_{[0,1]} c_{}(T_1(w),T_1(w)) \de \lambda(w)-\frac12 \int_{[0,1]} c_{}(T_2(w),T_2(w)) \de \lambda(w) \\
				&=\int_{[0,1]} \left[ c_{}(T_1(w),T_2(w))-\frac12 c_{}(T_1(w),T_1(w))-\frac12 c_{}(T_2(w),T_2(w)) \right]\de \lambda(w) \ge0.
			\end{aligned}
		\end{multline*}
		Taking the infimum w.r.t.\ $T_1$ and $T_2$ shows that $c_{}$ debiasable implies $\OT_{c_{}}$ debiasable. An alternative identical non-probabilistic proof would have been to consider a transport plan $\pi\in \Pi(\mu,\nu)$ and $(\p_1,\p_1)_\#\pi$ and $(\p_2,\p_2)_\#\pi$ as candidates for $\OT(\mu,\mu)$ and $\OT(\nu,\nu)$.
		
		If the cost $c_{}$ is strictly debiasable, and an optimal transport plan exists, implying then also the existence of optimal maps $T_1,T_2$ (see \Cref{lem:prob-form}), the last inequality is strict and $\OT_{c_{}}$ is strictly debiasable as well.
		To prove the converse, simply consider Dirac masses $\mu=\delta_x$ and $\nu=\delta_y$ and the original formulation \eqref{eq:W_c}.
	\end{proof}
	
	Existence of optimal transport plans can be established under rather standard hypotheses in optimal transport, see for example \cite[Chapter 1]{santambrogio2015optimal}.
	
	When trying to mimic the same proof with entropic regularization, the critical issue is that now $T_1$ and $T_2$ get entangled through $\pi$ in the term $\epsilon\KL(\pi|\mu\otimes \nu)$, and their contributions are hard to separate. Rather than showing directly that the debiased cost is nonnegative as in \Cref{prop:direct_proofs_ot}, the main idea will be to make explicit an inf-representation \eqref{eq:feature_map_carac}, obtained performing a lifting to work on plans, as well as using the chain rule for $\KL$. To foster intuition, we first do an alternative proof to \Cref{prop:direct_proofs_ot} using \Cref{prop:tpsd_feat_map} and \Cref{lem:comm_inf-int}.

	\begin{lemma}\label{fact:ot}
		Let $c:\bbX\times\bbX\to\R\cup\{+\infty\}$ be l.s.c.\  and assume there exist a Polish space $\calZ$ and $\psi:\calP(\bbX)\times \calZ\to \R\cup\{+\infty\}$, with $\psi(x,\cdot)$ l.s.c.\ for any $x\in \bbX$, such that
		\[
		c(x,y) = \inf_{z\in \calZ} \psi(x,z) + \psi(y,z).
		\]%
		Then the optimal transport cost associated with $c_{}$ can be written as
		\begin{equation}\label{eq:ot-explicit}
			\OT_{c_{}}(\mu,\nu)=\inf_{\eta\in \calP(\calZ)} \OT_{\psi}(\mu,\eta)+\OT_{\psi}(\nu,\eta).    
		\end{equation}
	\end{lemma}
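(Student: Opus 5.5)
We prove the two inequalities in \eqref{eq:ot-explicit} separately. Here $\psi$ is understood as a function on $\bbX\times\calZ$ (a ground cost between $\bbX$ and $\calZ$), and $\OT_\psi(\mu,\eta)=\inf_{\gamma\in\Pi(\mu,\eta)}\int\psi\,\de\gamma$ is the corresponding cross-space transport cost.

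\emph{The inequality ``$\le$''.} Fix $\eta\in\calP(\calZ)$ and couplings $\gamma_1\in\Pi(\mu,\eta)$, $\gamma_2\in\Pi(\nu,\eta)$ with finite costs.

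1. Since $\bbX$ and $\calZ$ are Polish, disintegrate both couplings with respect to their common marginal $\eta$: $\gamma_1=\int\gamma_1^z\,\de\eta(z)$ and $\gamma_2=\int\gamma_2^z\,\de\eta(z)$.
2. Glue them into the three-marginal measure $\sigma=\int\gamma_1^z\otimes\gamma_2^z\otimes\delta_z\,\de\eta(z)$ on $\bbX\times\bbX\times\calZ$.
3. Let $\pi=(p_1,p_2)_\#\sigma$; then $\pi\in\Pi(\mu,\nu)$.
4. Using $c(x,y)\le\psi(x,z)+\psi(y,z)$ pointwise, we get
\[
\int c\,\de\pi=\int c(x,y)\,\de\sigma\le\int\psi(x,z)\,\de\sigma+\int\psi(y,z)\,\de\sigma=\int\psi\,\de\gamma_1+\int\psi\,\de\gamma_2 .
\]

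Taking infima over $\gamma_1,\gamma_2,\eta$ gives $\OT_c(\mu,\nu)\le$ the right-hand side. This direction is routine, provided $\psi$ is bounded below or the negative parts are integrable so the integrals make sense. I would add that assumption implicitly, or note that $c(x,x)>-\infty$ allows shifting.

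\emph{The inequality ``$\ge$''.} Fix $\pi\in\Pi(\mu,\nu)$ with $\int c\,\de\pi<\infty$. We want to select the midpoint $z$ measurably in $(x,y)$. The cleanest route is \Cref{lem:comm_inf-int}, which commutes an infimum with an integral. It gives
\[
\int\inf_{z}\bigl[\psi(x,z)+\psi(y,z)\bigr]\de\pi(x,y)=\inf_{Z}\int\bigl[\psi(x,Z(x,y))+\psi(y,Z(x,y))\bigr]\de\pi(x,y),
\]
where the infimum on the right runs over measurable maps $Z:\bbX\times\bbX\to\calZ$. Its hypotheses should be met thanks to the lower semicontinuity of $z\mapsto\psi(x,z)+\psi(y,z)$ and Polishness of $\calZ$; this is exactly why the l.s.c.\ part of \Cref{prop:tpsd_feat_map} was prepared.

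Given such a $Z$, we construct the competitor for the right-hand side as follows.

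1. Set $\eta=Z_\#\pi$.
2. Set $\gamma_1=(p_1,Z)_\#\pi\in\Pi(\mu,\eta)$ and $\gamma_2=(p_2,Z)_\#\pi\in\Pi(\nu,\eta)$.
3. The integral above then equals $\int\psi\,\de\gamma_1+\int\psi\,\de\gamma_2\ge\OT_\psi(\mu,\eta)+\OT_\psi(\nu,\eta)$.

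Taking the infimum over $Z$ and then over $\pi$ yields ``$\ge$''.

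The main obstacle is this measurable-selection step, i.e.\ verifying the hypotheses of \Cref{lem:comm_inf-int}. That requires joint measurability of $(x,y,z)\mapsto\psi(x,z)+\psi(y,z)$ as a normal integrand, together with lower bounds ensuring integrability. If only $\eps$-optimal selections exist, we work with $\eps$-minimizers and let $\eps\to0$; no attainment of the infimum is needed.
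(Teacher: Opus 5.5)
Your proof is correct and follows essentially the paper's argument. Gluing couplings along their common $\calZ$-marginal gives one inequality, and the measurable selection of \Cref{lem:comm_inf-int} gives the other. The only differences are small streamlinings: you apply \Cref{lem:comm_inf-int} directly on the completed space $(\bbX\times\bbX,\pi)$ rather than passing through the $[0,1]$ reformulation of \Cref{lem:prob-form}, and you glue arbitrary rather than optimal couplings, so existence of optimal plans for $\OT_\psi$ is not needed. Like the paper, you implicitly need $\psi$ to be jointly Borel on $\bbX\times\calZ$; lower semicontinuity of $\psi(x,\cdot)$ in $z$ alone does not provide this, contrary to what you suggest.
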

	\begin{proof}
        The main idea of the proof, similarly to Proposition \ref{prop:direct_proofs_ot}, is to rewrite the r.h.s. of \eqref{eq:ot-explicit} in terms of measurable maps. First, we claim that the r.h.s.\ \eqref{eq:ot-explicit} is equivalent to
        \begin{equation}\label{eq:ot-explicit-3plan}
            \inf_{\substack{\gamma\in \Pi(\mu,\nu,\eta)\\ \eta\in \calP(\calZ)}} \int_{\bbX\times\bbX\times \calZ} \left[ \psi(x,z) + \psi(y,z) \right] \de \gamma(x,y,z),
        \end{equation}
        where $\Pi(\mu,\nu,\eta)$ is the subset of probability measures on $\bbX\times\bbX\times\calZ$ with marginals respectively $\mu,\nu$ and $\eta$.
        Indeed, for any admissible $\gamma$ for \eqref{eq:ot-explicit-3plan}, denoting $\pi_1=(\p_1,\p_3)_\#\gamma$ and $\pi_2=(\p_2,\p_3)_\#\gamma$, we have
		\[
        \begin{aligned}
			\int_{\bbX\times \bbX\times \calZ} \left[\psi(x,z) + \psi(y,z)\right]\de\gamma
			&= \int_{\bbX\times \calZ} \psi(x,z) \de\pi_1(x,z)+ \int_{\bbX\times \calZ} \psi(y,z) \de\pi_2(x,z) \\
			&\ge\OT_{\psi}(\mu,\eta)+\OT_{\psi}(\nu,\eta),
		\end{aligned}
        \]
		so that minimizing first in $\gamma$ and then in $\eta$, we obtain the first inequality.
		On the other hand, given optimal plans $\pi_1$ and $\pi_2$ for $\OT_\psi(\mu,\eta)$ and $\OT_\psi(\nu,\eta)$, which we may disintegrate as $\pi_1=(\pi_1)_z \otimes \eta$ and $\pi_2= (\pi_2)_z \otimes\eta$, we can construct $\overline\gamma \in\Pi(\mu,\nu,\eta)$ as $\overline\gamma=(\pi_1)_z\otimes (\pi_2)_z \otimes \eta$ (see \Cref{proof:chain rule} for details), so that 
		\[
		\begin{aligned}
			\OT_{\psi}(\mu,\eta)+\OT_{\psi}(\nu,\eta)&=\int_{\bbX\times \bbX\times \calZ} \left[\psi(x,z) + \psi(y,z)\right]d\overline \gamma(x,y,z) \\
			&\ge\inf_{\gamma\in \Pi(\mu,\nu,\eta)}
			\int_{\bbX\times \bbX\times \calZ} \left[\psi(x,z) + \psi(y,z)\right]\de\gamma(x,y,z).
		\end{aligned}
		\]
		Minimizing in $\eta$ we obtain the reverse inequality, whence the equality between the r.h.s.\ of \eqref{eq:ot-explicit} and \eqref{eq:ot-explicit-3plan}.
        
        Owing to \eqref{eq:ot-explicit-3plan} and by the same arguments of \Cref{lem:prob-form}, we write then 
		\[
		\begin{aligned}
			\inf_{\eta\in \calP(\calZ)} \OT_{\psi}(\mu,\eta)+\OT_{\psi}(\nu,\eta)
			&=\inf_{\substack{T_1,T_2,T_3: \\ (T_1)_\#\lambda=\mu,(T_2)_\#\lambda=\nu}} \int_{[0,1]}\left[\psi(T_1(w),T_3(w)) +\psi(T_2(w),T_3(w)) \right] \de \lambda(w)
		\end{aligned}
		\]
		where $T_1,T_2:[0,1]\to \bbX$ and $T_3:[0,1]\to \calZ$ are measurable maps, and $T_3$ is not constrained as a consequence of the minimization in $\eta$. At this point, we use \Cref{lem:comm_inf-int} to obtain
		\[
		\begin{aligned}
			\inf_{T_3} \int_{[0,1]} \left[\psi(T_1(w),T_3(w)) +\psi(T_2(w),T_3(w)) \right] \de \lambda(w) &= \int_{[0,1]} \left[ \inf_{z\in \calZ} \psi(T_1(w),z) +\psi(T_2(w),z) \right] \de \lambda(w) \\
			&=\int_{[0,1]} c(T_1(w),T_2(w))\de \lambda(w)
		\end{aligned}
		\]
		and conclude.
		
	\end{proof}
	
	\begin{example}
		If $\bbX=\R^d$ and $c(x,y)=|x-y|^2$ with $|\cdot|$ the Euclidean norm, then $\calZ = \R^d$ and  $\psi(x,z)=2|x-z|^2$ for all $x,z\in\R^d$. In addition,
		\[
		\OT_{\psi}(\mu,\eta)=\inf_{\pi\in\Pi(\mu,\eta)} \int_{\bbX\times \calZ} 2|x-z|^2  \de \pi(x,z)=2 W_2^2(\mu,\eta)
		\]
		and we obtain again the midpoint formula for the geodesic distance $W_2$.
	\end{example}
	
	\section{Debiasing entropic (U)OT for $\eps\in (0,+\infty)$}
	
	In this section, we study the debiasability of $\OT_c^\eps$ for $\eps\in (0,+\infty)$. We recall that by \Cref{lem:OT_implies_UOT} we only have to study the balanced case to deduce the unbalanced one.
	
	\subsection{Preliminaries}
	
	Fix $\eps\in (0,+\infty)$. Canonically for any cost $c: \bbX \times \bbX \to \mathbb{R}\cup\{+\infty\}$, we define 
	\begin{equation}
		\forall x,y\in \bbX, \quad k(x,y) \coloneqq \exp\big( -c(x,y)/\eps\big).
	\end{equation}
	Introducing this kernel is motivated by the last formula for $\OT_c^\eps$ in \eqref{eq:oteps}. Debiasability of $\OT_c^\eps$ can thus be studied either through properties of $c$ or $k$. For any debiasable cost $c$ which has an inf-representation $(\calZ, \psi)$, i.e. such that $\displaystyle c(x,y) = \inf_{z\in \calZ} \psi(x,z) + \psi(y,z)$ for all $x,y\in \bbX$, we have
	\begin{equation}\label{eq:sup-exp_representation_k}
		\forall x,y\in \bbX, \quad  k(x,y)=\sup_{z\in \calZ} e^{-\psi(x,z)/\eps} e^{-\psi(y,z)/\eps}.
	\end{equation}
	Unfortunately having a debiasable ground cost $c$ is merely a necessary condition and no longer sufficient in entropic contexts. It is necessary since $\OT_c^\eps(\delta_x,\delta_y)=c(x,y)$ for all $x,y\in \bbX$. It is not sufficient as seen from the following example.
	\begin{example}\label{eq:counter-example_debiasable}
		Consider $\bbX=\{1,2,3 \}$ and the (debiased) cost
		\begin{equation}\label{eq:counter-example_debiasable_eq}
			C=[c(x,y)]_{x,y\in \bbX}=
			\begin{pmatrix}
				0 & 1 & 0 \\
				1 & 0 & 0 \\
				0 & 0 & 0
			\end{pmatrix}.
		\end{equation}
		Take $\mu=\frac12 \delta_1+\frac12\delta_2$ and $\nu=\delta_3$. Then, the only admissible plan for $(\mu,\nu)$ is the completely independent one, i.e.\ the tensor product measure, $\pi=\mu\otimes\nu$ and $\OT^\eps_{c_{}}(\mu,\nu)=0$ since it costs nothing to send mass from $1,2$ to $3$. On the other hand, $\OT^\eps_{c_{}}(\mu,\mu)>0$. Note that since $c\mapsto \OT^\eps_{c_{}}(\mu,\nu)$ is 1-Lipschitz in $L^\infty$, the example of \eqref{eq:counter-example_debiasable_eq} is not isolated.
	\end{example}
	
	We thus require extra assumptions on $c$ than just being debiasable. Some interesting examples of costs will be related to negative definite kernels and their positive semidefinite counterparts, whose standard definitions and characterizations we recall now:
	\begin{proposition}[\cite{aronszajn50theory} and Prop 3.2 \cite{Berg1984}] \label{prop:def_rep_kernels} 
		Let $k:\bbX\times \bbX \to \R$ be defined over a set $\bbX$. Then the following properties are equivalent
		\begin{enumerate}[labelindent=0cm,leftmargin=*,topsep=0.1cm,partopsep=0cm,parsep=0.1cm,itemsep=0.1cm,label=\roman*)]
			\item\label{it_pos_def} $k$ is positive semidefinite (psd), i.e.\ $k(x,y)=k(y,x)$ and $\displaystyle \sum_{i,j=1}^N a_ia_j k(x_i,x_j) \geq 0$, for all $N\in \mathbb{N}_{>0}$, all $\{x_i\}_{i=1}^N\subset \bbX$ and all $\{a_i\}_{i=1}^N\subset \R$;
			\item\label{it_pos_def_prof} $k$ is a reproducing kernel, i.e.\ there exists a Hilbert space $H_{} $ and a mapping $\phi:\bbX \to H_{}$ such that
			\begin{equation}\label{eq:pos_def}
				k(x,y)=(\phi(x),\phi(y))_{H_{}}.
			\end{equation}
		\end{enumerate}
		Let $c:\bbX\times \bbX \to \R$. Then the following properties are equivalent
		\begin{enumerate}[labelindent=0cm,leftmargin=*,topsep=0.1cm,partopsep=0cm,parsep=0.1cm,itemsep=0.1cm,label=\roman*)]
			\item\label{it_neg_def} $c$ is negative definite, i.e.\ $c(x,y)=c(y,x)$ and $\displaystyle \sum_{i,j=1}^N a_ia_j c(x_i,x_j) \leq 0$, for all $N\in \mathbb{N}_{>0}$, all $\{x_i\}_{i=1}^N\subset \bbX$ and all $\{a_i\}_{i=1}^N\subset \R$ such that $\displaystyle \sum_{i=1}^N a_i=0$;
			\item\label{it_neg_def_distance} there exists a Hilbert space $H_c $, a mapping $\phi:\bbX \to H_c$ and a function $f:\bbX\to \R$ such that
			\begin{equation}\label{eq:neg_def}
				c(x,y)=f(x)+f(y)+|\phi(x)-\phi(y)|^2_{H_c}.
			\end{equation}
			Note that we necessarily have $f(x)=\frac{c(x,x)}{2}$, taking $x=y$ in \eqref{eq:neg_def}.
		\end{enumerate}
	\end{proposition}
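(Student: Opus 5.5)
The statement is classical, due to Aronszajn and Berg--Christensen--Ressel, so the plan is to reconstruct the two standard equivalences. For the psd part, the implication ii) $\Rightarrow$ i) is immediate. Symmetry follows from the symmetry of the real inner product. Moreover
\[
\sum_{i,j} a_ia_j k(x_i,x_j)=\Big|\sum_i a_i\phi(x_i)\Big|_H^2\ge 0 .
\]
For i) $\Rightarrow$ ii) I will use the Moore--Aronszajn construction. On the vector space of finitely supported functions $\sum_i a_i\delta_{x_i}$, define the bilinear form
\[
\Big(\sum_i a_i\delta_{x_i},\sum_j b_j\delta_{y_j}\Big)=\sum_{i,j}a_ib_jk(x_i,y_j).
\]
This form is symmetric and positive semidefinite by assumption. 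I then quotient by its null space $N=\{u:(u,u)=0\}$; Cauchy--Schwarz for semi-inner products shows that $N$ is a subspace and that the form descends to the quotient. Completing the quotient gives $H$. Setting $\phi(x)=[\delta_x]$ yields $k(x,y)=(\phi(x),\phi(y))_H$. The only point needing care is that Cauchy--Schwarz holds for semidefinite forms, which is standard.

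For the negative definite part, the implication ii) $\Rightarrow$ i) is a direct computation. Expanding \eqref{eq:neg_def} under the constraint $\sum a_i=0$, the terms $f(x_i)+f(x_j)$ cancel. The remaining quantity is
\[
\sum_{i,j}a_ia_j\big(|\phi(x_i)|^2+|\phi(x_j)|^2-2(\phi(x_i),\phi(x_j))\big)=-2\Big|\sum_i a_i\phi(x_i)\Big|^2\le 0 .
\]
For i) $\Rightarrow$ ii) I will use Schoenberg's trick. Fix a base point $x_0\in\bbX$ and define
\[
K(x,y)=\tfrac12\big(c(x,x_0)+c(x_0,y)-c(x,y)-c(x_0,x_0)\big).
\]
The key step, which I expect to be the main obstacle, is to show that $K$ is psd. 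Given arbitrary $a_1,\dots,a_N$, I will append the point $x_0$ with coefficient $a_0=-\sum_i a_i$, so that the augmented coefficients sum to zero. Applying the negative definiteness of $c$ to this augmented family, and expanding the quadratic form while carefully tracking the cross terms involving $x_0$, should yield exactly $-2\sum_{i,j}a_ia_jK(x_i,x_j)\le 0$.

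Once $K$ is known to be psd, the first part provides $\phi$ with $K(x,y)=(\phi(x),\phi(y))_{H_c}$. Computing $|\phi(x)-\phi(y)|^2=K(x,x)+K(y,y)-2K(x,y)$ from the definition of $K$, the $x_0$ terms cancel and leave $c(x,y)-\tfrac12 c(x,x)-\tfrac12 c(y,y)$. This is exactly \eqref{eq:neg_def} with $f(x)=c(x,x)/2$.
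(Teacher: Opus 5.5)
Your proof is correct and follows the classical argument the paper relies on. The paper gives no proof of its own and simply cites Aronszajn and Berg--Christensen--Ressel. For the psd part you use the Moore--Aronszajn quotient-and-complete construction, and for the negative definite part you use Schoenberg's base-point kernel $K$, where augmenting with $a_0=-\sum_i a_i$ indeed yields $\sum_{i,j=0}^N a_ia_jc(x_i,x_j)=-2\sum_{i,j=1}^N a_ia_jK(x_i,x_j)$. The only unstated detail is the symmetry of $K$, needed for it to be psd, and it follows at once from the symmetry of $c$.
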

	
	The following captures the key relations between these notions, the first equivalence stemming from \cite[Prop 2.7]{Berg1984}. Given $c:\bbX\times \bbX \to \R$, we have
	\begin{align*}
		\text{$c$ is negative definite as in \eqref{eq:neg_def}} &\Longleftrightarrow \text{$k=e^{-c/\eps}$ is psd for all $\eps>0$ (i.e.\ $e^{-c}$ is indefinitely divisible)} \\
		&\implies \text{$k=e^{-c/\eps}$ is psd for some $\eps>0$  (assumption used in \cite{feydy2019interpolating})} \\ &\implies \text{$c$ is debiasable (\Cref{lem:inf-rep_kernel}).}
	\end{align*}

	\begin{lemma}[Debiasability of log of kernel]\label{lem:inf-rep_kernel} Let $k:\bbX\times \bbX\to \R_{\ge 0}$ be such that $k(x,x) k(y,y) \ge k(x,y)^2$ for all $x,y \in \bbX$. Define for $\eps >0$ and $x,y\in \bbX$,
		\begin{equation}\label{eq:def_c_kernel}
			c(x,y) \coloneqq -\eps \log k(x,y) \in (-\infty, +\infty].
		\end{equation}
		Then $c$ is debiasable. Moreover, $c$ is strictly debiasable if and only if $k(x,x) k(y,y) > k(x,y)^2$ for $x\neq y\in \bbX$.%
	\end{lemma}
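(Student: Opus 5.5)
The plan is to compute the debiased cost $c_0$ directly from Definition~\ref{def:debiasable} and to reduce its sign to the assumed Cauchy--Schwarz-type inequality through monotonicity of the logarithm. For $x,y\in\bbX$ we have formally
\[
c_0(x,y) = -\eps\log k(x,y) + \frac{\eps}{2}\log k(x,x) + \frac{\eps}{2}\log k(y,y) = \frac{\eps}{2}\log\frac{k(x,x)\,k(y,y)}{k(x,y)^2}.
\]
When all quantities are positive, $c_0(x,y)\ge 0$ is therefore equivalent to $k(x,x)k(y,y)\ge k(x,y)^2$, and $c_0(x,y)>0$ is equivalent to strict inequality, since $\eps>0$ and $\log$ is strictly increasing. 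This settles both claims in the generic case. Symmetry of $c$ is implicit in the definition of debiasability, so we either assume $k$ symmetric or note that only the combination $c_0$ enters.

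The real work is handling the value $+\infty$, which occurs when some kernel value vanishes, under the convention $\infty-\infty=+\infty$. We split into the following cases.

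\begin{enumerate}[label=\alph*)]
\item If $k(x,y)=0$, then $c(x,y)=+\infty$. The terms $c(x,x)/2$ and $c(y,y)/2$ are either finite or $+\infty$, and the convention then gives $c_0(x,y)=+\infty>0$.
\item If $k(x,y)>0$, the hypothesis forces $k(x,x)k(y,y)\ge k(x,y)^2>0$. Hence both diagonal values are positive, everything is finite, and the computation above applies.
\end{enumerate}

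For the strict characterization we must match these cases with the condition $k(x,x)k(y,y)>k(x,y)^2$.

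\begin{enumerate}[label=\alph*)]
\item If $k(x,y)=0$, the debiased cost is $c_0=+\infty>0$. We would like the kernel inequality to be strict as well, which holds as soon as $k(x,x)k(y,y)>0$. The degenerate subcase $k(x,y)=0=k(x,x)k(y,y)$ is where the equivalence could fail: here $c_0=+\infty$ is strictly positive, yet $0>0$ is false. This is the main obstacle. I would resolve it by reading the convention $\infty-\infty=+\infty$ carefully and, if needed, note that at $x=y$ the stated strict condition is only required for $x\neq y$. Alternatively, I would restrict to kernels with positive diagonal, or interpret $k(x,x)k(y,y)>k(x,y)^2$ in the extended sense appropriate to this subcase.
\item If $k(x,y)>0$, the equivalence follows from strict monotonicity of $\log$, exactly as in the generic computation.
\end{enumerate}
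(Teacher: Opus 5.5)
Your argument for plain debiasability is correct and matches the paper's: monotonicity of $\log$, with the case $k(x,y)=0$ trivial because then $c(x,y)=+\infty$. The gap is in the strict characterization, exactly in the subcase you flag: $x\neq y$, $k(x,y)=0$ and $k(x,x)k(y,y)=0$. None of your suggested resolutions closes it. Reading the convention $\infty-\infty=+\infty$ carefully only confirms $c_0(x,y)=+\infty>0$. The remark about $x=y$ is beside the point, since the subcase occurs with $x\neq y$. There is no ``extended sense'' in which $0>0$ holds, and assuming a positive diagonal changes the statement. In fact, with strictness defined through $c_0$ and that convention, the ``only if'' direction is false. On $\bbX=\{1,2\}$ take $k(1,1)=1$ and $k(2,2)=k(1,2)=k(2,1)=0$, or simply $k\equiv 0$ (both choices are even psd). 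The hypothesis holds and $c_0(1,2)=+\infty>0$, yet $k(1,1)k(2,2)=0=k(1,2)^2$.

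The missing idea is to never form the difference $c_0$, and instead to compare additively in $(-\infty,+\infty]$. This is how strict debiasability is written in \Cref{corr:strict_tpsd}, namely $c(x,x)+c(y,y)<c(x,y)+c(y,x)$, and it is how the paper argues. The map $-\log$ is a strictly decreasing bijection from $[0,+\infty)$ onto $(-\infty,+\infty]$, and $-\log(ab)=-\log a-\log b$ holds there without any $\infty-\infty$ ambiguity. Hence for all $x,y\in\bbX$
\[
2c(x,y)\ge c(x,x)+c(y,y)\ \Leftrightarrow\ -\eps\log\bigl(k(x,y)^2\bigr)\ge -\eps\log\bigl(k(x,x)k(y,y)\bigr)\ \Leftrightarrow\ k(x,x)k(y,y)\ge k(x,y)^2,
\]
and the same chain holds with strict inequalities, with no case split at all. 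In your degenerate subcase both sides of $2c(x,y)>c(x,x)+c(y,y)$ equal $+\infty$. So $c$ is not strictly debiasable in this sense, consistent with $0>0$ failing. For the non-strict notion the additive and $c_0$ formulations coincide (if $c(x,y)$ is finite and a diagonal term is infinite, both fail), so your first part stands. For the strict part you must either adopt the additive reading, or add the assumption $k(x,x)>0$ for all $x$, under which your case analysis goes through.
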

	In particular, if $k$ is psd, the assumed inequality is precisely Cauchy-Schwarz owing to \eqref{eq:pos_def}.
	\begin{proof} The following equivalences hold for all $x,y\in \bbX$:
		\begin{align*}
			2c(x,y) \ge c(x,x)+c(y,y) &\Leftrightarrow -2 \log k(x,y) \ge - \log(k(x,x)k(y,y)) \\
			&\Leftrightarrow k(x,x) k(y,y) \ge k(x,y)^2. %
		\end{align*}%
		Hence $c$ is debiasable. The strict case is a consequence of the same equivalences studied with strict inequalities for $x\neq y \in \bbX$.%
	\end{proof}
	\begin{remark}
		The issue with \eqref{eq:sup-exp_representation_k} is that it holds for any debiasable $c$ and $k=e^{-c/\eps}$, even for $k$ not a reproducing kernel. If we had instead of the $\sup$ in \eqref{eq:sup-exp_representation_k} an integral representation as in $k(x,y)=\int_{z\in \calZ} e^{-\psi(x,z)} e^{-\psi(y,z)} \de\lambda(z)$ with $\lambda\in\calM_+(\calZ)$ a nonnegative measure, then the kernel would be completely positive, i.e.\ a $L^2$-product of two vectors with nonnegative components. Unfortunately in general, for reproducing kernels $k$, we only have a Mercer-like expression $k(x,y)=\int_{z\in \calZ} \rho(x,z)\rho(y,z) \de\lambda(z)$ with $\rho$ having possibly negative values.%
	\end{remark}
	
	In the coming sections, we will gradually weaken our assumptions, going from costs that are translations of their log-sum-exp approximation (\Cref{sec:EOT_lse_gauss}), to negative definite kernels (\Cref{sec:EOT_neg_kernel}), to conclude with continuous and bounded reproducing kernels (\Cref{sec:EOT_rk_compact}).
	
	\subsection{Log-sum-exp approximation and Gaussian/Euclidean-like settings}\label{sec:EOT_lse_gauss}
	
	Actually, it is possible to build a log-sum-exp approximation of any debiasable $c$ to achieve an entropic cost that is debiasable. More precisely, for any measurable function $\psi:\bbX\times\calZ\to\R\cup\{+\infty\}$ and nonnegative measure $\lambda\in\calM_+(\calZ)$ such that $\int_{\calZ} e^{-\frac{2\psi(x,z)}{\eps}}\de\lambda(z)<+\infty$ for any $x\in\bbX$, consider the log-sum-exp cost and kernel functions defined for all $x,y\in \bbX$ as
	\begin{gather}\label{eq:lse-cost}
		c_{\eps,\lambda}(x,y) \coloneqq -\eps \log \int_{\calZ} e^{-\frac{\psi(x,z)+\psi(y,z)}{\eps}}\de\lambda(z),\\
		k_{\eps,\lambda}(x,y) \coloneqq e^{-\frac{c_{\eps,\lambda}(x,y)}{\eps}} = \int_{\calZ} e^{-\frac{\psi(x,z)+\psi(y,z)}{\eps}}\de\lambda(z). \label{eq:lse-kernel}
	\end{gather}
	The values of the cost $c_{\eps,\lambda}$ always belong to $(-\infty,+\infty]$ since by the Cauchy-Schwarz inequality
	\[
	\int_{\calZ} e^{-\frac{\psi(x,z)+\psi(y,z)}{\eps}}\de\lambda(z)\le \left(\int_{\calZ} e^{-\frac{2\psi(x,z)}{\eps}}\de\lambda(z)\right)^\frac12\left(\int_{\calZ} e^{-\frac{2\psi(y,z)}{\eps}}\de\lambda(z)\right)^\frac12<+\infty.
	\]
	On the one hand, for $\lambda\in \calP(\cal \calZ)$ a probability measure, it is immediate that $c_{\eps,\lambda} \ge c$. This can be seen either because $c_{\eps,\lambda}$ is a soft-min or by bounding the content of the integral. On the other hand, $k_{\eps,\lambda}$ is always a reproducing kernel as a positive sum of rank-one kernels. This construction is naturally very similar to Laplace's principle, the difference being that we want an exact result and not an asymptotic one, and that asymptotics would even not be enough to cover a larger set of $c$, such as the debiasable ones, as already seen in \Cref{eq:counter-example_debiasable}.
	
	\begin{lemma}
		For all $\eps>0$ and $\lambda\in\calM_+(\calZ)$, the cost function $c_{\eps,\lambda}$ is debiasable. Moreover, if for all $x\neq y\in\bbX$, $\psi(x,\cdot)-\psi(y,\cdot)$ is not a constant function $\lambda$-a.e., then $c_{\eps,\lambda}$  is strictly debiasable.
	\end{lemma}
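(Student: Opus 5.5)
The plan is to reduce debiasability of $c_{\eps,\lambda}$ to the Cauchy--Schwarz inequality for the kernel $k_{\eps,\lambda}$, and then to invoke \Cref{lem:inf-rep_kernel}. First, $k_{\eps,\lambda}(x,y)=\int_{\calZ} e^{-\psi(x,z)/\eps}e^{-\psi(y,z)/\eps}\de\lambda(z)$ is exactly the $L^2(\lambda)$ inner product $(\phi(x),\phi(y))_{L^2(\lambda)}$, with feature map $\phi(x)\coloneqq e^{-\psi(x,\cdot)/\eps}$. By the integrability assumption, $\phi(x)\in L^2(\lambda)$, and $\phi(x)\ge 0$. Hence $k_{\eps,\lambda}\ge 0$ is psd by \Cref{prop:def_rep_kernels}, and the Cauchy--Schwarz inequality gives $k_{\eps,\lambda}(x,y)^2\le k_{\eps,\lambda}(x,x)k_{\eps,\lambda}(y,y)$. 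Applying \Cref{lem:inf-rep_kernel} to $k=k_{\eps,\lambda}$ then yields that $c_{\eps,\lambda}=-\eps\log k_{\eps,\lambda}$ is debiasable.

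Before that, I would check the conventions for infinite values. If $k_{\eps,\lambda}(x,x)=0$, then $c_{\eps,\lambda}(x,x)=+\infty$. Cauchy--Schwarz then forces $k_{\eps,\lambda}(x,y)=0$, so $c_{\eps,\lambda}(x,y)=+\infty$, and the rule $\infty-\infty=+\infty$ gives $c_0(x,y)=+\infty\ge 0$. This case is already covered by the equivalences in \Cref{lem:inf-rep_kernel}, read in $[0,+\infty)$ with $\log 0=-\infty$, but I would state it explicitly.

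For strict debiasability, \Cref{lem:inf-rep_kernel} shows it suffices to prove strict Cauchy--Schwarz, $k_{\eps,\lambda}(x,y)^2<k_{\eps,\lambda}(x,x)k_{\eps,\lambda}(y,y)$, for $x\neq y$. Equality in Cauchy--Schwarz in $L^2(\lambda)$ holds iff $\phi(x)$ and $\phi(y)$ are linearly dependent $\lambda$-a.e. Suppose both are nonzero in $L^2(\lambda)$. Since they are nonnegative, dependence means $\phi(x)=a\,\phi(y)$ $\lambda$-a.e. for some $a>0$. Taking logarithms, $\psi(x,\cdot)-\psi(y,\cdot)=-\eps\log a$ is constant $\lambda$-a.e. on the set where both are finite, which the hypothesis excludes.

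The main obstacle is exactly this degenerate situation: where $\psi=+\infty$, and where $\phi(x)=0$ in $L^2(\lambda)$. On the common set $\{\psi(x,\cdot)=\psi(y,\cdot)=+\infty\}$, the difference $\psi(x,\cdot)-\psi(y,\cdot)$ is undefined. Outside it, dependence forces the finiteness sets to agree $\lambda$-a.e. So I would interpret the hypothesis as a statement about $\phi(x)/\phi(y)$ off the common zero set, and argue that dependence with $a>0$ contradicts it.

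If instead $\phi(x)=0$ $\lambda$-a.e., then $c_{\eps,\lambda}(x,x)=+\infty$ and $c_{\eps,\lambda}(x,y)=+\infty$, so the debiased value is $+\infty>0$ by the convention of \Cref{def:debiasable}. Strictness therefore still holds in this case, and the remaining case is the nonzero one handled above.
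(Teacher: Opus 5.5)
Your proof is correct and takes essentially the paper's route. The paper also applies Cauchy--Schwarz in $L^2(\lambda)$ to $e^{-\psi(x,\cdot)/\eps}$ and $e^{-\psi(y,\cdot)/\eps}$, writes $2c_{\eps,\lambda}(x,y)-c_{\eps,\lambda}(x,x)-c_{\eps,\lambda}(y,y)$ as $\eps\log$ of the Cauchy--Schwarz ratio, and reads strictness off the equality case; routing this through \Cref{lem:inf-rep_kernel} is the same computation. Your explicit treatment of the degenerate cases is more careful than the paper's one-line equality condition: the vanishing feature $\phi(x)=0$, and the common set where $\psi(x,\cdot)=\psi(y,\cdot)=+\infty$, where the hypothesis must be read as ``no $C\in\R$ with $\psi(x,\cdot)=\psi(y,\cdot)+C$ $\lambda$-a.e.''. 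That reading is indeed the one under which the statement holds.
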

	\begin{proof}
		It holds, using again the Cauchy-Schwarz inequality and its conditions for equality,
		\[
		2 c_{\eps,\lambda}(x,y)-c_{\eps,\lambda}(x,x)-c_{\eps,\lambda}(y,y)=\eps\log\left( \frac{\int_\calZ e^{-\frac{2\psi(x,z)}{\eps}}\de\lambda(z)\int_\calZ e^{-\frac{2\psi(y,z)}{\eps}}\de\lambda(z)}{\left(\int_\calZ e^{-\frac{\psi(x,z)+\psi(y,z)}{\eps}}\de\lambda(z)\right)^2}\right)\ge 0
		\]
		with equality if and only if there exists $\alpha,\beta\ge0$ such that $\alpha e^{-\frac{2\psi(x,z)}{\eps}}=\beta e^{-\frac{2\psi(y,z)}{\eps}}$ for $\lambda$-a.e. $z\in\calZ$.
	\end{proof}
	
	The cost $c_{\eps,\lambda}$ is therefore a natural candidate to provide a debiasable entropic optimal transport cost.
	
	\begin{theorem}[Entropic optimal transport for log-sum-exp]\label{thm:EOT_debiased}
		Let $\bbX$ be a Polish space and $c: \bbX \times \bbX \to \mathbb{R}\cup\{+\infty\}$ a debiasable cost. Let $(\cal \calZ, \psi)$ be an inf-representation of $c$ and assume that $\calZ$ is also a Polish space and $\psi$ a measurable function. Let $\lambda \in \calM_+(\calZ)$, $\eps >0$ and $c_{\eps,\lambda}$ be defined by \eqref{eq:lse-cost}.
		Then, the entropic (unbalanced) optimal transport cost $\OT^\eps_{c_{\eps,\lambda}}$ is debiasable. Moreover,
		\begin{equation}\label{eq:eot_c_eps_lambda}
			\OT_{c_{\eps,\lambda}}^\eps(\mu,\nu)=\inf_{\eta\in\calP(\calZ)}  \OT^\eps_{\psi}(\mu,\eta)+\OT^\eps_{\psi}(\nu,\eta)+ \eps\,\KL(\eta|\lambda).%
		\end{equation}
		In addition, if $\lambda\in \calP(\cal \calZ)$ is a probability measure, it holds that $\OT_{c_{\eps,\lambda}}^\eps \ge \OT^\eps_c$.
		
	\end{theorem}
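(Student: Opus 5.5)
The plan is to prove the factorization \eqref{eq:eot_c_eps_lambda} first. Debiasability then follows from \Cref{prop:tpsd_feat_map}, because \eqref{eq:eot_c_eps_lambda} is an inf-representation of $\OT^\eps_{c_{\eps,\lambda}}$ with set $\calP(\calZ)$ and function $\Psi(\mu,\eta)=\OT^\eps_\psi(\mu,\eta)+\frac{\eps}{2}\KL(\eta|\lambda)$. The unbalanced case then follows from \Cref{lem:OT_implies_UOT}.

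\textbf{Step 1: lift the right-hand side to three-marginal plans.} Mimicking the proof of \Cref{fact:ot}, I rewrite the right-hand side of \eqref{eq:eot_c_eps_lambda} as an infimum over $\gamma\in\calP(\bbX\times\bbX\times\calZ)$ whose $(x,y)$-marginals are $\mu,\nu$. The objective is
\[
\int \big[\psi(x,z)+\psi(y,z)\big]\de\gamma+\eps\KL\big((\p_1,\p_3)_\#\gamma\,|\,\mu\otimes\eta\big)+\eps\KL\big((\p_2,\p_3)_\#\gamma\,|\,\nu\otimes\eta\big)+\eps\KL(\eta|\lambda),
\]
where $\eta=(\p_3)_\#\gamma$. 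For the inequality ``$\le$'' I take optimal (or $\delta$-optimal) plans $\pi_1,\pi_2$, disintegrate them w.r.t.\ $\eta$, and glue them conditionally independently as $\bar\gamma=(\pi_1)_z\otimes(\pi_2)_z\otimes\eta$. For ``$\ge$'' I project $\gamma$ onto its two pair-marginals. The chain rule for $\KL$ (\Cref{proof:chain rule}) turns the sum of entropy terms into one term. Set $\pi=(\p_1,\p_2)_\#\gamma$ and write $\gamma=\gamma_{x,y}\otimes\pi$. Then
\[
\KL\big(\gamma\,|\,\mu\otimes\nu\otimes\lambda\big)\le \KL(\pi_1|\mu\otimes\eta)+\KL(\pi_2|\nu\otimes\eta)+\KL(\eta|\lambda),
\]
with equality exactly when $x$ and $y$ are conditionally independent given $z$. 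So the glued $\bar\gamma$ realizes equality, while a general $\gamma$ only gives the inequality in the right direction. Hence the right-hand side equals
\[
\inf_{\gamma:\,(\p_1)_\#\gamma=\mu,\,(\p_2)_\#\gamma=\nu}\ \eps\KL\Big(\gamma\,\Big|\,e^{-(\psi(x,z)+\psi(y,z))/\eps}\,\mu\otimes\nu\otimes\lambda\Big).
\]

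\textbf{Step 2: minimize over the $z$-fiber.} I decompose $\gamma=\gamma_{x,y}\otimes\pi$ with $\pi\in\Pi(\mu,\nu)$ and use the chain rule again. The relative entropy splits into $\KL\big(\pi\,|\,k_{\eps,\lambda}\,\mu\otimes\nu\big)$ plus $\int \KL\big(\gamma_{x,y}\,|\,\rho_{x,y}\big)\de\pi$. Here $k_{\eps,\lambda}$ is the $z$-marginal normalization constant \eqref{eq:lse-kernel}, and $\rho_{x,y}\propto e^{-(\psi(x,z)+\psi(y,z))/\eps}\lambda$ is the normalized Gibbs measure. The integrated term vanishes exactly when $\gamma_{x,y}=\rho_{x,y}$, a measurable choice since $\psi$ is measurable. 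The remaining term is $\eps\KL(\pi|e^{-c_{\eps,\lambda}/\eps}\mu\otimes\nu)$, which is exactly $\OT^\eps_{c_{\eps,\lambda}}(\mu,\nu)$ by \eqref{eq:oteps}.

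\textbf{Step 3: comparison with $\OT^\eps_c$.} If $\lambda$ is a probability measure, then $c_{\eps,\lambda}\ge c$, since it is a soft-min of $\psi(x,\cdot)+\psi(y,\cdot)$ against a probability measure. Then $\OT^\eps_{c_{\eps,\lambda}}\ge\OT^\eps_c$ follows by monotonicity of \eqref{eq:oteps} in the cost.

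\textbf{Main obstacle.} The delicate part is Step 1: making the gluing and chain-rule manipulations rigorous when $\KL$ or $\psi$ take infinite values. This requires disintegrations on Polish spaces, handling the case where optimal entropic plans do not exist (I would use $\delta$-minimizers), and checking that infinite values do not create an $\infty-\infty$ ambiguity. If $\psi$ is unbounded below, I would first restrict to couplings with finite objective.
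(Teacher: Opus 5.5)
Your route is the paper's own: lift to three-marginal plans with gluing, merge the entropies into $\KL(\gamma|\mu\otimes\nu\otimes\lambda)$, disintegrate along $\pi=(\p_1,\p_2)_\#\gamma$ and minimize fiberwise to the Gibbs kernel, then conclude with \Cref{prop:tpsd_feat_map} and \Cref{lem:OT_implies_UOT}. Steps 2 and 3 are fine. However, the central inequality of Step 1 is written backwards. The right disintegration here is along $z$ (not along $\pi$), and \Cref{lem:chain_rule_KL} with \Cref{lem:KL-decomp2} then gives
\[
\KL(\gamma|\mu\otimes\nu\otimes\lambda)=\KL(\pi_1|\mu\otimes\eta)+\KL(\pi_2|\nu\otimes\eta)+\KL(\eta|\lambda)+\int_\calZ \KL\big(\gamma_z\,\big|\,(\p_1)_\#\gamma_z\otimes(\p_2)_\#\gamma_z\big)\,\de\eta(z).
\]
The last term is the conditional mutual information of $x$ and $y$ given $z$, and it is nonnegative. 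So the joint entropy dominates the sum of the three entropies, with equality exactly under conditional independence. Your equality remark is right; the inequality sign is not.

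Read literally, this leaves half of \eqref{eq:eot_c_eps_lambda} unproven. Let $A(\gamma)$ be your Step 1 objective and set $B(\gamma)\coloneqq\int[\psi(x,z)+\psi(y,z)]\de\gamma+\eps\KL(\gamma|\mu\otimes\nu\otimes\lambda)$. You show that $\inf A$ is the r.h.s.\ of \eqref{eq:eot_c_eps_lambda}, and Step 2 shows $\inf B=\OT^\eps_{c_{\eps,\lambda}}(\mu,\nu)$.

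With your inequality $B\le A$, both of your arguments give only $\inf B\le\inf A$, i.e.\ $\OT^\eps_{c_{\eps,\lambda}}\le$ r.h.s. This holds for the general-$\gamma$ argument and for the gluing argument, since gluing gives $B(\bar\gamma)=A(\bar\gamma)=A(\gamma)$.

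The reverse bound $\OT^\eps_{c_{\eps,\lambda}}(\mu,\nu)\ge\inf_{\eta}\OT^\eps_\psi(\mu,\eta)+\OT^\eps_\psi(\nu,\eta)+\eps\KL(\eta|\lambda)$ is never obtained. It is exactly the bound that needs $A\le B$. For every admissible $\gamma$, project onto $(\pi_1,\pi_2,\eta)$ and drop the mutual-information term, as in \eqref{eq:KL-ineq-txt}; this gives $B(\gamma)\ge A(\gamma)\ge$ r.h.s. Once the sign is corrected, your proof coincides with the paper's.
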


	\begin{proof}\label{proof:EOT_debiased}%
        Similarly to \Cref{fact:ot}, and using results from \Cref{proof:chain rule} on the chain rule for the entropy, we first show that that we can equivalently write the r.h.s.\ of \eqref{eq:eot_c_eps_lambda} as
		\begin{equation}\label{eq:OTeps-decomp-psi-gamma}
			\inf_{\substack{\eta \in \mathcal P(\calZ) \\ \gamma\in \Pi(\mu,\nu,\eta)}}
			\int_{\bbX\times \bbX\times \calZ} \left[\psi(x,z) + \psi(y,z)\right]\de\gamma(x,y,z) + \eps \KL(\gamma |\mu\otimes\nu\otimes\lambda),
		\end{equation}
        We recall that $\Pi(\mu,\nu,\eta)$ denotes the subset of probability measures on $\bbX \times \bbX \times \calZ$ with marginals $\mu$, $\nu$ and $\eta$ respectively.
		Indeed, for any admissible $\gamma$ for \eqref{eq:OTeps-decomp-psi-gamma}, denoting $\pi_1=(\p_1,\p_3)_\#\gamma$, $\pi_2=(\p_2,\p_3)_\#\gamma$ and using
		\begin{equation}\label{eq:KL-ineq-txt}
			\KL(\pi_1|\mu\otimes\eta)+\KL(\pi_2|\nu\otimes\eta)+\KL(\eta|\lambda)\le\KL(\gamma|\mu\otimes\nu\otimes\lambda)
		\end{equation}
		corresponding to \eqref{eq:KL-decomp2-ineq} in Lemma \ref{lem:KL-decomp2}, we have
		\begin{multline*}
			\int_{\bbX\times \bbX\times \calZ} \left[\psi(x,z) + \psi(y,z)\right]\de\gamma + \eps \KL(\gamma |\mu\otimes\nu\otimes\lambda) \\
			\begin{aligned}
				&\ge \int_{\bbX\times \calZ} \psi(x,z) \de\pi_1(x,z)+ \int_{\bbX\times \calZ} \psi(y,z) \de\pi_2(x,z) + \eps \KL(\pi_1|\mu\otimes\eta)+\eps\KL(\pi_2|\nu\otimes\eta)+\eps\KL(\eta|\lambda) \\
				&\ge\OT^\eps_{\psi}(\mu,\eta)+\OT^\eps_{\psi}(\nu,\eta)+ \eps\,\KL(\eta|\lambda).
			\end{aligned}
		\end{multline*}
		Taking the infimum on the l.h.s. with respect to $\gamma$ and minimizing then in $\eta$, we obtain that \eqref{eq:OTeps-decomp-psi-gamma} is greater than the r.h.s. in \eqref{eq:eot_c_eps_lambda}.
		On the other hand, given optimal plans $\pi_1$ and $\pi_2$ for $\OT^\eps_\psi(\mu,\eta)$ and $\OT^\eps_\psi(\nu,\eta)$, which we may disintegrate as $\pi_1=(\pi_1)_z\otimes\eta$ and $\pi_2= (\pi_2)_z \otimes \eta$, we can construct $\overline\gamma \in\Pi(\mu,\nu,\eta)$ as $\overline\gamma(x,y,z)=(\pi_1)_z\otimes (\pi_2)_z \eta$ (see \Cref{proof:chain rule} for details) for which \eqref{eq:KL-ineq-txt} is in fact an equality (\eqref{eq:KL-decomp2-gen} in Lemma \ref{lem:KL-decomp2}) so that 
		\[
		\begin{aligned}
			\OT^\eps_{\psi}(\mu,\eta)+\OT^\eps_{\psi}(\nu,\eta)+ \eps\,\KL(\eta|\lambda)&=\int_{\bbX\times \bbX\times \calZ} \left[\psi(x,z) + \psi(y,z)\right]d\overline \gamma(x,y,z) + \eps \KL(\overline\gamma |\mu\otimes\nu\otimes\lambda) \\
			&\ge\inf_{\gamma\in \Pi(\mu,\nu,\eta)}
			\int_{\bbX\times \bbX\times \calZ} \left[\psi(x,z) + \psi(y,z)\right]\de\gamma(x,y,z) + \eps \KL(\gamma |\mu\otimes\nu\otimes\lambda).
		\end{aligned}
		\]
		Minimizing in $\eta$, we obtain the reverse inequality. Hence \eqref{eq:OTeps-decomp-psi-gamma} and the r.h.s. in \eqref{eq:eot_c_eps_lambda} coincide.
		
		We now show that \eqref{eq:eot_c_eps_lambda} holds, using the expression \eqref{eq:OTeps-decomp-psi-gamma}. For any $\eta\in\calP(\calZ)$ and $\gamma\in\Pi(\mu,\nu,\eta)$, denoting $\pi=(\p_1,\p_2)_\#\gamma$ and using the chain rule \eqref{eq:chain-KL},
		\begin{multline*}
			\int_{\bbX\times \bbX\times \calZ} \left[\psi(x,z) + \psi(y,z)\right]\de\gamma + \eps \KL(\gamma |\mu\otimes\nu\otimes\lambda)=\\
			\int_{\bbX\times \bbX}\left[\int_\calZ \left[\psi(x,z) + \psi(y,z)+\eps\log\left({\textstyle\frac{\de\gamma_{x,y}}{\de\lambda}}\right)\right]\de\gamma_{x,y}(z) \right]\de\pi(x,y)+\eps\KL(\pi|\mu\otimes\nu)
		\end{multline*}
		where $(\gamma_{x,y})_{(x,y)\in \bbX\times\bbX}$ is the disintegration of $\gamma$ with respect to the marginal $\pi$, i.e. $\gamma=\gamma_{x,y}\otimes \pi$ (see \Cref{proof:chain rule} for details). Thus, problem \eqref{eq:OTeps-decomp-psi-gamma} and hence \eqref{eq:eot_c_eps_lambda} can be rewritten as
		\begin{equation*}
			\inf_{\pi\in\Pi(\mu,\nu)}\inf_{(\rho_{x,y})_{(x,y)}} \int_{\bbX\times \bbX}\left[\int_\calZ \left[\psi(x,z)+\psi(y,z)+\eps\log\left({\textstyle\frac{\de\rho_{x,y}}{\de\lambda}}\right)\right]\de\rho_{x,y}(z) \right]\de\pi(x,y)+\eps\KL(\pi|\mu\otimes\nu)
		\end{equation*}
		for a measurable family of probability kernels $(\rho_{x,y})_{(x,y)\in\bbX\times\bbX} \subset \calP(\bbX)$. Minimizing with respect to $\rho_{x,y}$ provides the Gibbs measure 
		\[
		\rho^*_{x,y}(z)=\frac{1}{A_{x,y}} e^{\frac{-\psi(x,z)-\psi(y,z)}{\eps}} \lambda(z),
		\]
		with $A_{x,y}=\int_\calZ e^{\frac{-\psi(x,z)-\psi(y,z)}{\eps}}\de \lambda(z)$ the normalizing constant. Moreover we have
		\begin{multline*}
			\int_{\bbX\times \bbX}\left[\int_\calZ \left[\psi(x,z)+\psi(y,z)+\eps\log\left({\textstyle\frac{\de\rho^*_{x,y}}{\de\lambda}}\right)\right]\de\rho^*_{x,y}(z) \right]\de\pi(x,y)\\
			=-\eps\int_{\bbX\times \bbX} \left[\log\int_\calZ e^{-\frac{\psi(x,z)+\psi(y,z)}{\eps}}\de\lambda(z)\right] \de\pi(x,y) \stackrel{\text{def \eqref{eq:lse-cost}}}{=}\int_{\bbX\times \bbX} c_{\eps,\lambda}(x,y) \de\pi(x,y).%
		\end{multline*}
		Hence the infimum over $\pi$ does recover $ \OT_{c_{\eps,\lambda}}^\eps(\mu,\nu)$, and \eqref{eq:eot_c_eps_lambda} holds.
	\end{proof}
	
	Let us discuss now the strict debiasability of the cost $\OT^\eps_{c_{\eps,\lambda}}$.
	We start by recalling some known facts about the entropic optimal transport problem, which will be also useful later on. For a Polish space $\bbX$ and any $\mu,\nu\in\calP(\bbX)$, there always exists a unique optimal plan $\pi\in \Pi(\mu,\nu)$ achieving the optimum in \eqref{eq:oteps} \cite[Theorem 1.10]{Nutz2021IntroductionTE} and there exists functions $f\in L^1(\bbX,\mu),g\in L^1(\bbX,\nu)$ such that this has the form \cite[Corollary 2.5 and Theorem 3.2]{Nutz2021IntroductionTE}
	\begin{equation}\label{eq:opti-pi-charcterization}
		\pi(x,y)=e^{\frac{-c(x,y)+f(x)+g(y)}{\eps}}\mu\otimes\nu (x,y).
	\end{equation}
	The functions $f,g$, which are called the optimal (entropic) dual potentials, satisfy the Sinkhorn's fixed-point equations, a.k.a.\ the Schrödinger system:
	\begin{equation}\label{eq:sinkhorn-fixed-point}
		\begin{aligned}
			f(x)=-\eps\log\left(\int_\bbX e^{\frac{g(y)-c(x,y)}{\eps}}\de\nu(y)\right),\\
			g(y)=-\eps\log\left(\int_\bbX e^{\frac{f(x)-c(x,y)}{\eps}}\de\mu(x)\right).
		\end{aligned}
	\end{equation}
	The dual potentials are usually defined respectively $\mu$ and $\nu$ almost everywhere, but one can take the above equations as a pointwise definition. Moreover, they can be chosen so that $\int f\de\mu,\int g\de\nu \ge0$. See \cite[Section 4]{Nutz2021IntroductionTE} for more details. In the following, we will always work in this setting.
	Finally, if $\mu=\nu$, one can take $f=g$ by symmetry.
	
	\begin{corollary}[Strict debiasability of $\OT^\eps_{c_{\eps,\lambda}}$]\label{cor:strict_deb_lse}
		With the assumptions of Theorem \ref{thm:EOT_debiased}, suppose further
		that for any $\mu,\nu\in\calP(\bbX)$ a minimizer $\eta$ in \eqref{eq:eot_c_eps_lambda} exists and that, for $L^0$ the Borel-measurable functions, the integral operator $T_\psi:\calM_+(\bbX)\to L^0(\calZ,\lambda)$ defined by
		$(T_\psi\xi)(z)\coloneqq \int_\bbX e^{-\psi(x,z)/\eps}\,\de\xi(x)$
		is injective on $\calM_+(\bbX)\setminus\{0\}$. Then $\OT^\eps_{c_{\eps,\lambda}}$ is strictly debiasable.
	\end{corollary}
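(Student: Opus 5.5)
Debiasability itself comes from \Cref{thm:EOT_debiased} through the inf-representation \eqref{eq:eot_c_eps_lambda}. So I would apply \Cref{corr:strict_tpsd} to the representation $\Psi(\mu,\eta)=\OT^\eps_\psi(\mu,\eta)+\tfrac{\eps}{2}\KL(\eta|\lambda)$ over $\calZ=\calP(\calZ)$, for which $\OT^\eps_{c_{\eps,\lambda}}(\mu,\nu)=\inf_\eta \Psi(\mu,\eta)+\Psi(\nu,\eta)$. The assumed existence of minimizers gives $\calZ_{\mu,\nu}\neq\emptyset$. It therefore suffices to show that for $\mu\neq\nu$ no $\eta$ can minimize both $\eta\mapsto 2\Psi(\mu,\eta)$ and $\eta\mapsto 2\Psi(\nu,\eta)$. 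Equivalently, the minimizer $\eta_\mu$ of $\OT^\eps_\psi(\mu,\cdot)+\tfrac\eps2\KL(\cdot|\lambda)$ must determine $\mu$.

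\textbf{Step 1: a first-order condition for $\eta_\mu$.} The map $\eta\mapsto \OT^\eps_\psi(\mu,\eta)+\frac\eps2\KL(\eta|\lambda)$ is convex. Let $(f,g)$ be the Schrödinger potentials of $\OT^\eps_\psi(\mu,\eta_\mu)$ from \eqref{eq:opti-pi-charcterization}--\eqref{eq:sinkhorn-fixed-point}, with $g$ defined on $\calZ$. The directional derivative of $\OT^\eps_\psi(\mu,\cdot)$ at $\eta_\mu$ is given by $g$, up to a constant. Stationarity then forces $g+\frac\eps2\log\frac{\de\eta_\mu}{\de\lambda}=\mathrm{const}$, that is, $\frac{\de\eta_\mu}{\de\lambda}\propto e^{-2g/\eps}$. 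I would derive this cleanly without differentiating OT: perturb along $\eta_t=(1-t)\eta_\mu+t\eta'$, and bound $\OT^\eps_\psi(\mu,\eta_t)$ from above using the dual $\OT^\eps_\psi(\mu,\eta)\ge\int f\de\mu+\int g\de\eta$ with equality at $\eta_\mu$, together with the semi-dual/concavity structure. Alternatively, one can use the joint formulation \eqref{eq:OTeps-decomp-psi-gamma} with $\mu=\nu$ and directly optimize the $z$-marginal of the Gibbs structure.

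\textbf{Step 2: inverting to recover $\mu$.} The Schrödinger system gives $e^{-g(z)/\eps}=\int_\bbX e^{(f(x)-\psi(x,z))/\eps}\de\mu(x)=(T_\psi\xi_\mu)(z)$, where $\xi_\mu\coloneqq e^{f/\eps}\mu\in\calM_+(\bbX)\setminus\{0\}$. Hence $\frac{\de\eta_\mu}{\de\lambda}=C_\mu\,(T_\psi\xi_\mu)^2$. Now suppose $\eta_\mu=\eta_\nu$. Since $T_\psi\xi\ge0$, taking square roots yields $T_\psi\xi_\mu=\sqrt{C_\nu/C_\mu}\,T_\psi\xi_\nu=T_\psi(a\xi_\nu)$ $\lambda$-a.e., with $a>0$. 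Injectivity then gives $e^{f_\mu/\eps}\mu=a e^{f_\nu/\eps}\nu$, so $\mu$ and $\nu$ are mutually absolutely continuous.

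\textbf{Step 3: concluding $\mu=\nu$.} Both plans $\pi_\mu$ and $\pi_\nu$ share the second marginal $\eta$. Since $g_\mu=-\eps\log T_\psi\xi_\mu$ and $g_\nu$ differ only by the constant $-\eps\log a$, the second Schrödinger equation is identical for both. The first equation reads $e^{-f_\mu(x)/\eps}=\int e^{(g_\mu-\psi(x,z))/\eps}\de\eta(z)$, and likewise for $\nu$. Hence $f_\mu=f_\nu+\eps\log a$ pointwise. Substituting into $e^{f_\mu/\eps}\mu=ae^{f_\nu/\eps}\nu$ gives $a\mu=a\nu$, so $\mu=\nu$. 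I expect Step 1 to be the main obstacle, namely rigorously justifying the optimality condition for $\eta_\mu$ (differentiability of $\OT^\eps_\psi(\mu,\cdot)$ and $\lambda$-a.e. positivity of the density). I would first try to obtain it from the joint formulation \eqref{eq:OTeps-decomp-psi-gamma}, where the optimal $\gamma$ has explicit Gibbs form with respect to $\mu\otimes\mu\otimes\lambda$.
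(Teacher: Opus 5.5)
Your proposal is correct and follows essentially the paper's argument. The paper also applies \Cref{corr:strict_tpsd}, reads off $\frac{\de\eta_\mu}{\de\lambda}=\big(T_\psi(e^{f_\mu/\eps}\mu)\big)^2$ directly from the Gibbs form $\rho^*_{x,y}\otimes\pi_\mu$ of the optimizer of \eqref{eq:OTeps-decomp-psi-gamma}, then uses injectivity and the Sinkhorn equations to get $\mu=\nu$; this is exactly your suggested fallback for Step 1, and it avoids differentiating $\OT^\eps_\psi(\mu,\cdot)$ altogether. The only difference is cosmetic: the paper uses the potential $f_\mu$ of the self-transport problem $\OT^\eps_{c_{\eps,\lambda}}(\mu,\mu)$ (so no constant $a$ appears and it closes with the self-transport Sinkhorn equation for $c_{\eps,\lambda}$), whereas you use the potentials of the sub-problem $\OT^\eps_\psi(\mu,\eta_\mu)$ and its Schrödinger system over the common $\eta$, which works equally well.
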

	
	Before giving the proof, let us note that the existence of a minimizer in \eqref{eq:eot_c_eps_lambda} can be obtained under rather standard hypotheses in optimal transport. For example, assuming $\psi$ to be lower semicontinuous and bounded from below, which ensures that $\OT^\eps_{\psi}$ is lower semicontinuous with respect to the narrow convergence (see e.g. \cite[Lemma 2.4]{Carlier2017}), the existence of an optimal $\eta$ then follows from the direct method of calculus of variations, the $\KL$ divergence being lower semicontinuous as well and coercive. We recall that if $c$ is a debiasable cost which is furthermore jointly continuous, its inf-representation can be chosen such that $\psi$ is lower semicontinuous (\Cref{prop:tpsd_feat_map}). As soon as an optimizer exists, it is necessarily unique from the strict convexity of $\KL$ and convexity of $\OT^\eps_{\psi}(\mu,\cdot)$.
	
	Concerning the injectivity of the operator $T_\psi$, it holds for example for $\bbX$ a separable Hilbert space and $c(x,y)=|x-y|_\bbX^2$, since in this case $\psi=2c$ and the Gaussian kernel is integrally strictly positive definite \cite[Definition 2.1, Theorem 3.1]{Ziegel2024}, i.e.\ $\displaystyle \iint_{\bbX \times \bbX} k(x,y) \de \rho(x)\de\rho(y)>0$ for all finite signed measures $\calM(\bbX)\backslash \{0\}$, see also \cite[p2392]{sriperumbudur11a} for the definition.
	
	\begin{proof}
		Let us denote by $\eta_\mu,\eta_\nu$ the unique optimizers for $\OT^\eps_{c_{\eps,\lambda}}(\mu,\mu)$ and $\OT^\eps_{c_{\eps,\lambda}}(\nu,\nu)$.
		To apply \Cref{corr:strict_tpsd}, we want to show that $\eta_\mu=\eta_\nu$ implies $\mu=\nu$.
		Since $\eta_\mu,\eta_\nu\ll \lambda$, $\eta_\mu=\eta_\nu$ if and only if $\frac{\de \eta_\mu}{\de\lambda}=\frac{\de \eta_\nu}{\de\lambda}$ $\lambda$-a.e., where
		\[
		\frac{\de \eta_\mu}{\de\lambda}(z)=\int_{\bbX\times\bbX} \rho^*_{x,y}(z)\de\pi_\mu(x,y),
		\]
		from the characterization provided by \Cref{thm:EOT_debiased},
		and similarly for $\frac{\de \eta_\nu}{\de\lambda}$.
		The plan $\pi_\mu$ is the unique optimal plan in $\OT^\eps_{c_{\eps,\lambda}}(\mu,\mu)$ and has the form
		\[
		\pi_\mu(x,y)=\mu\otimes\mu e^{\frac{-c_{\eps,\lambda}(x,y)+f_\mu(x)+f_\mu(y)}{\eps}}=\mu\otimes\mu \int_{\calZ} e^{\frac{-\psi(x,z)-\psi(y,z)+f_\mu(x)+f_\mu(y)}{\eps}} \de\lambda(z)=\mu\otimes\mu e^{\frac{f_\mu(x)+f_\mu(y)}{\eps}} A_{x,y}
		\]
		for some optimal entropic potential $f_\mu$.
		As $A_{x,y}$ is compensated by the definition of  $\rho^*_{x,y}$, we obtain
		\[
		\frac{\de \eta_\mu}{\de\lambda}(z)=\int_{\bbX\times\bbX} e^{\frac{-\psi(x,z)-\psi(y,z)+f_\mu(x)+f_\mu(y)}{\eps}} \de\mu\otimes\mu(x,y)=\left( \int_\bbX e^{\frac{-\psi(x,z)+f_\mu(x)}{\eps}} \de\mu(x)\right)^2=\big(T_\psi(e^{f_\mu/\eps}\mu)\big)^2(z)
		\]
		and similarly for $\frac{\de \eta_\nu}{\de\lambda}$.
		If $\eta_\mu=\eta_\nu$, then $T_\psi(e^{f_\mu/\eps}\mu)=T_\psi(e^{f_\nu/\eps}\nu)$ $\lambda$-a.e. By assumption, this implies $e^{f_\mu/\eps}\mu=e^{f_\nu/\eps}\nu$. 
		By the Sinkhorn's equations for the self-transports we get
		\[
		e^{-\frac{f_\mu(x)}{\eps}}=\int_\bbX e^{\frac{f_\mu(y)-c_{\eps,\lambda}(x,y)}{\eps}}\de\mu(y)=\int_\bbX e^{\frac{f_\nu(y)-c_{\eps,\lambda}(x,y)}{\eps}}\de\nu(y)=e^{-\frac{f_\nu(x)}{\eps}},
		\]
		which implies $f_\mu=f_\nu$ and therefore $\mu=\nu$.
	\end{proof}
	
	Note that given $\psi$, it is in general much simpler to compute $c_{\eps,\lambda}$ than $c_{}$. However, most often we are given $c_{}$, so what about $\OT^\eps_{c_{}}$? First, in some cases $c_{}$ and $c_{\eps,\lambda}$ actually do not differ much.
	
	\begin{corollary}\label{cor:EOT_debias_Gaussian_like} Let $k(x,y)\coloneqq e^{-\frac{c(x,y)}{\eps}}$. With the notations of \Cref{thm:EOT_debiased} assume further that there exists a probability measure $\lambda \in \calP(\calZ)$ and a function $f:\bbX\to (0,+\infty]$, which may depend on $\lambda$ and $\eps$, such that for all $x,y\in \bbX$
		\begin{equation}\label{eq:condition-const}
			k(x,y)=\frac{1}{f(x)f(y)}\int_\calZ e^{-\frac{\psi(x,z)+\psi(y,z)}{\eps}}\de\lambda(z)\stackrel{def \eqref{eq:lse-kernel}}{=}\frac{1}{f(x)f(y)}k_{\eps,\lambda}(x,y).
		\end{equation}
		Taking $-\eps\log$ of \eqref{eq:condition-const}, this condition can be equivalently written as $c_{\eps,\lambda}(x,y)=c(x,y)-\eps \log f(x)-\eps\log f(y)$ for all $x,y\in \bbX$. Then, 
		\begin{multline}\label{eq:eot_c_eps_lambda_c}
			\OT^\eps_c(\mu,\nu)=\OT^\eps_{c_{\eps,\lambda}}(\mu,\nu)+\eps\int_\bbX \log f(x)\de\mu(x)+\eps\int_\bbX \log f(y)\de\nu(y) \\
			\stackrel{\eqref{eq:eot_c_eps_lambda}}{=}\eps\int_\bbX \log f(x)\de\mu(x)+\eps\int_\bbX \log f(y)\de\nu(y)+\inf_{\eta\in\calP(\calZ)}  \OT^\eps_{\psi}(\mu,\eta)+\OT^\eps_{\psi}(\nu,\eta)+ \eps\,\KL(\eta|\lambda),
		\end{multline}
		so $\OT^\eps_{c}$ is debiasable.
	\end{corollary}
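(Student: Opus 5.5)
The first equality in \eqref{eq:eot_c_eps_lambda_c} is a standard invariance of entropic OT under separable perturbations of the cost. The identity \eqref{eq:condition-const} rewritten in $-\eps\log$ form gives $c(x,y)=c_{\eps,\lambda}(x,y)+\eps\log f(x)+\eps\log f(y)$. For any $\pi\in\Pi(\mu,\nu)$, integrate this against $\pi$. Since the marginals of $\pi$ are fixed to $\mu$ and $\nu$, the extra terms contribute exactly $\eps\int\log f\,\de\mu+\eps\int\log f\,\de\nu$, independently of $\pi$. The entropic term $\eps\KL(\pi|\mu\otimes\nu)$ is unchanged. Taking the infimum over $\pi$ then gives the first line of \eqref{eq:eot_c_eps_lambda_c}. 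The second line follows by substituting \eqref{eq:eot_c_eps_lambda} from \Cref{thm:EOT_debiased}.

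The step needing care is well-posedness of the $\infty-\infty$ combinations, since $\log f$ may take the value $+\infty$ and need not be integrable. I would handle it as follows.
\begin{itemize}
\item Since $\lambda$ is a probability measure, \Cref{thm:EOT_debiased} gives $c_{\eps,\lambda}\ge c$. Combined with the identity above, this yields $\eps\log f(x)+\eps\log f(y)\le 0$, and taking $x=y$ gives $\log f\le 0$. So $\log f$ takes values in $[-\infty,0]$; together with $f>0$ this forces $\log f$ to be finite, with $0<f\le 1$. (Here I am reading $k$ as finite-valued; otherwise one restricts to the set where $f<+\infty$.)
\item The integrals $\int\log f\,\de\mu$ therefore lie in $[-\infty,0]$ and are well defined.
\item If $\int\log f\,\de\mu=-\infty$, one checks that both sides equal the same extended value, or restricts attention to measures for which these integrals are finite.
\end{itemize}

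Debiasability then follows from the decomposition $\OT^\eps_c(\mu,\nu)=\OT^\eps_{c_{\eps,\lambda}}(\mu,\nu)+g(\mu)+g(\nu)$ with $g(\mu)=\eps\int\log f\,\de\mu$. The cost $\OT^\eps_{c_{\eps,\lambda}}$ is debiasable by \Cref{thm:EOT_debiased}, and adding $g(\mu)+g(\nu)$ preserves debiasability by \Cref{lem:operations_preserving}-\ref{fact:g-c}. Alternatively, $(\mu,\eta)\mapsto\OT^\eps_\psi(\mu,\eta)+g(\mu)+\tfrac{\eps}{2}\KL(\eta|\lambda)$ is an inf-representation in the sense of \Cref{prop:tpsd_feat_map}.

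The only genuine obstacle is the extended-value bookkeeping in the second step. The rest is a direct computation.
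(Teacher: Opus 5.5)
Your proposal is correct and follows the paper's one-line argument. The separable identity $c=c_{\eps,\lambda}+\eps\log f(x)+\eps\log f(y)$ shifts $\OT^\eps$ by the marginal-only terms $\eps\int\log f\,\de\mu+\eps\int\log f\,\de\nu$, and debiasability then comes from \eqref{eq:eot_c_eps_lambda} via \Cref{lem:operations_preserving}-\ref{fact:g-c}, or equivalently from the inf-representation you write down. Your extended-value bookkeeping is extra care the paper skips. One precision: $c_{\eps,\lambda}\ge c$ forces $f\le 1$ only where $c(x,x)<+\infty$. At other points $c(x,\cdot)\equiv+\infty$ and $f(x)$ is unconstrained, so the relevant issue is not $f<+\infty$ as your caveat suggests. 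This is harmless, since any $\mu$ charging such points has $\OT^\eps_c(\mu,\cdot)\equiv+\infty$.
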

	\begin{proof}
		The debiasability stems from \eqref{eq:eot_c_eps_lambda}.%
	\end{proof}
	By \eqref{eq:eot_c_eps_lambda_c}, strict debiasability holds under the same assumptions as \Cref{cor:strict_deb_lse}. We now move to an example of explicit computation of the midpoint for the squared Euclidean cost and two Dirac masses.

	\begin{example}[Squared Euclidean distance]\label{ex:computation_midpoint_Gaussian}
		Consider the case $\bbX=\R^d$ endowed with the squared Euclidean distance, $c(x,y)=|x-y|^2$. This is a debiased cost which admits the decomposition $|x-y|^2=\inf_{z\in\R^d} 2|z-x|^2+2|z-y|^2$. Then, if $\lambda$ is chosen to be the Lebesgue measure on $\R^d$, using that $\left|z- \frac{x+y}{2}\right|^2=\frac{|z-x|^2}{2}+\frac{|z-y|^2}{2}-\frac{|x-y|^2}{4}$, we obtain \eqref{eq:condition-const} in the form
		\begin{equation}\label{eq:condition-const_2}
			\int_{\R^d} e^{-\frac{2|z-x|^2+2|z-y|^2}{\eps}}\de\lambda(z)=e^{-\frac{|x-y|^2}{\eps}}\int_{\R^d} e^{-\frac{4\left|z-\frac{x+y}{2}\right|^2}{\eps}}\de\lambda(z)=e^{-\frac{|x-y|^2}{\eps}}\left(\frac{\pi\eps}{4}\right)^{\frac{d}{2}},
		\end{equation}
		and therefore $\OT^\eps_{c_{\eps,\lambda}}(\mu,\nu)= \OT^\eps_c(\mu,\nu)-\frac{d\eps}{2}\log(\frac{\pi\eps}{4})$.
		As an example, take $\mu=\delta_{x}$, $\nu=\delta_{y}$ and let us test $\eta\in\calP(\R^d)$ of the form $\eta=\frac{1}{(2\pi)^{d/2} \sigma^d}e^{-\frac{|z-m|^2}{2\sigma^2}}$, for $m\in\R^d,\sigma\in\R$. In this case all computations are explicit and we try to optimize with respect to $\mu,\sigma$ to obtain \eqref{eq:condition-const_2}. It holds
		\[
		\begin{aligned}
			\OT^\eps_{c}(\mu,\nu)&=|x-y|^2,\\
			\OT^\eps_{\psi}(\mu,\eta)&=\frac{2}{(2\pi)^{d/2}\sigma^d}\int_{\R^d} |x-z|^2 e^{-\frac{|z-m|^2}{2\sigma^2}} \de\lambda(z)=2|x-m|^2+2d\sigma^2 \quad \text{(and similarly for $\OT^\eps_{\psi}(\nu,\eta)$)},\\
			\KL(\eta|\lambda)&
			\begin{aligned}[t]
				&=\frac{1}{(2\pi)^{d/2}\sigma^d}\int_{\R^d} \log\left(\frac{1}{(2\pi)^{d/2}\sigma^d} e^{-\frac{|z-m|^2}{2\sigma^2}}\right) e^{-\frac{|z-m|^2}{2\sigma^2}} \de\lambda(z)\\
				&=\frac{1}{(2\pi)^{d/2}\sigma^d}\int_{\R^d} \left[ \log(\frac{1}{(2\pi)^{d/2}\sigma^d}) -\frac{|z-m|^2}{2\sigma^2} \right] e^{-\frac{|z-m|^2}{2\sigma^2}}\de\lambda(z)=\log(\frac{1}{(2\pi)^{d/2}\sigma^d})-\frac{d}{2},
			\end{aligned}
		\end{aligned}
		\]
		which leads us to solving
		\[
		\min_{m,\sigma} 2|x-m|^2+2|y-m|^2+4d\sigma^2+\eps \log(\frac{1}{(2\pi)^{d/2}\sigma^d})-\frac{d\eps}{2}.
		\]
		The optimal solution is given by $\overline m=\frac{x+y}{2}$, $\overline\sigma=\sqrt{\frac{\eps}{8}}$, and we obtain
		\[
		2|x-\overline m|^2+2|y-\overline m|^2+4d\overline\sigma^2+\eps \log(\frac{1}{(2\pi)^{d/2}\overline\sigma^d})-\frac{d\eps}{2}=|x-y|^2-\frac{d\eps}{2}\log\left(\frac{\pi\eps}{4}\right),
		\]
		so that we get the desired equality. This implies that the optimal $\eta$ is indeed a Gaussian measure which is centered in the mid-point between $x$ and $y$. Similar explicit results can be obtained in the case where $\mu$ and $\nu$ are Gaussian measures, based on the formulas provided in \cite[Theorems 1 and 3]{janati2020}.
	\end{example}
    
	\begin{remark}[Relation with Entropic Barycenter] Note that we can write \eqref{eq:eot_c_eps_lambda} and hence \eqref{eq:eot_c_eps_lambda_c} as
		\begin{equation}\label{eq:eot_c_eps_lambda_doubly_reg}
			\OT_{c_{\eps,\lambda}}^\eps(\mu,\nu)=2\inf_{\eta\in\calP(\calZ)} \frac{1}{2} \OT^\eps_{\psi}(\mu,\eta)+\frac{1}{2}\OT^\eps_{\psi}(\nu,\eta)+ \frac{\eps}{2}\,\KL(\eta|\lambda).%
		\end{equation}
		Eq.\eqref{eq:eot_c_eps_lambda_doubly_reg} is a specific case of a doubly Regularized Entropic Wasserstein Barycenter \cite[Definition 1]{Chizat2025}. Actually this $(\eps,\frac{\eps}{2})$-scaling between the inner and outer regularization has been proven to have particularly favorable approximation properties (ibid., Theorem 3.2).
	\end{remark}

	The issue with condition \eqref{eq:condition-const} is that it essentially requires some sort of translation-invariant property for the measure $\lambda$. In the space $\R^d$, one can rely on the Lebesgue measure, which is in fact the only nontrivial translation-invariant locally finite Borel measure (up to positive scaling). The same reasoning is valid if $c$ is a more general negative definite cost on a finite dimensional space, that is, it is essentially Euclidean up to composition with a certain mapping $\phi$. In that case, one has a natural notion of Lebesgue measure and can construct the measure $\lambda$ by pulling this back via $\phi$.
	However, no such measure exists in infinite dimensional spaces. 
	
	\subsection{Negative definite kernels}\label{sec:EOT_neg_kernel}

	Nevertheless, the next result shows that for general negative definite costs in infinite dimensional settings, a decomposition similar to \eqref{eq:eot_c_eps_lambda} is available. This can be obtained by showing that the reproducing kernels obtained through $\exp\Big(-\frac{c(x,y)}{\eps}\Big)$ with $c$ negative definite are actually completely positive, i.e.\ they are equal to an inner product of positive features in some $L^2$ space.\nc
	
	\begin{theorem}
		\label{prop:Gaussian-factorisation-separable}
		Let \(\bbX\) be a Polish space and let \(c:\bbX\times \bbX\to\R\) be a continuous negative definite cost. Fix \(\eps>0\). Then there exists a Polish space $\calZ$, a probability measure \(\lambda \in \calP(\calZ)\) and a measurable function \(\rho_\eps:\bbX\times \calZ\to(0,\infty)\) such that for every \(x,y\in \bbX\)
		\begin{equation}\label{eq:completely_pos_kernel}
			\exp\!\Big(-\frac{c(x,y)}{\eps}\Big)=\int_{\calZ} \rho_\eps(x,z)\,\rho_\eps(y,z)\,\de\lambda(z).
		\end{equation}
		For $\tilde \psi(x,z)\coloneqq -\eps \log \rho_\eps(x,z)$ and $\tilde c_{\eps,\lambda}(x,y) \coloneqq -\eps \log \int_{\calZ} e^{-\frac{\tilde \psi(x,z)+\tilde \psi(y,z)}{\eps}}\de\lambda(z)$, we then have that $c=\tilde c_{\eps,\lambda}$, hence
		\begin{equation}\label{eq:eot_c_eps_lambda_neg_def}
			\OT^\eps_c(\mu,\nu)=\inf_{\eta\in\calP(\calZ)} \OT^\eps_{\tilde \psi}(\mu,\eta)+\OT^\eps_{\tilde \psi}(\nu,\eta)+\eps\,\KL(\eta|\lambda).
		\end{equation}
        An explicit choice of $\rho_\eps$ and $\tilde \psi$ is given in \eqref{eq:rho-explicit} and \eqref{eq:tildepsi-explicit} based on \eqref{eq:c_neg_def_kernel}.
	\end{theorem}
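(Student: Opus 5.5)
We will first use \Cref{prop:def_rep_kernels} to write
\begin{equation*}
c(x,y)=\frac{c(x,x)}{2}+\frac{c(y,y)}{2}+|\phi(x)-\phi(y)|^2_{H_c},
\end{equation*}
with $H_c$ a Hilbert space and $\phi:\bbX\to H_c$. Continuity of $c$ gives continuity of $x\mapsto c(x,x)$, and then continuity of $\phi$, because
\begin{equation*}
|\phi(x)-\phi(y)|^2=c(x,y)-\tfrac12 c(x,x)-\tfrac12 c(y,y).
\end{equation*}
Since $\bbX$ is Polish, hence separable, $\phi(\bbX)$ is separable. We may therefore replace $H_c$ by the closed span of $\phi(\bbX)$, a separable Hilbert space, and fix an isometry with $\ell^2$ (or $\R^n$).

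The prefactor $e^{-c(x,x)/(2\eps)}e^{-c(y,y)/(2\eps)}$ factorizes trivially into the $\rho_\eps$'s. The problem thus reduces to writing the Gaussian kernel $e^{-|u-v|^2/\eps}$ on $\ell^2$ as $\int \tilde\rho(u,z)\tilde\rho(v,z)\,\de\lambda(z)$ with $\tilde\rho>0$ and $\lambda$ a probability measure.

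In finite dimension, \Cref{ex:computation_midpoint_Gaussian} already does this with $\lambda$ the Lebesgue measure. The only fix needed is to renormalize by a Gaussian reference measure so that $\lambda$ becomes a probability. In infinite dimension we tensorize coordinatewise. We take $\calZ=\R^{\N}$ with the product topology (Polish) and
\begin{equation*}
\lambda=\bigotimes_i \mathcal N(0,\sigma_i^2).
\end{equation*}
For each coordinate we use the one-dimensional identity
\begin{equation*}
e^{-(a-b)^2/\eps}=\int_\R g_i(a,z)\,g_i(b,z)\,\de\mathcal N(0,\sigma_i^2)(z),
\end{equation*}
obtained by dividing the Lebesgue identity by the Gaussian density:
\begin{equation*}
g_i(a,z)=C_i\exp\!\big(-2(z-a)^2/\eps+z^2/(4\sigma_i^2)\big).
\end{equation*}
This is integrable as long as $\sigma_i^2$ is large compared with $\eps$. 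We then set $\tilde\rho(u,z)=\prod_i g_i(u_i,z_i)$.

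The main obstacle is convergence of this infinite product and the exchange of product and integral. The constants $C_i$ depend on $\sigma_i$. Expanding the square in the exponent shows that $g_i(a,z)=\exp(\alpha_i a z_i-\beta_i a^2)\,h_i(z_i)$ up to normalization. The factor $h_i$ alone integrates to a constant tending to $1$ only if $\sigma_i$ is tuned appropriately, so a naive choice diverges.

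The first remedy to try is a Cameron–Martin type construction. Take $\lambda$ to be a centered Gaussian on a larger separable space $E\supset \ell^2$ (an abstract Wiener space), with Cameron–Martin space $\ell^2$. Then use
\begin{equation*}
\tilde\rho(u,z)=\exp\!\big(\langle u,z\rangle^{\sim}/\sqrt{\eps'} - |u|^2/(2\eps')\big)\times(\text{correction}),
\end{equation*}
where $\langle u,z\rangle^\sim$ is the Paley–Wiener integral, a centered Gaussian variable with variance $|u|^2$. The Gaussian moment generating function gives
\begin{equation*}
\int e^{\langle u+v,z\rangle^\sim/\sqrt{\eps'}}\,\de\lambda(z)=e^{|u+v|^2/(2\eps')}.
\end{equation*}
Hence
\begin{equation*}
\int\tilde\rho(u,\cdot)\tilde\rho(v,\cdot)\,\de\lambda=e^{\langle u,v\rangle/\eps'}.
\end{equation*}
Multiplying by $e^{-|u|^2/\eps}e^{-|v|^2/\eps}$ and choosing $\eps'=\eps/2$ yields
\begin{equation*}
e^{-|u-v|^2/\eps}.
\end{equation*}
This route avoids all normalization issues and gives an explicit, strictly positive, measurable $\rho_\eps$; this is what we expect \eqref{eq:rho-explicit} to be. Measurability in $(x,z)$ follows from choosing a jointly measurable version of the Paley–Wiener map, which is linear and continuous in $u$ for $\lambda$-a.e.\ $z$ after restricting to a full-measure set, or as an $L^2$-limit of finite sums $\sum_i u_i z_i$. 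The space $\calZ=\R^\N$ with $\lambda=\mathcal N(0,1)^{\otimes\N}$ suffices.

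Finally, with $\tilde\psi=-\eps\log\rho_\eps$ we have $e^{-c/\eps}=k_{\eps,\lambda}$ for this $\tilde\psi$, so $c=\tilde c_{\eps,\lambda}$. This is exactly the setting of \eqref{eq:lse-cost}, and \Cref{thm:EOT_debiased} applies directly and yields \eqref{eq:eot_c_eps_lambda_neg_def}. The integrability condition there,
\begin{equation*}
\int e^{-2\tilde\psi(x,\cdot)/\eps}\,\de\lambda<\infty,
\end{equation*}
is just $e^{-c(x,x)/\eps}<\infty$ up to constants, by the same Gaussian computation with $u=v$. The inf-representation property of $\tilde\psi$ is only used through the soft-min identity, so it need not be checked separately.
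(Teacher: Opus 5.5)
Your proposal is correct and, setting aside the abandoned coordinatewise $\mathcal{N}(0,\sigma_i^2)$ detour, it is essentially the paper's proof. You use the same decomposition $c(x,y)=s(x)+s(y)+|\phi(x)-\phi(y)|^2_{H_c}$ with $\phi$ continuous and $H_c$ separable, the same choice $\calZ=\R^{\mathbb{N}}$ with $\lambda=\mathcal{N}(0,1)^{\otimes\mathbb{N}}$, and the Gaussian moment generating function of $\sqrt{2/\eps}\,\langle\phi(x)+\phi(y),z\rangle^{\sim}$, which gives exactly the $\rho_\eps$ of \eqref{eq:rho-explicit}. You then apply \Cref{thm:EOT_debiased}, and you are right that its inf-representation hypothesis plays no role in deriving \eqref{eq:eot_c_eps_lambda} (this $\tilde\psi$ is in general not an inf-representation of $c$).

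One small inaccuracy: the Paley--Wiener map cannot be continuous and linear in $u$ for $\lambda$-a.e.\ $z$, since $z\notin\ell^2$ almost surely. What you actually need is weaker: the almost-sure additivity $\langle u+v,z\rangle^{\sim}=\langle u,z\rangle^{\sim}+\langle v,z\rangle^{\sim}$ for each fixed pair $u,v$, and joint measurability via the almost-sure limit (e.g.\ the $\limsup$) of the partial sums $\sum_{i\le n}u_iz_i$. Your fallback essentially provides both.
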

	\Cref{ex:computation_midpoint_Gaussian} falls within the setting of the theorem, the main difference being that $\lambda$ is now taken to be a probability measure rather than the Lebesgue measure.
	\begin{proof}
		Applying \Cref{prop:def_rep_kernels}, we take a Hilbert space \(H_c\), a map \(\phi:\bbX\to H_c\) and a scalar function \(s:\bbX\to\R\) such that for all \(x,y\in \bbX\)
		\begin{equation}\label{eq:c_neg_def_kernel}
			c(x,y)=|\phi(x)-\phi(y)|_{H_c}^2+s(x)+s(y).
		\end{equation}
		Because  \(c\) is continuous, we have $x\mapsto s(x)=\frac{c(x,x)}{2}$ continuous, thus \eqref{eq:c_neg_def_kernel} gives that the map \(\phi\) is continuous. Consequently, as \(\bbX\) is a Polish space, the image \(\phi(\bbX)\) is separable. Hence \(H_c\) is a separable Hilbert space as the closed linear span of \(\phi(\bbX)\).
		
		Fix a countable orthonormal basis \(\{e_i\}_{i\in I}\) of \(H_c\). For each \(x\in \bbX\) write the coordinates of \(\phi(x)\) in this basis:
		\[
		\phi(x)=\sum_{i\in I}\phi_i(x)\,e_i,\qquad (\phi_i(x))_{i\in I}\in\ell^2(I),
		\]
		where by the \(\ell^2\)-condition each sequence \(\phi_i(x)\) is square-summable.
		
		Let \(\calZ=\R^I\) be the product space of real coordinate sequences indexed by \(I\).  On the coordinate cylinder \(\sigma\)-algebra of \(\R^I\) consider the product Gaussian probability measure \(\lambda=\bigotimes_{i\in I} \calN(0,1)\). We write \(z\in \calZ\) as \(z=(z_i)_{i\in I}\) with \(z_i\sim \calN(0,1)\) i.i.d.\ under \(\lambda\). Define \(\rho_\eps:\bbX\times \calZ\to(0,\infty)\) by the explicit formula
		\begin{equation}\label{eq:rho-explicit}
			\rho_\eps(x,z)\coloneqq \exp\left( \sqrt{\frac{2}{\varepsilon}} \langle \phi(x), z \rangle_{H_c} - \frac{2}{\varepsilon} |\phi(x)|_{H_c}^2 - \frac{s(x)}{\varepsilon} \right)
			=\exp\!\left(\sqrt{\frac{2}{\eps}}\sum_{i\in I} \phi_i(x)\,z_i \;-\; \frac{2}{\eps}\sum_{i\in I} \phi_i(x)^2 \;-\; \frac{s(x)}{\eps}\right).
		\end{equation}
        for which
        \begin{equation}\label{eq:tildepsi-explicit}
			\tilde \psi(x,z) \coloneqq -\eps \log \rho_\eps(x,z)= - \sqrt{2\varepsilon} \langle \phi(x), z \rangle_{H_c} + 2|\phi(x)|_{H_c}^2 + s(x).
		\end{equation}
		We first check that for each fixed $x\in \bbX$ the series \(\sum_{i\in I} \phi_i(x)z_i\) is well defined \(\lambda\)-almost surely. Because $(\phi_i(x))_{i\in I}\in\ell^2(I)$ and the coordinates $(z_i)_{i\in I}$ are independent \(\calN(0,1)\), the Gaussian series
		\(\sum_{i\in I} \phi_i(x)z_i\) converges \(\lambda\)-almost surely and in \(L^2(\calZ,\lambda)\). Furthermore, since \(I\) is countable and \(\phi\) is measurable, the map \((x,z) \mapsto \sum_{i \in I} \phi_i(x) z_i\) is jointly measurable.
		Consequently, the exponential in \eqref{eq:rho-explicit} is a.s.\ finite and strictly positive, and \(\rho_\eps\) is jointly measurable. In particular \(\rho_\eps(x,\cdot)\in L^2(\calZ,\lambda)\) for every $x\in \bbX$.
		
		Fix \(x,y\in \bbX\). Using independence of the coordinates \(z_i\sim \calN(0,1)\) and the one-dimensional Gaussian moment-generating formula \(\mathbb{E}[e^{tz_i}]=e^{t^2/2}\), we compute
		\[
		\begin{aligned}
			&\int_{\calZ} \rho_\eps(x,z)\rho_\eps(y,z)\,\de\lambda(z)\\
			&= \exp\!\left(-\frac{s(x)+s(y)}{\eps}\right)
			\int_{\calZ} \exp\!\left(\sqrt{\frac{2}{\eps}}\sum_{i\in I} (\phi_i(x)+\phi_i(y))z_i
			-\frac{2}{\eps}\sum_{i\in I}\big(\phi_i(x)^2+\phi_i(y)^2\big)\right)\,\de\lambda(z)\\[6pt]
			&=\exp\!\left(-\frac{s(x)+s(y)}{\eps}-\frac{2}{\eps}\sum_{i\in I}\big(\phi_i(x)^2+\phi_i(y)^2\big)\right)\prod_{i \in I}
			\mathbb{E}\Big[\exp\!\Big(\sqrt{\frac{2}{\eps}}(\phi_i(x)+\phi_i(y))Z_i\Big)\Big]\\[6pt]
			&=\exp\!\left(-\frac{s(x)+s(y)}{\eps}-\frac{2}{\eps}\sum_{i\in I}\big(\phi_i(x)^2+\phi_i(y)^2\big)\right)\prod_{i \in I}
			\exp\!\left(\frac{1}{2}\cdot\frac{2}{\eps}\,(\phi_i(x)+\phi_i(y))^2\right)\\[6pt]
			&=\exp\!\Big(-\frac{s(x)+s(y)}{\eps}+\sum_{i\in I}\Big(\frac{1}{\eps}(\phi_i(x)+\phi_i(y))^2
			-\frac{2}{\eps}\big(\phi_i(x)^2+\phi_i(y)^2\big)\Big)\Big).
		\end{aligned}
		\]
		For each index \(i\) we use that
		\[
		\frac{1}{\eps}(\phi_i(x)+\phi_i(y))^2-\frac{2}{\eps}\big(\phi_i(x)^2+\phi_i(y)^2\big)
		= -\frac{1}{\eps}\big(\phi_i(x)^2+\phi_i(y)^2-2\phi_i(x)\phi_i(y)\big).
		\]
		This  yields
		\[
		\sum_{i\in I}\Big(\frac{1}{\eps}(\phi_i(x)+\phi_i(y))^2
		-\frac{2}{\eps}\big(\phi_i(x)^2+\phi_i(y)^2\big)\Big)
		= -\frac{1}{\eps}|\phi(x)-\phi(y)|_{H_c}^2.
		\]
		Therefore
		\[
		\int_{\calZ} \rho_\eps(x,z)\rho_\eps(y,z)\,\de\lambda(z)
		= \exp\!\Big(-\frac{s(x)+s(y)}{\eps}\Big)\,
		\exp\!\Big(-\frac{|\phi(x)-\phi(y)|_{H_c}^2}{\eps}\Big)
		= \exp\!\Big(-\frac{c(x,y)}{\eps}\Big),
		\]
		which is the claimed identity.
	\end{proof}
	
	\begin{corollary}[Strict debiasability for negative definite costs]\label{thm:strict_neg_def}
		Let $\bbX$ be a Polish space and $c:\bbX\times\bbX\to\R$ a negative definite cost, with $\phi$ as in \eqref{eq:c_neg_def_kernel}. Assume that $\phi:\bbX\to H_c$ is injective. Then $\OT^\eps_c$ is strictly debiasable.
	\end{corollary}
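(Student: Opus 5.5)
The plan is to reduce to \Cref{cor:strict_deb_lse} through the factorization of \Cref{prop:Gaussian-factorisation-separable}. There we have $c=\tilde c_{\eps,\lambda}$ with $\calZ=\R^I$, $\lambda=\bigotimes_{i\in I}\calN(0,1)$ and the explicit $\tilde\psi$ of \eqref{eq:tildepsi-explicit}. Consequently $\OT^\eps_c=\OT^\eps_{\tilde c_{\eps,\lambda}}$, and strict debiasability follows once the two hypotheses of \Cref{cor:strict_deb_lse} are checked: existence of a minimizer $\eta$ in \eqref{eq:eot_c_eps_lambda_neg_def}, and injectivity of $T_{\tilde\psi}$ on $\calM_+(\bbX)\setminus\{0\}$. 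I will assume continuity of $c$ as in \Cref{prop:Gaussian-factorisation-separable}, since it is needed to invoke that factorization.

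\textbf{Existence of a minimizer.} I would avoid the direct method, because $\tilde\psi$ is unbounded below in $z$. Instead I would construct the minimizer explicitly from the proof of \Cref{thm:EOT_debiased}. The infimum over $\gamma$ there is attained at $\gamma^*=\rho^*_{x,y}\otimes\pi^*$, where $\pi^*$ is the unique optimal entropic plan for $\OT^\eps_c(\mu,\nu)$ and $\rho^*_{x,y}$ is the Gibbs measure. Then $\eta^*=(\p_3)_\#\gamma^*$ attains the infimum in \eqref{eq:eot_c_eps_lambda_neg_def}, because the inequality \eqref{eq:KL-ineq-txt} shows the value at $\eta^*$ is at most the value of $\gamma^*$. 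This argument is generic and is essentially the one used in the proof of \Cref{cor:strict_deb_lse} anyway.

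\textbf{Injectivity of $T_{\tilde\psi}$.} This is the main step. We have
\[
(T_{\tilde\psi}\xi)(z)=\int_\bbX \exp\Big(\sqrt{2/\eps}\,\langle\phi(x),z\rangle-\tfrac{2}{\eps}|\phi(x)|^2-\tfrac{s(x)}{\eps}\Big)\de\xi(x).
\]
Set $\tilde\xi\coloneqq e^{-2|\phi|^2/\eps-s/\eps}\xi$ and push it forward by $\phi$ to get a finite measure $\zeta$ on $H_c$; it is finite because the relevant integrals are finite. Then $T\xi$ is a Laplace transform of $\zeta$ evaluated along the Gaussian field $z\mapsto\langle h,z\rangle$.

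Suppose $T\xi_1=T\xi_2$ $\lambda$-a.e. For every finite set of coordinates $J$, integrate both sides against $\prod_{i\in J}e^{\theta_i z_i}$ times the Gaussian density. This converts the equality into equality of moment-generating-type functions in finitely many coordinates. By analyticity and uniqueness of the Laplace transform, the finite-dimensional marginals of $e^{-|\cdot|^2/\eps}$-reweighted versions of $\zeta_1$ and $\zeta_2$ coincide. An equivalent route is to compute $\int T\xi\cdot\rho(y,\cdot)\de\lambda$, which gives kernel-mean embeddings under the Gaussian kernel $e^{-|\phi(x)-\phi(y)|^2/\eps}$, and then use integral strict positive definiteness of the Gaussian kernel on the separable Hilbert space $H_c$ \cite{Ziegel2024}. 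Either way, cylinder sets generate the Borel $\sigma$-algebra of separable $H_c$, so $\zeta_1=\zeta_2$.

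Finally, since $\phi$ is injective and continuous from a Polish space, it is a Borel isomorphism onto its image by Lusin--Souslin. Hence $\tilde\xi_1=\tilde\xi_2$, and since the density is positive, $\xi_1=\xi_2$.

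The delicate point is making the Laplace-transform step rigorous in infinite dimension, in particular justifying Fubini and the integrability of $e^{\theta\cdot z}$ against $T\xi$. I expect the kernel-embedding route via the Gaussian kernel's strict positive definiteness to be the cleanest.
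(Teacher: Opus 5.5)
This is correct and is the paper's argument. You reduce to \Cref{cor:strict_deb_lse} via the factorization of \Cref{prop:Gaussian-factorisation-separable}, whose continuity hypothesis you rightly carry over. You read $T_{\tilde\psi}\xi$ as a Laplace transform of $\zeta=\phi_\#(e^{-2|\phi|^2/\eps-s/\eps}\xi)$, and you conclude by injectivity of $\phi$ (your Lusin--Souslin remark) and positivity of the weight.

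Your explicit minimizer $\eta^*$ and your finite-dimensional Gaussian testing supply two things the paper's proof leaves implicit. First, the existence hypothesis of \Cref{cor:strict_deb_lse}, which the direct method cannot give since $\tilde\psi$ is unbounded below in $z$. Second, the passage from a $\lambda$-a.e.\ identity in $z$ (with $z\notin\ell^2(I)$ a.s.\ when $\dim H_c=\infty$) to the Laplace injectivity of \cite[Lemma 3.3]{Ziegel2024}. The integrability you worry about follows from $T_{\tilde\psi}\xi\in L^2(\lambda)$, because $(T_{\tilde\psi}\xi)^2=\de\eta_\mu/\de\lambda$ for the relevant $\xi=e^{f_\mu/\eps}\mu$. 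Finally, the weight that actually appears on $\zeta$ is $e^{+|h|^2/\eps}$, not $e^{-|h|^2/\eps}$; this is harmless since it is positive.
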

	One can find an injective $\phi$ for instance for $c(x,y)=|x-y|_H^p$ for $p\in(0,2]$ on any Hilbert space  $H$, since \cite[Exercise 2.13, p79]{Berg1984} gives that these $c$ are negative definite and we have $s(x)=\frac{c(x,x)}{2}=0$, so $|\phi(x)-\phi(y)|_{H_c}^2=|x-y|_H^p$. Injectivity of $\phi$ for other negative definite kernels can be handled through similar arguments.
	\begin{proof}
		By \Cref{prop:Gaussian-factorisation-separable}, $c = \tilde{c}_{\eps,\lambda}$ for the function $\tilde\psi(x,z) = -\eps\log\rho_\eps(x,z)$ and the product Gaussian measure $\lambda = \bigotimes_{i\in I}\calN(0,1)$, so \Cref{cor:strict_deb_lse} applies provided we verify the injectivity of
		\[
		(T_{\tilde\psi}\xi)(z) = \int_\bbX \rho_\eps(x,z)\,\de\xi(x)
		\]
		on $\calM_+(\bbX)\setminus\{0\}$, where $\rho_\eps$ is given by \eqref{eq:rho-explicit}.

		For $\xi\in\calM_+(\bbX)$, define the strictly positive weight $w(x)=\exp\!\big(-s(x)/\eps - 2|\phi(x)|^2_{H_c}/\eps\big) > 0$ and the pushforward measure $\hat\xi = \phi_\#(w\,\xi)\in\calM_+(H_c)$. Using \eqref{eq:rho-explicit}, we compute
		\[
		(T_{\tilde\psi}\xi)(z) = \int_\bbX \exp\!\Big(\sqrt{\tfrac{2}{\eps}}\sum_{i\in I}\phi_i(x)\,z_i - \tfrac{2}{\eps}\sum_{i\in I}\phi_i(x)^2 - \tfrac{s(x)}{\eps}\Big)\,\de\xi(x) = \int_{H_c} \exp\!\Big(\sqrt{\tfrac{2}{\eps}}\sum_{i\in I} h_i z_i\Big)\,\de\hat\xi(h),
		\]
		which is the Laplace transform of $\hat\xi$ evaluated at $-\sqrt{2/\eps}\,z$. Suppose $T_{\tilde\psi}\hat\xi_1 = T_{\tilde\psi}\hat\xi_2$ $\lambda$-a.e. By injectivity of the Laplace transform \cite[Lemma 3.3]{Ziegel2024} on separable Hilbert spaces, we have $\hat\xi_1 = \hat\xi_2$, i.e.\ $\phi_\#(w\,\xi_1) = \phi_\#(w\,\xi_2)$. By injectivity of $\phi$, we recover $w\,\xi_1 = w\,\xi_2$ as measures on $\bbX$, and since $w>0$ we conclude that $\xi_1 = \xi_2$.
		
	\end{proof}
	
	\subsection{Continuous reproducing kernels}\label{sec:EOT_rk_compact}
	
	For kernels such that \eqref{eq:completely_pos_kernel} does not hold, e.g.\ $k$ merely psd rather than completely positive, we use another strategy. Rather than guessing the inf-representation, we now use a minimax theorem in a compact convex-nonconcave setting. We use here the shorthand $| \cdot |_k$ and $(\cdot,\cdot)_k$ for the norm and inner product over the reproducing kernel Hilbert space (RKHS) $\calH_k$ induced by $k$.
	
	\begin{theorem}\label{thm:EOT_debias_kernel_compact}
		Let $\bbX$ be a Polish space and $c: \bbX \times \bbX \to \mathbb{R}$ be a continuous cost function, bounded from below. Let $\eps>0$ and assume that $k = e^{-c/\eps}$ defines a reproducing kernel Hilbert space $\calH_k$.
		Let $M>0$ and define
		\begin{equation}
			\calD^M\coloneqq \left\{(\mu, \nu)\in\calP(\bbX) ~:~|e^{c/\eps}|_{ L^1(\mu\otimes\nu)}\le M \text{ and } \int_{\bbX} \sqrt{k(x,x)}\de \mu(x),\int_{\bbX} \sqrt{k(x,x)} \de \nu(x)<+\infty\right\}.
		\end{equation}
		Then it holds for all $(\mu, \nu)\in \calD^M$:
		\[
		\OT^\eps_c(\mu, \nu) = \sup_{\mu', \nu' \in \calM_+(\bbX)} \inf_{z \in \calH_k} \calL(\mu', \nu', z) = \inf_{z \in \calH_k} \sup_{(\mu',\nu')\in \calD^M_\mu \times  \calD^M_\nu} \calL(\mu', \nu', z)
		\]
		for the saddle point functional $\calL:\calM_+(\bbX) \times \calM_+(\bbX)\times \calZ \to \mathbb{R}\cup\{+\infty\}$ defined as
		\begin{equation}\label{eq:ker-lagrangian}
			\calL(\mu', \nu', z) \coloneqq \eps \left[\int_\bbX \log\left(\frac{\de\mu'}{\de\mu}\right) \de\mu  +\int_\bbX \log\left(\frac{\de\nu'}{\de\nu}\right) \de\nu + |z- k_{\mu'} |_k^2 + |z- k_{\nu'} |_k^2 - \frac{1}{2}|k_{\mu'}|_k^2 - \frac{1}{2}|k_{\nu'}|_k^2 + 1 \right]
		\end{equation}
		where $k_{\mu'}\coloneqq \int k(\cdot,y) d \mu'(y)$ is the kernel mean embedding and
		\[
		\calD^M_\mu \coloneqq \left\{ \mu'\in\calM_+(\bbX)~:~ \mu'\ll\mu \text{ and } \textstyle \big|\frac{\de\mu'}{\de\mu}\big|_{L^1(\mu)} \le M \right\}.
		\]
		In particular, for any $M>0$, $\OT_c^\eps$ is debiasable for any $(\mu, \nu)\in \calD^M$ and
		\begin{equation}\label{eq:EOT-kernel-inf}
			\OT^\eps_c(\mu,\nu)=\inf_{z\in \calH_k}\Psi^M_\eps(\mu,z)+\Psi^M_\eps(\nu,z)
		\end{equation}
		for
		\begin{equation}
			\Psi^M_\eps(\mu,z) \coloneqq \sup_{\mu' \in \calD^M_\mu}
			\eps\left(\int_\bbX \log\left(\frac{\de\mu'}{\de\mu}\right)\de\mu
			+\tfrac12|k_{\mu'}|_k^2 - 2(k_{\mu'},z)_k + |z|_k^2 + \tfrac12\right).
		\end{equation}
		If the kernel $k=e^{-c/\eps}$ is furthermore integrally strictly positive definite, then $\OT_c^\eps$ is strictly debiasable.
	\end{theorem}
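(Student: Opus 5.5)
The plan is to derive the first equality from the Lagrangian dual of entropic OT via a change of variables, then obtain the minimax exchange by exhibiting an explicit saddle point. Debiasability and strict debiasability then follow from the resulting inf-representation.

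\textbf{Step 1: dual and change of variables.} I start from the standard dual of \eqref{eq:oteps}:
\[
\OT^\eps_c(\mu,\nu)=\sup_{f,g}\int f\,\de\mu+\int g\,\de\nu-\eps\iint e^{(f(x)+g(y)-c(x,y))/\eps}\,\de\mu\,\de\nu+\eps .
\]
Set $\mu'=e^{f/\eps}\mu$ and $\nu'=e^{g/\eps}\nu$. Then the first two terms become $\eps\int\log(\de\mu'/\de\mu)\,\de\mu$ and its $\nu$-analogue. The reproducing property turns the double integral into $(k_{\mu'},k_{\nu'})_k$, which is finite thanks to the assumptions $\int\sqrt{k(x,x)}\,\de\mu<\infty$ and $\int\sqrt{k(x,x)}\,\de\nu<\infty$.

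Next I use the parallelogram-type identity
\[
\inf_{z\in\calH_k}|z-a|_k^2+|z-b|_k^2=\tfrac12|a-b|_k^2=\tfrac12|a|_k^2+\tfrac12|b|_k^2-(a,b)_k ,
\]
whose infimum is attained at $z=(a+b)/2$. It shows that $\inf_z\calL(\mu',\nu',z)$ is exactly the dual objective. Measures $\mu'$ not absolutely continuous w.r.t.\ $\mu$ give value $-\infty$. This proves $\OT^\eps_c=\sup\inf\calL$. Restricting the sup to $\calD^M_\mu\times\calD^M_\nu$ costs nothing once I show the optimal $\mu^*=e^{f^*/\eps}\mu$ lies in $\calD^M_\mu$. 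The Sinkhorn equations \eqref{eq:sinkhorn-fixed-point} and Jensen, with the normalization $\int f\,\de\mu,\int g\,\de\nu\ge0$, should bound $\int e^{f^*/\eps}\de\mu$ by $|e^{c/\eps}|_{L^1(\mu\otimes\nu)}\le M$; this is where $\calD^M$ enters.

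\textbf{Step 2: minimax exchange (main obstacle).} The inequality $\sup\inf\le\inf\sup$ is free. For the reverse, Sion does not apply, since $\mu'\mapsto\tfrac12|k_{\mu'}|_k^2-2(z,k_{\mu'})_k+\int\log\frac{\de\mu'}{\de\mu}\de\mu$ is not concave. I would instead exhibit a saddle point. Take the optimal $(\mu^*,\nu^*)$ built from the Schr\"odinger potentials and set $z^*=(k_{\mu^*}+k_{\nu^*})/2$. I must then show that $\sup_{\calD^M_\mu\times\calD^M_\nu}\calL(\cdot,\cdot,z^*)$ is attained at $(\mu^*,\nu^*)$. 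For fixed $z^*$ the problem separates in $\mu'$ and $\nu'$.

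The first variation vanishes at $\mu^*$ precisely by the Sinkhorn equations, since $2(k_{\mu^*}+k_{\nu^*})-2k_{\mu^*}=2k_{\nu^*}$ matches the gradient $\eps\,\de\mu/\de\mu'$. For the global statement I would try to prove concavity along segments in $\calD^M_\mu$. The second variation is $-\int h^2(\de\mu/\de\mu')^{2}\,\de\mu+|k_h|_k^2$. I would bound $|k_h|_k^2$ using boundedness of $k$ (as $c$ is bounded below) together with the $L^1$ bound $M$ on densities, which controls the entropic curvature from below. If this direct estimate fails, my fallback is a convex-nonconcave minimax theorem on the weakly compact set $\calD^M_\mu\times\calD^M_\nu$ (compact by a de la Vall\'ee-Poussin/Dunford--Pettis argument), combined with the quadratic structure in $z$.

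\textbf{Step 3: debiasability and strictness.} Splitting the sup in $\inf_z\sup\calL$ into its $\mu'$ and $\nu'$ parts gives \eqref{eq:EOT-kernel-inf} with $\Psi^M_\eps$ as stated, after expanding the squares and distributing the constant $1$ as $\tfrac12+\tfrac12$. This is an inf-representation, so the implication ii)$\Rightarrow$i) of \Cref{prop:tpsd_feat_map} yields debiasability on $\calD^M$.

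For strictness I apply \Cref{corr:strict_tpsd}, with minimizers $z_{\mu,\mu}=k_{\mu^*}$ and $z_{\nu,\nu}=k_{\nu^*}$. These minimizers should be unique, since $z\mapsto\Psi^M_\eps(\mu,z)+\Psi^M_\eps(\nu,z)$ is a sup of $2|z|_k^2$ plus affine terms and hence strictly convex. If they coincide, integral strict positive definiteness makes the kernel mean embedding injective, so $e^{f_\mu/\eps}\mu=e^{f_\nu/\eps}\nu$. The Sinkhorn-equation argument at the end of the proof of \Cref{cor:strict_deb_lse} then gives $f_\mu=f_\nu$ and hence $\mu=\nu$.
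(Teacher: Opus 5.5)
\textbf{Step 2 fails, and it cannot be repaired.} Your Step 1 matches the paper (dual, tilt $\mu'=e^{u/\eps}\mu$, parallelogram identity, Jensen bound placing the tilted optimizers in $\calD^M_\mu\times\calD^M_\nu$), and it correctly gives $\OT^\eps_c=\sup\inf\calL$. The problem is that $(\mu^*,\nu^*)$ is in general \emph{not} a maximizer of $\calL(\cdot,\cdot,z^*)$ for $z^*=\tfrac12(k_{\mu^*}+k_{\nu^*})$. Test your second variation on the scaling direction $\mu'=e^t\mu^*$, $\nu'=\nu^*$. Using $(k_{\mu^*},k_{\nu^*})_k=1$ (the mass of the optimal plan) and writing $a=|k_{\mu^*}|_k^2$, one finds
\[
\frac1\eps\,\frac{\de}{\de t}\,\calL(e^t\mu^*,\nu^*,z^*)=(1-e^t)(1-a\,e^t).
\]
So the second derivative at $t=0$ is $a-1$, which is your formula $-\int h^2(\de\mu/\de\mu')^2\de\mu+|k_h|_k^2$ evaluated at $h=\de\mu^*/\de\mu$. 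By Cauchy--Schwarz, $|k_{\mu^*}|_k^2\,|k_{\nu^*}|_k^2\ge1$. This product is invariant under the reparametrization $(e^{s}\mu^*,e^{-s}\nu^*)$ of the dual optimizers, so for at least one marginal, say $\mu$, we have $a\ge1$. If $a>1$, then $t=0$ is a strict local \emph{minimum} along this line, and the direction $t<0$ never leaves $\calD^M_\mu$. If $a=1$ (always the case when $\mu=\nu$), $\calL$ strictly increases for $t>0$ as long as $e^t\mu^*(\bbX)\le M$. So no bound on $|k_h|_k^2$ via $\sup k$ and $M$ can give concavity. A saddle point of your form exists only in the degenerate case $k_{\mu^*}=k_{\nu^*}$ with both mass constraints active.

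\textbf{The fallback does not work either, and the identity itself is false.} The compactness route is the one the paper follows: minimize $\calG(z)=\sup_{\calD^M_\mu\times\calD^M_\nu}\calL(\cdot,\cdot,z)$, perturb to $z^*+t_nh$, and extract narrow limits of maximizers. This only yields, for each direction $h$, \emph{some} maximizer $p_h$ of $\calL(\cdot,\cdot,z^*)$ with $(\nabla_z\calL(p_h,z^*),h)_k\ge0$. Since $p_h$ depends on $h$, this is just Danskin's condition $0\in\overline{\mathrm{conv}}\{\nabla_z\calL(p,z^*)\}$ over the set of maximizers. Without concavity in $(\mu',\nu')$ you cannot extract a single $p$ with zero gradient. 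In fact the inf--sup identity, and hence \eqref{eq:EOT-kernel-inf}, fails in general. Take $\bbX=\{x_0\}$, $c=0$, $\mu=\nu=\delta_{x_0}$ (which lies in $\calD^2$) and $M=2$. Then $\calD^2_\mu=\{m\delta_{x_0}:0\le m\le2\}$ and every $z\in\calH_k$ has the form $z=\zeta\,k(\cdot,x_0)$. Testing $m=n=1$ and $m=n=2$ gives
\[
\sup_{\calD^2_\mu\times\calD^2_\nu}\calL(\cdot,\cdot,z)\ \ge\ \eps\max\bigl\{2(\zeta-1)^2,\ 2(\zeta-2)^2+2\log2-3\bigr\}.
\]
The right-hand side is bounded below by a positive constant: the first term vanishes only at $\zeta=1$, where the second equals $2\log2-1>0$. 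Yet $\OT^\eps_c(\delta_{x_0},\delta_{x_0})=0$.

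\textbf{What survives.} Debiasability itself follows directly from your Step 1, with no minimax. Take the symmetric self-transport potentials $u_\mu,u_\nu$ and set $\mu'=e^{u_\mu/\eps}\mu$, $\nu'=e^{u_\nu/\eps}\nu$. The Sinkhorn equations give $|k_{\mu'}|_k=|k_{\nu'}|_k=1$, hence $(k_{\mu'},k_{\nu'})_k\le1$. Weak duality then yields $\OT^\eps_c(\mu,\nu)\ge\int u_\mu\,\de\mu+\int u_\nu\,\de\nu=\tfrac12\OT^\eps_c(\mu,\mu)+\tfrac12\OT^\eps_c(\nu,\nu)$, with equality only if $k_{\mu'}=k_{\nu'}$. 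Your Step 3 argument (injectivity of the embedding, then the Sinkhorn equations) then gives $\mu=\nu$, i.e.\ strict debiasability. This avoids $\calD^M$ and compactness entirely, but it does not produce the inf-representation \eqref{eq:EOT-kernel-inf}.
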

	If $c$ is also bounded from above, or if the space is compact, one can simply take the uniform constant $M=e^{\frac{|c|_\infty}{\eps}}$. Having a compact $\bbX$ is in particular the setting considered in \cite{feydy2019interpolating,sejourne2019sinkhorn}, which \Cref{thm:EOT_debias_kernel_compact} extends by allowing us to deal with more general costs. The parameter $M$ provides indeed the extra flexibility to accommodate unbounded costs on noncompact spaces, such as squared distances. We stress also that $c$ is not required to give rise to a universal kernel $k$, broadening further the spectrum of admissible costs when not requiring strict debiasability. On the other hand, unlike the aforementioned works, we did not tackle here the interesting question of the convexity of the debiased entropic optimal transport cost.
	
	The condition $\int \sqrt{k(x,x)}\de \mu(x)<+\infty$ is to make sure that, since $\mu' \le M\mu$ for $\mu'\in \calD^M_\mu$ when tested on nonnegative functions, $\mu'$ can be embedded in $\calH_k$ (see the discussion in \cite[Section 2.1]{Ziegel2024}).

		Let us highlight that the decompositions \eqref{eq:eot_c_eps_lambda_neg_def} and \eqref{eq:EOT-kernel-inf} may seem different in nature: the former works in the space of probability measures on $\calZ$ using an inf-representation $(\tilde\psi,\calZ)$ of $c$, while the latter decomposes $\OT^\eps_c$ over the flat RKHS $\calH_k$. When $c$ is negative definite, the completely positive representation \eqref{eq:completely_pos_kernel} yields an isometric embedding $\iota:\calH_k\to L^2(\calZ,\lambda)$ defined by $k(x,\cdot)\mapsto\rho_\eps(x,\cdot)$, since
		\[
		(k(x,\cdot),k(y,\cdot))_k=k(x,y)=\int_\calZ\rho_\eps(x,z)\rho_\eps(y,z)\,\de\lambda(z)=(\rho_\eps(x,\cdot),\rho_\eps(y,\cdot))_{L^2(\lambda)}.
		\]
		Under $\iota$, the kernel mean embedding $k_{\mu'}$ of $\mu'\in\calM_+(\bbX)$ maps to the function $z\mapsto\int_\bbX\rho_\eps(x,z)\,\de\mu'(x)$, with all RKHS norms and inner products preserved. As obtained in the proof below,  the optimal $z^*$ for \eqref{eq:EOT-kernel-inf} satisfies
		\begin{equation}\label{eq:z_star_midpoint}
			z^*=\tfrac{1}{2}\bigl(k_{\mu^{*'}}+k_{\nu^{*'}}\bigr),
		\end{equation}
		the \emph{arithmetic mean} in $\calH_k$ of the kernel mean embeddings of the optimally tilted measures $\mu^{*'},\nu^{*'}$. By contrast, the optimality condition for $\eta^*$ in \eqref{eq:eot_c_eps_lambda_neg_def} gives $\frac{\de\eta^*}{\de\lambda}\propto\exp\!\bigl(-\frac{v^*(z)+u^*(z)}{\eps}\bigr)$, a \emph{geometric-mean} density involving the dual potentials $v^*,u^*$ of the two sub-problems $\OT^\eps_{\psi}$ (see \Cref{sec:baryc_interpolation}). Both achieve the value $\OT^\eps_c(\mu,\nu)$ nevertheless.
	
	\begin{proof} 
		We proceed in three steps:(i)~pass to the dual and change variables to measures via $\mu'=e^{u/\eps}\mu$, (ii)~give an inf-representation of the inner product via \eqref{eq:inner_product_identity} to write a min-max problem, (iii)~apply a nonstandard minimax argument exploiting weak compactness in $\calM_+(\bbX)$.
		
		\medskip
		
		By \cite[Theorem 3.2]{Nutz2021IntroductionTE}, we have strong duality with the dual problem of $\OT^\eps_c(\mu,\nu)$ given by
		\[
		\max_{u\in L^1(\bbX,\mu), v \in L^1(\bbX,\nu)} \int_\bbX u \, \de\mu + \int_\bbX v \, \de\nu - \eps \int_\bbX \int_\bbX e^{(u(x) + v(y) - c(x, y))/\eps} \de\mu(x) \de\nu(y) + \eps.
		\]
		Let $(\bar u, \bar v)$ be optimal dual potentials as defined in \eqref{eq:sinkhorn-fixed-point} and chosen such that $\int_{\bbX} \bar u\de \mu,\int_{\bbX} \bar v \de \nu\ge 0$. Using Jensen's inequality, we have
		\begin{equation}\label{eq:bound_sinkhorn}
			\bar u(x)=-\eps\log\left(\int_{\bbX} e^{\frac{\bar v(y)-c(x,y)}{\eps}}\de\nu(y)\right) \le \int_{\bbX} (-\bar v(y)+c(x,y))\de\nu(y)\le \int_\bbX c(x,y)\de\nu(y)
		\end{equation}
		and, using the above and Jensen's inequality again,
		\begin{equation}\label{eq:bound_sinkhorn_M}
			\int_{\bbX} e^{\frac{\bar u(x)}{\eps}}\de\mu(x) \stackrel{\eqref{eq:bound_sinkhorn}}{\le} \int_{\bbX} e^{\int_X \frac{c(x,y)}{\eps}\de\nu(y)} \de\mu(x)\le \int_{\bbX\times \bbX} e^{\frac{c(x,y)}{\eps}}\de (\mu\otimes\nu)(x,y)\le M,
		\end{equation}
		and similarly for $v$. 
		Hence, we can consider the change of variables $\mu' = e^{u/\eps}\mu$ and $\nu' = e^{v/\eps}\nu$, and the supremum can be taken on the set $\calD^M_\mu \times  \calD^M_\nu$ defined above.
		Note in particular that by \eqref{eq:bound_sinkhorn_M} the total mass of $\mu'$ and $\nu'$ is bounded and by Prokhorov's theorem, $\calD^M_\mu \times  \calD^M_\nu$ is compact for the narrow topology. Indeed, let $\mu'_k$ be any sequence in $\calD^M_\mu$. Since $\mu$ is tight, for any $\delta>0$, there exists $E\subset \bbX$ such that $\mu(E)\le \frac{\delta}{M}$ and
		\[
		\mu'_k(E)=\int_E \frac{\de\mu'(\bbX)}{\de\mu} \de\mu\le M\mu(E)\le \delta
		\]
		so that the sequence is tight. In terms of the new variables, we have
		\[
		\int_\bbX \int_\bbX e^{-c(x, y)/\eps} \de\mu'(x) \de\nu'(y) = \int_\bbX \int_\bbX k(x, y) \de\mu'(x) \de\nu'(y) = (k_{\mu'}, k_{\nu'})_k,
		\]
		where $(\cdot, \cdot)_k$ is the inner product in the RKHS $\calH_k$. For any $\mu',\nu'$ in $\calM_+(\bbX)$ with embeddings $k_{\mu'}$, $ k_{\nu'}$ in $\calH_k$, we have the usual identity:
		\begin{equation}\label{eq:inner_product_identity}
			-(k_{\mu'}, k_{\nu'})_k = \inf_{z \in \calH_k} | z- k_{\mu'}  |_k^2 + | z- k_{\nu'}  |_k^2 - \frac{1}{2} | k_{\mu'} |_k^2 - \frac{1}{2} | k_{\nu'}|_k^2.
		\end{equation}
		We define then the Lagrangian functional $\calL$ as in \eqref{eq:ker-lagrangian}. This is clearly convex w.r.t.\ to $z$ since $k$ is psd.
		The terms $\int \log\left(\frac{\de\mu'}{\de\mu}\right)\de\mu$, morally negative KL divergences, are concave but since the squared norms $|k_\mu'|^2,|k_\nu'|^2$ are convex, the Lagrangian is not necessarily concave w.r.t.\ $\mu',\nu'$. The core of the proof is to commute sup and inf despite lacking concavity, which is compensated by the compactness of the considered sets of measures.%

		Let us define the functional $\calG(z) \coloneqq \sup_{(\mu',\nu')\in\calD^M_\mu \times  \calD^M_\nu} \calL(\mu', \nu', z)$.
		As a supremum of $\eps$-strongly convex and lower semicontinuous functionals, $\calG(\cdot)$ is $\eps$-strongly convex, whence coercive, and lower semicontinuous, implying the existence of a unique minimizer $z^*$. Now, since $k$ is continuous and bounded (as $c$ is lower bounded), the RKHS norm is continuous in the narrow topology of measures.
		The functional $\mu'\mapsto\int \log(\frac{\de\mu'}{\de\mu})\de\mu$, morally a negative $\KL$, is not upper semicontinuous along any narrowly converging subsequence, but this is true at least along maximizing sequences (for which $\int_\bbX \log\left(\frac{\de\mu'_n}{\de\mu}\right) \de\mu$ is lower bounded), since the only points where upper semicontinuity can fail are those where it attains $-\infty$. Thus, the mapping 
		$(\mu', \nu') \mapsto \calL(\mu', \nu', z)$ is upper semicontinuous along maximizing sequences. The set
		$\calD^M_\mu \times  \calD^M_\nu$ being a weakly compact subset of $\calM_+(\bbX)^2$, a maximizer exists for any $z$. Let $D(z) \coloneqq \arg\max_{(\mu', \nu') \in \calD^M_\mu \times  \calD^M_\nu} \calL(\mu', \nu', z)$. 
		
		We want to show that for the point $z^*$, there exists a maximizer $(\mu^*, \nu^*) \in D(z^*)$ such that $\nabla_z \calL(\mu^*, \nu^*, z^*) = 0$, yielding the saddle point.
		Choose an arbitrary direction $h \in \calH_k$ and consider the sequence $z_n = z^* + t_n h$ for a sequence of scalars $t_n \downarrow 0$. 
		For each $n$, pick $(\mu_n', \nu_n') \in D(z_n)$. Because $\calD^M_\mu \times  \calD^M_\nu$ is compact, we can extract a subsequence (still denoted by $n$) such that $(\mu_n', \nu_n') \rightharpoonup (\mu^*, \nu^*)$ according to the narrow topology.
		For any $(\mu'', \nu'') \in \calD^M_\mu \times  \calD^M_\nu$, we have by definition:
		\[
		\calL(\mu_n', \nu_n', z_n) \ge \calL(\mu'', \nu'', z_n).
		\]
		Passing to the limit, the upper semicontinuity w.r.t.\ $\mu',\nu'$ and the continuity w.r.t.\ $z$ imply on the one hand $\limsup_{n \to \infty} \calL(\mu_n', \nu_n', z_n) \le \calL(\mu^*, \nu^*, z^*)$, whereas on the other hand, we have $\lim_{n \to \infty} \calL(\mu'', \nu'', z_n) = \calL(\mu'', \nu'', z^*)$. Thus:
		\[
		\calL(\mu^*, \nu^*, z^*) \ge \calL(\mu'', \nu'', z^*) \quad \forall (\mu'', \nu'') \in \calD^M_\mu \times  \calD^M_\nu,
		\]
		which shows $(\mu^*, \nu^*) \in D(z^*)$ and gives the first saddle point inequality.
		Because $z^*$ minimizes $\calG$, we have $\calG(z_n) - \calG(z^*) \ge 0$. 
		By definition, $\calG(z_n) = \calL(\mu_n', \nu_n', z_n)$ and $\calG(z^*) \ge \calL(\mu_n', \nu_n', z^*)$. Therefore:
		\[
		\calL(\mu_n', \nu_n', z_n) - \calL(\mu_n', \nu_n', z^*) \ge \calG(z_n) - \calG(z^*) \ge 0.
		\]
		As $z_n = z^* + t_n h$, using the convexity of the differentiable functional $\calL(\mu_n', \nu_n', \cdot)$ at $z_n$ and dividing by $t_n > 0$, we obtain
		\[
		\begin{aligned}
			\calL(\mu_n', \nu_n', z_n)+(\nabla_z \calL(\mu_n', \nu_n', z_n), z^*-z_n)_k &\le \calL(\mu_n', \nu_n', z^*)\\
			0 \le \frac{\calL(\mu_n', \nu_n', z^* + t_n h) - \calL(\mu_n', \nu_n', z^*)}{t_n} &\le (\nabla_z \calL(\mu_n', \nu_n', z_n), h)_k\\
			&=\eps(4z_n - 2(k_{\mu_n'} + k_{\nu_n'}),h)_k
		\end{aligned}
		\]
		and by continuity of the inner product $(\eps(4z^* - 2(k_{\mu^* }+ k_{\nu^* })), h)_k \ge 0$.
		Since $h \in \calH_k$ is arbitrary, $\nabla_z \calL(\mu^*, \nu^*, z^*) = 0$ and $z^* = \frac{1}{2}(k_{\mu^* }+ k_{\nu^* })$.
		Because $z \mapsto \calL(\mu^*, \nu^*, z)$ is strictly convex, $z^*$ is its unique global minimizer:
		\[
		\calL(\mu^*, \nu^*, z^*) = \inf_{z \in \calH_k} \calL(\mu^*, \nu^*, z) \le \calL(\mu^*, \nu^*, z), \quad \forall z \in \calH_k.
		\]
		The triple $(\mu^*, \nu^*, z^*)$ is consequently a saddle point, which gives the minimax equality.

		To show strict debiasability, we use \Cref{corr:strict_tpsd} and, denoting by $z^*_\mu,z^*_\nu$ the minimizers for $\OT^\eps_c(\mu,\mu)$ and $\OT^\eps_c(\nu,\nu)$, we aim to show that $z^*_\mu=z^*_\nu$ implies $\mu=\nu$. 
		Since $z^* = \frac{1}{2}(k_{\mu' }+ k_{\nu'})$ for any $(\mu',\nu')$, we have $z^*_\mu=\int k(\cdot,y) e^{\bar u_\mu(y)/\eps}\de\mu(y)$ with $\bar u_\mu$ the optimal dual potential for $\OT^\eps_c(\mu,\mu)$, and analogously for $z^*_\nu$.
		Proceeding as in the end of \Cref{cor:strict_deb_lse}, by the Sinkhorn's equations for the self-transports we get
		\[
		e^{-\frac{\bar u_\mu(x)}{\eps}}=\int_\bbX k(x,y) e^{\frac{\bar u_\mu(y)}{\eps}}\de\mu(y)=\int_\bbX k(x,y) e^{\frac{\bar u_\nu(y)}{\eps}}\de\nu(y)=e^{-\frac{\bar u_\nu(x)}{\eps}},
		\]
		which implies $\bar u_\mu=\bar u_\nu$. Since the kernel is assumed integrally strictly positive definite on $\calM_+(\bbX)$, the kernel embedding is injective on $\calM_+(\bbX)$ and the equality above allows to conclude that $\mu=\nu$.
	\end{proof}

	\section{Debiasing entropic OT, $\eps=+\infty$}

	What about the case $\eps =+\infty$? For a debiasable $c$, we have one straightforward upper bound which provides a candidate inf-representation:
	\begin{multline}\label{eq:ot-explicit_infty}
		\OT^\infty_{c_{}}(\mu,\nu) \coloneqq \int_{\bbX\times \bbX} c(x,y) \de \mu(x) \de\nu(y) \le \inf_{\eta\in \calP(\calZ)} \Psi^\infty(\mu,\eta)+\Psi^\infty(\nu,\eta)\\ \text{ with } \Psi^\infty(\mu,\eta) \coloneqq \int_{\bbX\times \calZ} \psi(x,z) \de \mu(x) \de\eta(z).
	\end{multline}
	This is just a consequence of the inf-representation of $c$ as, for all $z\in \calZ$,
	\begin{equation*}
		\int_{\bbX\times \bbX} c(x,y) \de \mu(x) \de\nu(y)\le \int_{\bbX\times \bbX} (\psi(x,z)+\psi(y,z)) \de \mu(x) \de\nu(y),
	\end{equation*}
	we then integrate over $\eta\in \calP(\calZ)$ and minimize over $\eta$. However equality does not hold in \eqref{eq:ot-explicit_infty} for any debiasable $c$ and all $\mu,\nu$. One can instead provide a full characterization of when debiasability occurs. A similar result to ours was coincidentally stated in \cite[Theorem 4.1]{houry2026gromovwassersteinscalesquarednorms}, but using the same boundedness assumption as in \cite{feydy2019interpolating}, which we discarded.

	\begin{theorem}\label{lem:eps_infty_neg_def} Let $c:\bbX\times \bbX \to \R$ with $\bbX$ a Polish space. Then $c$ is negative definite if and only if $\OT^\infty_{c}$ is debiasable for all $\mu,\nu$ such that $\int_{\bbX\times \bbX} |c(x,y)|\de \xi(x) \de \xi(y)$ exists and is finite for $\xi \in\{\mu,\nu,\mu-\nu\}$, i.e.\
		\begin{align}
			\operatorname{S}^\infty_c(\mu,\nu)\coloneqq&\OT^\infty_{c}(\mu,\nu)-\frac12\OT^\infty_{c}(\mu,\mu)-\frac12\OT^\infty_{c}(\nu,\nu)\nonumber \\
			=& -\frac{1}{2}\int_{\bbX\times \bbX} c(x,y)\de (\mu-\nu)(x) \de (\mu-\nu)(y) \ge 0. \label{eq:s_eps_infty}
		\end{align}

		Moreover, for $c$ negative definite, setting $k(x,y)\coloneqq (\phi(x),\phi(y))_{H_c}$, for any $\mu,\nu$ such that $\int k(x,x)\de \mu(x)$ and $\int k(x,x) \de \nu(x)<+\infty$, we have 
		\begin{equation}\label{eq:ot-explicit_infty_thm}
			\OT^\infty_{c_{}}(\mu,\nu) \coloneqq \int_{\bbX\times \bbX} c(x,y) \de \mu(x) \de\nu(y) = \inf_{\eta\in \calP(\calZ)} \Psi^\infty(\mu,\eta)+\Psi^\infty(\nu,\eta),
		\end{equation}
		with equality in \eqref{eq:ot-explicit_infty} at the optimal $\eta(z)=\delta_{m_{\mu,\nu}}$ where $m_{\mu,\nu}=\int_{\bbX}\phi(x)\de \frac{\mu+\nu}{2}(x)\in H_c$. 
		
		Consequently $\operatorname{S}^\infty_c$ is strictly debiasable if and only if $k(x,y)$ is integrally strictly positive definite on $\calM_+(\bbX)$.
	\end{theorem}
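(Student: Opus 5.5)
The proof has three parts: the equivalence, the inf-representation \eqref{eq:ot-explicit_infty_thm}, and strict debiasability.

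\emph{Equivalence.} By bilinearity of $(\mu,\nu)\mapsto\int c\,\de\mu\,\de\nu$ and symmetry of $c$, the integrability assumptions give the identity \eqref{eq:s_eps_infty}:
\[
\operatorname{S}^\infty_c(\mu,\nu)=-\tfrac12\int c\,\de(\mu-\nu)\,\de(\mu-\nu).
\]
If $c$ is negative definite, I use \Cref{prop:def_rep_kernels} to write $c(x,y)=s(x)+s(y)+|\phi(x)-\phi(y)|^2_{H_c}$. Since $\xi=\mu-\nu$ has zero total mass, the $s$ terms integrate to zero, and expanding the square gives
\[
-\tfrac12\int c\,\de\xi\,\de\xi=\int\!\!\int(\phi(x),\phi(y))_{H_c}\,\de\xi\,\de\xi .
\]
For discrete $\xi$ this equals $\bigl|\sum a_i\phi(x_i)\bigr|^2\ge0$ directly. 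For general $\xi$ I would rather write it as $\bigl|\int\phi\,\de\xi\bigr|^2_{H_c}$ (a Bochner integral) once integrability of $|\phi|$ is available. If it is not, I would argue by approximating $\xi$ with finitely supported zero-mass measures, using the integrability of $c$ with respect to $\xi\otimes\xi$. I expect this measure-theoretic justification to be the main obstacle. The converse is immediate: take $\mu=\sum_{a_i>0}a_i\delta_{x_i}/A$ and $\nu=\sum_{a_i<0}|a_i|\delta_{x_i}/A$ with $A=\sum_{a_i>0}a_i$, which equals $\sum_{a_i<0}|a_i|$ when $\sum a_i=0$. Then \eqref{eq:s_eps_infty} yields $\sum a_ia_jc(x_i,x_j)\le0$.

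\emph{Inf-representation.} I take $\calZ=H_c$ and $\psi(x,z)=s(x)+2|\phi(x)-z|^2_{H_c}$. This is an inf-representation of $c$ by the parallelogram identity $2|a-z|^2+2|b-z|^2=|a-b|^2+4|z-\tfrac{a+b}2|^2$, which is also the Hilbert case of the midpoint examples. For $\eta\in\calP(H_c)$, the same identity integrated against $\mu\otimes\nu$, under the hypothesis $\int k(x,x)\de\mu<\infty$, gives
\[
\Psi^\infty(\mu,\eta)+\Psi^\infty(\nu,\eta)=\int c\,\de\mu\,\de\nu+4\int |z-m_{\mu,\nu}|^2\,\de\eta(z)+R,
\]
where the residual is
\[
R=2\int|\phi-m_\mu|^2\de\mu+2\int|\phi-m_\nu|^2\de\nu-\int\!\!\int|\phi(x)-\phi(y)|^2\,\de\mu\,\de\nu+\dots
\]
Carrying out the bias–variance expansion, I expect $R=0$ exactly, which I would verify by expanding in $(\cdot,\cdot)_{H_c}$ with means $m_\mu,m_\nu$. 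The infimum is then attained at $\eta=\delta_{m_{\mu,\nu}}$, giving \eqref{eq:ot-explicit_infty_thm}. If $R$ turns out not to vanish, I would first re-check the choice of the factor $2$ in $\psi$. I would also double-check that the lower bound $\int c\,\de\mu\,\de\nu$ is consistent with the general upper bound \eqref{eq:ot-explicit_infty}.

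\emph{Strictness.} This follows from the identity $\operatorname{S}^\infty_c(\mu,\nu)=|k_{\mu}-k_{\nu}|^2$ in the RKHS of $k$, equivalently $\iint k\,\de\xi\,\de\xi$ with $\xi=\mu-\nu$. It vanishes for some $\mu\neq\nu$ exactly when $k$ fails to be integrally strictly positive definite on such differences. To phrase this on $\calM_+(\bbX)$, I would use the normalization by total mass, as in \Cref{corr:strict_tpsd}.
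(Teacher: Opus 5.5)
Your equivalence and strictness arguments are sound and match the paper in substance. The paper gets $\operatorname{S}^\infty_c\ge 0$ through an inf-representation and \Cref{prop:tpsd_feat_map}, while you get it directly from $|\int\phi\,\de\xi|^2_{H_c}$; the paper also simply assumes $\int|\phi|^2_{H_c}\de\mu,\int|\phi|^2_{H_c}\de\nu<\infty$ at that point, so your approximation detour is not needed to match it.

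\emph{The gap: the residual does not vanish.} The genuine gap is in the inf-representation step: $R\neq 0$. Write $m_\mu\coloneqq\int\phi\,\de\mu$ and complete your bias--variance expansion with $\psi(x,z)=s(x)+2|\phi(x)-z|^2_{H_c}$. This gives
\[
\Psi^\infty(\mu,\eta)+\Psi^\infty(\nu,\eta)=\int c\,\de\mu\,\de\nu+4\int|z-m_{\mu,\nu}|^2_{H_c}\,\de\eta(z)+V_\mu+V_\nu,\qquad V_\mu\coloneqq\int|\phi-m_\mu|^2_{H_c}\,\de\mu .
\]
So $R=V_\mu+V_\nu$, which is strictly positive unless $\phi$ is $\mu$-a.s.\ and $\nu$-a.s.\ constant. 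The infimum over $\eta$ therefore overshoots $\OT^\infty_c(\mu,\nu)$ by exactly the two variances.

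Re-checking the factor $2$ cannot help, since it is forced by $\inf_z\psi(x,z)+\psi(y,z)=c(x,y)$. Worse, no pointwise inf-representation $\psi$ can give equality with a $\Psi^\infty$ that is linear in $\mu$. Take $\mu=\nu=\tfrac12(\delta_a+\delta_b)$. Then $\inf_\eta 2\Psi^\infty(\mu,\eta)=\inf_z\psi(a,z)+\psi(b,z)=c(a,b)$. On the other hand $\OT^\infty_c(\mu,\mu)=c(a,b)-\tfrac12 c_0(a,b)$, which is strictly smaller as soon as $c_0(a,b)>0$, e.g.\ for $c(x,y)=|x-y|^2$ and $a\neq b$.

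\emph{The missing idea.} The representation must be nonlinear in the measure, subtracting exactly this variance. The paper does this through mean embeddings, applying \eqref{eq:inner_product_identity} to $m_\mu,m_\nu\in H_c$:
\[
\Psi(\mu,z)\coloneqq\int s\,\de\mu+\int|\phi|^2_{H_c}\de\mu-|m_\mu|^2_{H_c}+2|z-m_\mu|^2_{H_c}=\int\psi(x,z)\,\de\mu(x)-V_\mu .
\]
Then $\Psi(\mu,z)+\Psi(\nu,z)=\OT^\infty_c(\mu,\nu)+4|z-m_{\mu,\nu}|^2_{H_c}$. Hence the infimum over $z\in H_c$ is attained at $m_{\mu,\nu}$ and equals $\OT^\infty_c(\mu,\nu)$. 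Equivalently, the infimum over $\eta\in\calP(H_c)$ of the linear extension $\eta\mapsto\int[\Psi(\mu,z)+\Psi(\nu,z)]\,\de\eta(z)$ is attained at $\delta_{m_{\mu,\nu}}$. \Cref{prop:tpsd_feat_map} then also yields debiasability.

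This is the only sense in which \eqref{eq:ot-explicit_infty_thm} can hold, and it is how the paper's proof concludes. With $\Psi^\infty$ built linearly from a pointwise $\psi$ as in \eqref{eq:ot-explicit_infty}, your own computation shows that the inequality there is strict in general.
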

	Many kernels, e.g.\ Gaussian or Laplacian, and more generally most $c$ based on $L_p$-metrics, are integrally strictly positive definite, which is usually shown through the often equivalent property of universality of the kernel \cite{sriperumbudur11a,Ziegel2024}.

	\begin{proof} 
		
		To show the ``if'' part, we start from \eqref{eq:s_eps_infty}. Let $(x_i,a_i)_{i=1,\dots, N}\in (\bbX\times \R)^N$ with $\sum_{i=1}^N a_i=0$.  We can assume w.l.o.g.\ that $a_1\neq 0$, as otherwise $a_i\equiv 0$ and there is nothing to show. Set $A_+=\{i \, | \, a_i >0\}$ and $A_-=\{i \, | \, a_i <0\}$, we must have $M=\sum_{i\in A_+} a_i=-\sum_{i\in A_-} a_i >0$. Setting $\mu=\frac1M \sum_{i\in A_+} a_i \delta_{x_i}$ and $\nu=-\frac{1}{M} \sum_{i\in A_-} a_i \delta_{x_i}$, the negative definite inequality is precisely \eqref{eq:s_eps_infty} which we assumed.
		
		To show the ``only if'' part, since $c$ is negative definite, owing to \Cref{prop:def_rep_kernels}, there exists a Hilbert space $H_c \subset \R^\bbX$, a mapping $\phi:\bbX \to H_c$ and a function $f:\bbX\to \R$ such that $c(x,y)=f(x)+f(y)+|\phi(x)-\phi(y)|^2_{H_c}$. W.l.o.g.\ we can work directly with $f=0$ as the $f$-term vanishes in \eqref{eq:s_eps_infty}. We write
		\begin{multline*}
			\int_{\bbX\times\bbX} |\phi(x)-\phi(y)|^2_{H_c} \de\mu(x)\de\nu(y)\\
			\begin{aligned}
				&=\int_\bbX |\phi(x)|^2_{H_c}\de\mu(x)+\int_\bbX |\phi(y)|^2_{H_c}\de\nu(y)-2\int_{\bbX\times\bbX} \left\langle \phi(x),\phi(y)\right\rangle_{H_c} \de\mu(x)\de\nu(y)\\
				&=\int_\bbX |\phi(x)|^2_{H_c}\de\mu(x)+\int_\bbX |\phi(y)|^2_{H_c}\de\nu(y)-2\langle \int_\bbX\phi(x)\de\mu(x),\int_\bbX\phi(y)\de\nu(y)\rangle_{H_c}\\
				&\stackrel{\eqref{eq:inner_product_identity}}{=}\int_\bbX |\phi(x)|^2_{H_c}\de\mu(x)+\int_\bbX |\phi(y)|^2_{H_c}\de\nu(y)\\
				&\hspace{1cm}+2\inf_{z\in H_c} \Big|z-\int_\bbX\phi(x)\de\mu(x)\Big|^2+\Big|z-\int_\bbX\phi(y)\de\nu(y)\Big|^2-\frac12\Big|\int_\bbX\phi(x)\de\mu(x)\Big|^2-\frac12\Big|\int_\bbX\phi(y)\de\nu(y)\Big|^2\\
				&=\inf_{z\in H_c} \Psi(\mu,z)+\Psi(\nu,z),
			\end{aligned}
		\end{multline*}
		for $\Psi(\mu,z)= \int_\bbX |\phi(x)|^2_{H_c}\de\mu(x)+2\left|z-\int_\bbX\phi(x)\de\mu(x)\right|^2-\left|\int_\bbX\phi(x)\de\mu(x)\right|^2$, where the minimizer is $z^*=\frac12 \left(\int_\bbX\phi(x)\de\mu(x)+\int_\bbX\phi(y)\de\nu(y)\right)$. We conclude that $\OT_c^\infty$ is debiasable thanks to Proposition \ref{prop:tpsd_feat_map}. Note that the above decomposition coincides with the r.h.s.\ of \eqref{eq:ot-explicit_infty}. In fact, we can write
		\[
		\OT_{c_{}}^\infty(\mu,\nu)=\inf_{z\in H_c}\Psi(\mu,z)+\Psi(\nu,z)=\inf_{\eta\in\calP(H_c)} \int_{H_c}\left[\Psi(\mu,z)+\Psi(\nu,z)\right] \de\eta(z)
		\]
		and developing we obtain
		\[
		\OT_{c_{}}^\infty(\mu,\nu)=\inf_{\eta\in \calP(H_c)} \Psi^\infty(\mu,\eta)+\Psi^\infty(\nu,\eta).
		\]

		The strict part is just noticing that $\int_{\bbX\times \bbX} k(x,y)\de (\mu-\nu)(x) \de (\mu-\nu)(y) > 0$ for $\mu\neq\nu$ is precisely the definition of an integrally strictly positive definite kernel.
		
	\end{proof}

	\section{A barycentric formulation for entropic interpolation}\label{sec:baryc_interpolation}

    To obtain several of our midpoint formulas, we used a parallelogram identity, e.g.\  \eqref{eq:condition-const_2} and \eqref{eq:inner_product_identity}, for squared Hilbertian distances over Hilbert spaces $H$. The latter actually holds for more general interpolations than midpoints, indeed for any $t\in(0,1)$, $x,y,z\in H$, we have
	\begin{equation}\label{eq:parallelogram_law}
		\frac{1}{t(1-t)}\left|z- (1-t)x-ty\right|^2=\frac{|z-x|^2}{1-t}+\frac{|z-y|^2}{t}-|x-y|^2.
	\end{equation}
	
	\begin{example}[Entropic displacement interpolation]\label{ex:entropic-interpolation}
		Consider again the squared Euclidean distance cost on $\R^d$ as done in \Cref{ex:computation_midpoint_Gaussian} for the midpoint. From \eqref{eq:parallelogram_law}, we can write $|x-y|^2=\inf_{z\in\R^d} \frac{|x-z|^2}{t}+\frac{|y-z|^2}{1-t}$, for any $t\in(0,1)$. Then, defining similarly to \eqref{eq:lse-cost} the cost $c^t_{\eps,\lambda}(x,y) \coloneqq -\eps \log \int_{\R^d} e^{-\frac{\psi_t(x,z)+\psi_{1-t}(y,z)}{\eps}}\de\lambda(z)$, for $\psi_t(x,z)\coloneqq\frac{|z-x|^2}{t}$ , the same proof as in \Cref{thm:EOT_debiased} provides
		\begin{equation}\label{eq:ceps_Euclidean}
		    \OT_{c^t_{\eps,\lambda}}^\eps(\mu,\nu)
		=\inf_{\eta\in\calP({\R^d})}  \OT^\eps_{\psi_{t}}(\mu,\eta)+\OT^\eps_{\psi_{1-t}}(\nu,\eta)+ \eps\,\KL(\eta|\lambda).
		\end{equation}
		In this case, using \eqref{eq:parallelogram_law}, we have
		\[
		\int_{\R^d} e^{-\frac{\psi_{t}(x,z)+\psi_{1-t}(y,z)}{\eps}}\de\lambda(z)=e^{-\frac{|x-y|^2}{\eps}}\int_{\R^d} e^{-\frac{|z-(1-t)x-ty|^2}{ t(1-t) \eps}}\de\lambda(z)=e^{-\frac{|x-y|^2}{\eps}}\big(t(1-t)\pi\eps\big)^{\frac{d}{2}}
		\]
		and therefore
		\begin{multline}\label{eq:eta_t_Gaussien}
			\OT_c^\eps(\mu,\nu)
			=\inf_{\eta\in\calP({\R^d})}  \OT^\eps_{\psi_{t}}(\mu,\eta)+\OT^\eps_{\psi_{1-t}}(\nu,\eta)+ \eps\,\KL(\eta|\lambda)-\frac{d\eps}{2}\log(t(1-t)\pi\eps)\\
			=\inf_{\eta\in\calP({\R^d})}  \frac{1}{t}\OT^{\eps t}_{\psi}(\mu,\eta)+\frac{1}{1-t}\OT^{\eps (1-t)}_{\psi}(\nu,\eta)+ \eps\,\KL(\eta|\lambda)-\frac{d\eps}{2}\log(t(1-t)\pi\eps).
		\end{multline}
       For $\eps=0$, we have thus recovered the classical barycentric formulation of the geodesic interpolation according to the Wasserstein distance $W_2^2$. Hence, denoting the minimizer by $\eta_t$, this coincides with McCann's interpolant.

     For $\eps>0$, the above decomposition then provides a generalization of the barycentric formulation to an entropic interpolant. We define again the curve $t\mapsto \eta_t$ which assigns to a given $t\in(0,1)$ the optimal measure $\eta_t$, the latter being unique by strict convexity of $\eta\mapsto\KL(\eta|\lambda)$). In the case of \Cref{ex:computation_midpoint_Gaussian} with squared Euclidean cost, for $\mu=\delta_{x}$, $\nu=\delta_{y}$ this provides the curve $\eta_t=\frac{1}{(2\pi)^{d/2} \sigma_t^d}e^{-\frac{|z-m_t|^2}{2\sigma_t^2}}$ with $m_t=(1-t)x+ty$ and $\sigma_t=\sqrt{\frac{\eps t(1-t)}{2}}$. These are the marginals of a Brownian bridge. More generally, we now draw the connection with Schrödinger bridges.
	\end{example}
    
The \emph{Schr\"odinger bridge problem} is the stochastic counterpart of optimal transport \cite{Leonard2014}: given a reference path measure $R$, one seeks the path measure $P$ minimizing $\KL(P|R)$ subject to $P_0=\mu$, $P_1=\nu$. Although other choices of $R$ with a drift are possible \cite{Gentil2017}, here we take as canonical process $R$ a Brownian motion on $\R^d$ with generator $\frac{\eps}{2}\Delta$. Denote by
\begin{equation}\label{eq:heat-kernel}
    p_t(x,y)\coloneqq(\pi\eps t)^{-d/2}\exp\left(-\frac{|x-y|^2}{t\eps}\right), \qquad t\in(0,1],
\end{equation}
the Markov transition kernel of this process, so that the Chapman-Kolmogorov equation reads, for all $x,y\in\R^d$ and $t\in(0,1)$,
\begin{equation}\label{eq:CK}
    \int_{\R^d} p_t(x,z)\,p_{1-t}(z,y)\,\de z = p_1(x,y).
\end{equation}
By the classical theory of the Schr\"odinger problem \cite[Section~2]{Leonard2014}, for $\mu,\nu\in\calP(\R^d)$ with finite second moment the minimizer $P^*$ exists, is unique, is Markov, and its endpoint law $\pi^*\coloneqq(P^*_0,P^*_1)$ coincides with the (unique) optimal plan of the static problem $\OT_c^\eps(\mu,\nu)$, for $c(x,y)=|x-y|^2$, given by $\pi^*(x,y)=e^{\frac{u(x)+v(y)-|x-y|^2}{\eps}}\de\mu(x)\de\nu(y)$, where $u$ and $v$ are the optimal entropic potentials (defined up to a constant). 

We follow the construction described in \cite[Definition~3.8, Prop 3.9]{Gentil2017}: $P^*$ is a mixture over $\pi^*$ of Brownian bridges, its time-$t$ marginal, the \emph{entropic interpolant}, has density with respect to Lebesgue measure given by
\begin{equation}\label{eq:rho_t_def}
    \rho^{SB}_t(z) = \int_{\R^d\times\R^d} \frac{p_t(x,z)\,p_{1-t}(z,y)}{p_1(x,y)}\,\de\pi^*(x,y) \propto \int_{\R^d\times\R^d} e^{\frac{u(x)+v(y)-\psi_{t}(x,z)-\psi_{1-t}(y,z)}{\eps}} \de\mu(x)\de\nu(y),
\end{equation}
which integrates to $1$ by \eqref{eq:CK} applied with $\de\pi^*$ and where we used the expression of $p_1$ from \eqref{eq:heat-kernel} to simplify some terms.%

We now show that the entropic interpolant $\rho^{SB}_t$ coincides with the minimizer $\eta_t$ of \eqref{eq:eta_t_Gaussien}, giving a purely static, variational, barycentric construction of the dynamical bridge.

\begin{proposition}[Equivalence of entropic interpolation and the Schr\"odinger bridge]\label{prop:equivalence_schrodinger}
    Let $\bbX=\R^d$ with $c(x,y)=|x-y|^2$, let $\mu,\nu\in\calP(\R^d)$ with finite second moment, $\eps>0$, and let $\lambda$ be Lebesgue measure on $\R^d$. For $t\in(0,1)$ set $\psi_t(x,z)=|x-z|^2/t$ and let $\eta_t\in\calP(\R^d)$ be the unique minimizer of
    \begin{equation}\label{eq:functional_F}
        \calF_t(\eta) \coloneqq \OT^\eps_{\psi_t}(\mu,\eta)+\OT^\eps_{\psi_{1-t}}(\nu,\eta)+\eps\,\KL(\eta|\lambda).
    \end{equation}
    Then $\eta_t=\rho^{SB}_t$, the time-$t$ marginal of the Schr\"odinger bridge between $\mu$ and $\nu$ with reference Brownian motion of diffusivity $\eps/2$.
\end{proposition}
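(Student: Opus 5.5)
The plan is to redo the lifting argument of \Cref{thm:EOT_debiased} with the asymmetric pair $(\psi_t,\psi_{1-t})$. As in \eqref{eq:OTeps-decomp-psi-gamma}, \Cref{lem:KL-decomp2} gives
\[
\inf_{\eta}\calF_t(\eta)=\inf_{\eta\in\calP(\R^d),\ \gamma\in\Pi(\mu,\nu,\eta)}\int\big[\psi_t(x,z)+\psi_{1-t}(y,z)\big]\de\gamma+\eps\KL(\gamma|\mu\otimes\nu\otimes\lambda).
\]
The correspondence between minimizers of the two problems is exact. If $\eta$ minimizes $\calF_t$, the glued measure $\overline\gamma=(\pi_1)_z\otimes(\pi_2)_z\otimes\eta$ turns \eqref{eq:KL-ineq-txt} into an equality and is optimal for the three-marginal problem. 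Conversely, any optimal $\gamma$ has third marginal minimizing $\calF_t$. So $\eta_t$ is the third marginal of the optimal $\gamma$, and uniqueness of $\eta_t$ (from strict convexity of $\KL(\cdot|\lambda)$) means it suffices to identify that marginal.

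The optimal $\gamma$ is found by disintegrating $\gamma=\gamma_{x,y}\otimes\pi$ with $\pi=(\p_1,\p_2)_\#\gamma$ and applying the chain rule \eqref{eq:chain-KL}. The inner minimization over $\gamma_{x,y}$ yields the Gibbs measure
\[
\rho^*_{x,y}(z)=\frac{1}{A_{x,y}}\,e^{-\frac{\psi_t(x,z)+\psi_{1-t}(y,z)}{\eps}}\lambda(z),
\qquad
A_{x,y}=\int_{\R^d} e^{-\frac{\psi_t(x,z)+\psi_{1-t}(y,z)}{\eps}}\de\lambda(z).
\]
The remaining problem in $\pi$ is $\OT^\eps_{c^t_{\eps,\lambda}}(\mu,\nu)$. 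By the computation in \Cref{ex:entropic-interpolation}, $c^t_{\eps,\lambda}=|x-y|^2-\frac{d\eps}{2}\log(t(1-t)\pi\eps)$, which differs from $c$ by a constant. The optimal $\pi$ is therefore the unique optimal plan $\pi^*=e^{\frac{u(x)+v(y)-|x-y|^2}{\eps}}\mu\otimes\nu$ of $\OT^\eps_c(\mu,\nu)$, which by \cite{Leonard2014} is the endpoint law of the Schrödinger bridge. The optimal $\gamma^*=\rho^*_{x,y}\otimes\pi^*$ is then unique, since both the inner Gibbs minimizer and the outer plan are unique.

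It remains to compute the third marginal of $\gamma^*$ and compare it with \eqref{eq:rho_t_def}. We have $\frac{\de\eta_t}{\de\lambda}(z)=\int\rho^*_{x,y}(z)\,\de\pi^*(x,y)$. Using $\psi_t(x,z)/\eps=|x-z|^2/(t\eps)$, we get
\[
\frac{e^{-(\psi_t(x,z)+\psi_{1-t}(y,z))/\eps}}{A_{x,y}}=\frac{p_t(x,z)\,p_{1-t}(z,y)}{p_1(x,y)},
\]
because the normalizing constants $(\pi\eps t)^{-d/2}(\pi\eps(1-t))^{-d/2}$ and $(\pi\eps)^{-d/2}$ cancel against $A_{x,y}=e^{-|x-y|^2/\eps}(t(1-t)\pi\eps)^{d/2}$. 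This is exactly the Chapman--Kolmogorov \eqref{eq:CK} normalization. Hence $\eta_t=\rho^{SB}_t$ by \eqref{eq:rho_t_def}.

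The main obstacle is justifying the first step rigorously. We need existence of a minimizer $\eta_t$ and of optimal plans for $\OT^\eps_{\psi_t}(\mu,\eta)$, and we need all quantities finite so that the chain-rule manipulations are legitimate. Finite second moments of $\mu,\nu$ give a finite value, for instance by testing $\eta=\rho^{SB}_t$ or a Gaussian convolution. For existence, $\lambda$ is Lebesgue and not a probability measure, so I would first rewrite $\KL(\eta|\lambda)$ relative to a Gaussian reference, absorbing the quadratic term into the costs. Coercivity then comes from the quadratic growth of $\psi_t$. If existence turns out to be delicate, the direction needed can be obtained differently: show directly that $\overline\gamma=\rho^*_{x,y}\otimes\pi^*$ attains the infimum, which yields a minimizer whose third marginal is $\rho^{SB}_t$, and uniqueness finishes the argument.
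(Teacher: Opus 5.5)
Your proposal is correct and follows the same route as the paper. You lift $\calF_t$ to the three-marginal problem as in \Cref{thm:EOT_debiased}, identify the optimizer as $\rho^{*}_{x,y}\otimes\pi^*$ with $\pi^*$ the optimal plan of $\OT^\eps_c(\mu,\nu)$ (since $c^t_{\eps,\lambda}-c$ is constant), and read off its $z$-marginal as $\rho^{SB}_t$. You are in fact slightly more careful than the paper: you justify that the third marginal of the optimal lifted plan minimizes $\calF_t$, and you obtain the exact identity $e^{-(\psi_t(x,z)+\psi_{1-t}(y,z))/\eps}/A_{x,y}=p_t(x,z)\,p_{1-t}(z,y)/p_1(x,y)$ rather than only a proportionality.
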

\begin{proof} We are in a time-dependent variant of the setting of \Cref{cor:EOT_debias_Gaussian_like}, where the Chapman-Kolmogorov equation \eqref{eq:CK} is nothing else than the extension of \eqref{eq:eot_c_eps_lambda_c} beyond midpoint settings. As we only focus on the minimizer, we can discard the translation terms of \eqref{eq:eta_t_Gaussien}.

As discussed in \Cref{ex:entropic-interpolation}, nothing in the proof of \Cref{thm:EOT_debiased} is specific to the form $\psi+\psi$, so the result holds identically for $\psi_t+\psi_{1-t}$. From this proof, we know that the optimal plan satisfies $\gamma^{*,t}=\rho^{*,t}_{x,y}\otimes \pi^*$
where
\[
\rho^{*,t}_{x,y}(z)=\frac{1}{A^t_{x,y}} e^{\frac{-\psi_{t}(x,z)-\psi_{1-t}(y,z)}{\eps}} \lambda(z),
\]
where $A^t_{x,y}=\int_\calZ e^{\frac{-\psi_{t}(x,z)-\psi_{1-t}(y,z)}{\eps}}\de \lambda(z)$ is the normalizing constant and $\pi^*(x,y)=e^{\frac{u(x)+v(y)-|x-y|^2}{\eps}}\de\mu(x)\de\nu(y)$. We thus just have to show that the marginal in $z$ of $\gamma^{*,t}$ coincides with $\rho^{SB}_t$.%

Since we only manipulate probabilities, all computations can be done up to irrelevant factors depending on time only. Also note that by \eqref{eq:parallelogram_law} and Gaussian integration, we have $e^{\frac{|x-y|^2}{\eps}}A^t_{x,y}=C^t_\eps>0$ for all $x,y$. Consequently we have
\begin{align*}
    \eta_t (z)=(\p_3)_\#\gamma^{*,t} (z)&=\int_{\R^d\times\R^d} \de \gamma^{*,t}(x,y,z)=\lambda(z)\int_{\R^d\times\R^d} \frac{1}{A^t_{x,y}} e^{\frac{u(x)+v(y)-|x-y|^2-\psi_{t}(x,z)-\psi_{1-t}(y,z)}{\eps}} \de\mu(x)\de\nu(y)\\
    &=\frac{\lambda(z)}{C^t_\eps}\int_{\R^d\times\R^d} e^{\frac{u(x)+v(y)-\psi_{t}(x,z)-\psi_{1-t}(y,z)}{\eps}} \de\mu(x)\de\nu(y) \stackrel{\eqref{eq:rho_t_def}}{\propto} \rho^{SB}_t(z).
\end{align*}
Hence the minimizer of the barycentric formulation $\eta_t$ coincides with the entropic interpolant $\rho^{SB}_t$.

\end{proof}

\begin{remark}[Entropic interpolation for negative definite costs]\label{rem:general_kernels} 
    The argument extends verbatim to any continuous negative definite cost $c(x,y)=|\phi(x)-\phi(y)|_{H_c}^2+s(x)+s(y)$, using the parallelogram identity in $H_c$,
    \[
    |\phi(x)-\phi(y)|_{H_c}^2 = \tfrac1t|\phi(x)|_{H_c}^2+\tfrac1{1-t}|\phi(y)|_{H_c}^2-\Big|\sqrt{\tfrac{1-t}t}\phi(x)+\sqrt{\tfrac{t}{1-t}}\phi(y)\Big|_{H_c}^2,
    \]
    in place of \eqref{eq:parallelogram_law}, with $\calZ=H_c$ and $\lambda$ a Gaussian measure absorbing the squared norm. Using the Gaussian integral identity $\int_{\mathcal{Z}} \exp(\langle V, z \rangle_{H_c}) \de\lambda(z) = \exp(\frac{1}{2}|V|_{H_c}^2)$ for $V \in H_c$, we can exponentiate the negative of the cross term:
    $$\exp\left( \frac{1}{\varepsilon} \left| \sqrt{\frac{1-t}{t}}\phi(x) + \sqrt{\frac{t}{1-t}}\phi(y) \right|_{H_c}^2 \right) = \int_{\mathcal{Z}} \exp\left( \sqrt{\frac{2}{\varepsilon}} \left\langle \sqrt{\frac{1-t}{t}}\phi(x) + \sqrt{\frac{t}{1-t}}\phi(y) , z \right\rangle_{H_c} \right) \de\lambda(z)$$
    This allows us to explicitly define time-dependent potentials for any $t \in (0,1)$:
	$$ \psi_t(x,z) \coloneqq - \sqrt{2\eps \frac{1-t}{t}} \langle \phi(x), z \rangle_{H_c} + \frac{1}{t}|\phi(x)|_{H_c}^2 + s(x). $$
	Because $\lambda$ is Gaussian, unlike \eqref{eq:eta_t_Gaussien}, here no compensating constant is needed since $\lambda$ is already normalized, yielding exactly $\OT^\eps_c(\mu,\nu) = \inf_{\eta\in\calP(\calZ)} \OT^\eps_{\psi_{t}}(\mu,\eta) + \OT^\eps_{\psi_{1-t}}(\nu,\eta) + \eps\,\KL(\eta|\lambda)$. While the Schr\"odinger bridge describes a flow of probability measures over $\bbX$, $\eta_t$ is this time a measure over the feature space $H_c$.
\end{remark}
\begin{remark}[Dual interpolation and flat RKHS geometry]\label{rem:RKHS_interpolation_sharper}
Coming back to the setting of \Cref{thm:EOT_debias_kernel_compact}, the algebraic identity \eqref{eq:inner_product_identity} underlying the minimax theorem generalizes for $t \in (0,1)$ to a weighted parallelogram law:
    \begin{equation}\label{eq:inner_product_t}
        -(k_{\mu'}, k_{\nu'})_k = \inf_{z \in \calH_k} \frac{1}{t}\| z - k_{\mu'} \|_k^2 + \frac{1}{1-t}\| z - k_{\nu'} \|_k^2 - \frac{1-t}{t}\| k_{\mu'} \|_k^2 - \frac{t}{1-t}\| k_{\nu'} \|_k^2.
    \end{equation}
    The proof of Theorem~\ref{thm:EOT_debiased} then gives that the optimal intermediate potential $z_t^*$ is the convex combination:
    \begin{equation}\label{eq:z_star_t}
        z_t^* = (1-t)k_{\mu^{*'}} + t k_{\nu^{*'}},
    \end{equation}
    where $\mu^{*'}, \nu^{*'}$ are the optimally tilted measures. The mapping $t \mapsto z_t^*$ is a Euclidean geodesic in the flat geometry of $\calH_k$. 

    In the specific case $\mu = \delta_x$, $\nu = \delta_y$ with $c(x,y)=|x-y|^2$, since $c(x,x)=0$, the Sinkhorn potentials for the self-transports satisfy $\bar u_x = 0$, so $\mu^{*'} = \delta_x$ and $\nu^{*'} = \delta_y$. We thus have
    \[
        z_t^* = (1-t)\,k(x,\cdot) + t\,k(y,\cdot),
    \]
    a Gaussian mixture with only the weights varying in $t$. By contrast, the primal interpolant from \Cref{prop:equivalence_schrodinger} is the Gaussian $\eta_t = \calN\!\bigl((1-t)x+ty,\,\tfrac{\eps t(1-t)}{2}\,I_d\bigr)$.%
\end{remark}

    The two perspectives established by our inf-representations demonstrate that entropic interpolation induces fundamentally distinct geometric paths across different spaces:
    \begin{enumerate}[label=\roman*),labelindent=0em,leftmargin=2em]
        \item \textbf{In $\calP(\bbX)$ or $\calP(\bbZ)$ via $\eta_t$:} The entropic interpolant $\eta_t$ defines a flow of probability measures. As shown in \Cref{prop:equivalence_schrodinger}, this non-linear displacement corresponds to the time-marginals of a Schr\"odinger bridge, coupling the measures via a geometric-mean density.
        \item \textbf{In $\calH_k$ or $H_c$ via $z_t^*$:} Over the flat reproducing kernel Hilbert space, the variable $z_t^* = (1-t)k_{\mu^{*'}} + t k_{\nu^{*'}}$ follows a straight segment. 
    \end{enumerate}

\begin{remark}[Entropic Interpolants vs.\ Sinkhorn Riemannian Geodesics]
    It is crucial to distinguish our interpolation $z_t^*$ from the Riemannian geodesics of the \emph{Sinkhorn divergence} formalized recently in \cite{lavenant2024riemannian}. For a self-transport ($\mu = \nu$), the primal interpolant $\eta_t$ of the Schr\"odinger bridge inflates in variance for intermediate $t \in (0,1)$ (reflecting that $\OT^\eps_c(\mu,\mu) > 0$), while $z_t^*$ remains constant as $(1-t)k_{\mu^{*'}} + t k_{\mu^{*'}} = k_{\mu^{*'}}$. By contrast, Lavenant et al.\ study the geodesics induced by the Sinkhorn divergence $\Seps$, embedding
    $\calP(\bbX)$ into a subset of $\calH_k$. Because $\Seps$ is actively debiased, their geodesics differ from the Schr\"odinger bridge as discussed in \cite[Section 6.1]{lavenant2024riemannian}.%
\end{remark}
\nc

	\bibliographystyle{alpha}
	\bibliography{bib_EOT}
	
	\section*{Acknowledgements}
	
	We acknowledge the financial support of European Research Council (ERC) under the European Union’s
	Horizon 2020 Research and Innovation Programme – Grant Agreement n°101077204 HighLEAP. 
	
	\newpage
	
	\appendix
	
	\titleformat{\section}[hang]
	{\normalfont\Large\bfseries}
	{Appendix~\thesection:}
	{0.5em}
	{}
	
	\section{Proof of \Cref{prop:tpsd_feat_map}}\label{proof:tpsd_feat_map}

	\ref{it_tpsd}$\Rightarrow$\ref{it_featMap}.
	We use below always the rule $\infty-\infty=+\infty$. Take $\calZ=\calX\times \calX$, and consider the function $\psi$ such that, for all $x,y\in \calX\times \calX$, $\psi(x,(x,y))=c(x,x)/2$ and $\psi(x,(y,x))=c(x,y)- c(y,y)/2$. We set to $+\infty$ all the other values of $\psi(x,(u,v))$, for which $x\not \in \{u,v\}$, whence $\psi$ takes its values in $\R\cup\{+\infty\}$. There are two cases to consider: $c(x,x)$ and $c(x,y)$ for $x\neq y$.
	
	Take $x\neq y \in \calX$. Then, by definition of $\psi$, the only $z$ for which the values $\psi(x,z)$ and $\psi(y,z)$ can be finite are $z\in\{(x,y),(y,x)\}$. As $c$ is symmetric, we obtain that
	\begin{align*}
		\inf_{z\in \calZ} \psi(x,z) + \psi(y,z)
		&= \min \left(\psi(x,(x,y)) + \psi(y,(x,y)), \psi(x,(y,x)) + \psi(y,(y,x))\right)\\
		&= \min\left(\frac{c(x,x)}{2}+c(y,x)-\frac{c(x,x)}{2}, c(x,y)-\frac{c(y,y)}{2}+\frac{c(y,y)}{2}\right)=c(y,x), 
	\end{align*}
	the equality holding for $c(x,x)=+\infty$ or $c(y,y)=+\infty$, since we then have $c(y,x)>\infty$ by the debiasability assumption.
	
	Concerning $c(x,x)$, we write similarly
	\begin{align*}
		\inf_{z\in \calZ} \psi(x,z) + \psi(x,z)
		&= 2\inf_{y\in \calX} \min\left(\psi(x,(x,y)), \psi(x,(y,x))\right)\\
		&= 2\min\left(\frac{c(x,x)}{2}, \inf_{y\in \calX}[c(x,y)-\frac{c(y,y)}{2}]\right)= c(x,x), 
	\end{align*}
	where we used that $c(x,x)+c(y,y) \leq 2 c(x,y)$.

	\ref{it_featMap}$\Rightarrow$\ref{it_tpsd}. The kernel $c$ as defined by \eqref{eq:feature_map_carac} is symmetric and we assumed it takes its values in $\R\cup\{+\infty\}$. Then, for all $z\in \calZ$,
	\begin{equation*}
		2(\psi(x,z) + \psi(y,z))\ge 2 \inf_{z\in \calZ} \psi(x,z)+ 2\inf_{z\in \calZ}\psi(y,z)= c(x,x)+c(y,y),
	\end{equation*} 
	taking the infimum over $z\in \calZ$ allows us to conclude.\\ 
	
	Assume now that $\calX$ is equipped with a topology. We then equip $\calZ=\calX\times \calX$ with the product topology. If $c$ is separately l.s.c.\ and $x\mapsto c(x,x)$ is continuous with $\calX$ a Hausdorff space, we are going to show that the $\psi(x,\cdot)+\psi(y,\cdot)$ is l.s.c.\ for all $x,y$.
	
	Consider a net $(z_\alpha)_{\alpha\in A}$ where $z_\alpha=(x'_\alpha,y'_\alpha)$. Assume that $(z_\alpha)_{\alpha\in A}$ converges to some $z=(x',y')$, the limit being unique by \cite[Theorem 2.12]{aliprantis2006} since $\calZ$ is Hausdorff, as $\calX$ is. Let $ A'=\{ \alpha \, | \, \psi(x,z_\alpha) + \psi(y,z_\alpha) < +\infty\}$. We are going to use the characterization of lower semicontinuity given by \cite[Lemma 2.42]{aliprantis2006},
	\begin{equation}\label{eq:lsc_psi}
		\liminf_{\alpha \in A} \psi(x,z_\alpha) + \psi(y,z_\alpha) \ge \psi(x,z) + \psi(y,z).
	\end{equation}
	If $A' =\emptyset$ the l.h.s.\ is $+\infty$, so the inequality holds. Assume that $A' \neq \emptyset$, so $\max(c(x,y),c(x,x),c(y,y))<+\infty$. If $x\neq y$, then we can remove the $+\infty$ values, and for $\alpha \in A'$, we have $z_\alpha\in \{ (x,y), (y,x)\}$, so $z=(x,y)$ or $z=(y,x)$, furthermore, as $\psi(x,\cdot) + \psi(y,\cdot) \ge c(x,y)$, we have
	\begin{equation*}%
		\liminf_{\alpha \in A} \psi(x,z_\alpha) + \psi(y,z_\alpha) =  \liminf_{\alpha \in A'} \psi(x,z_\alpha) + \psi(y,z_\alpha) \ge c(x,y)
	\end{equation*}
	If $x=y$, then, for $\alpha \in A'$, $z_\alpha=(x_\alpha,x)$ or $z_\alpha=(x,x'_\alpha)$, creating two nets $x_\alpha$ and $x'_\alpha$. Since $z_\alpha$ converges to $(x',y')$, we have either $x=x'$, in which case
	\begin{equation*}%
		2\liminf_{\alpha \in A} \psi(x,z_\alpha) =2 \liminf_{\alpha \in A'} \psi(x,z_\alpha)=2 \liminf_{\alpha \in A'} \psi(x,(x,x'_\alpha))=\frac{c(x,x)}{2}= \psi(x,(x',y')).
	\end{equation*}
	Otherwise $x=y'$ and then
	\begin{multline*}%
		2\liminf_{\alpha \in A} \psi(x,z_\alpha) = 2 \liminf_{\alpha \in A'} \psi(x,z_\alpha)=2 \liminf_{\alpha \in A'} \psi(x,(x_\alpha,x))\\
		=\liminf_{\alpha \in A'}\Big[ c(x,x_\alpha)-\frac{c(x_\alpha,x_\alpha)}{2} \Big] \ge c(x,x')-\frac{c(x',x')}{2}=\psi(x,(x',y')).
	\end{multline*}
	
	If $c$ is jointly continuous and $\calX$ a Hausdorff space, then we equip $\calX\times \calZ$ also with the product topology. We are going to show that the $\psi$ we identified is jointly l.s.c.\ Consider a net $(x_\alpha,z_\alpha)_{\alpha\in A}$ where $z_\alpha=(x'_\alpha,y'_\alpha)$, we again use the characterization of lower semicontinuity given by \cite[Lemma 2.42]{aliprantis2006}. Assume that $(x_\alpha,z_\alpha)_{\alpha\in A}$ converges to some $(x,(x',y'))$, the limit being unique by \cite[Theorem 2.12]{aliprantis2006} since $\calX\times\calZ$ is Hausdorff, as $\calX$ is. If $ A$ is finite, then the net is constant after some element $\alpha_0$ and
	\begin{equation}\label{eq:lsc_psi}
		\liminf_{\alpha \in A} \psi(x_\alpha,z_\alpha) \ge \psi(x,(x',y')).
	\end{equation}
	trivially holds with equality. If $ A$ is infinite, then we can define three subsets, $ A_0=\{ \alpha \, | \, x_\alpha\notin \{x'_\alpha,y'_\alpha\}\}$, $ A_1=\{ \alpha \, | \, x_\alpha= x'_\alpha\}$, $ A_2=\{ \alpha \, | \, x_\alpha= y'_\alpha\}$. Notice that $ A_0 \cap ( A_1 \cup  A_2)=\emptyset$ and for $\alpha \in  A_0$, $\psi(x_\alpha,z_\alpha)=+\infty$. If $ A_1 \cup  A_2$ is finite then the l.h.s.\ of \eqref{eq:lsc_psi} is $+\infty$ and \eqref{eq:lsc_psi} thus holds.
	
	Assume that $ A_1 \cup  A_2$ is infinite. Consequently as we remove only $+\infty$ values, we have
	\begin{equation*}%
		\liminf_{\alpha \in A} \psi(x_\alpha,z_\alpha) = \liminf_{\alpha \in  A_1 \cup  A_2} \psi(x_\alpha,z_\alpha).
	\end{equation*}
	If $A_1$ is infinite, then $x=x'$. By definition, for all $\alpha \in A_1$, $\psi(x_\alpha,z_\alpha)= \frac{c(x_\alpha,x_\alpha)}{2}$. Moreover, by assumption on $c$, for all $\alpha \in A_2$, $\psi(x_\alpha,z_\alpha)=c(x_\alpha,x'_\alpha)-\frac{c(x'_\alpha,x'_\alpha)}{2} \ge \frac{c(x_\alpha,x_\alpha)}{2}$. Consequently, we have
	\begin{equation*}%
		\liminf_{\alpha \in  A_1 \cup  A_2} \psi(x_\alpha,z_\alpha) \ge \liminf_{\alpha \in  A_1 \cup  A_2} \frac{c(x_\alpha,x_\alpha)}{2}=\frac{c(x,x)}{2}= \psi(x,(x',y')),
	\end{equation*}
	where we used the joint continuity of $c$. If $A_1$ is finite, then $A_2$ must be infinite and thus $x=y'$.
	\begin{equation*}%
		\liminf_{\alpha \in  A_1 \cup  A_2} \psi(x_\alpha,z_\alpha) = \liminf_{\alpha \in  A_2} \, \Big[c(x_\alpha,x'_\alpha)-\frac{c(x'_\alpha,x'_\alpha)}{2}\Big]=c(x,x')-\frac{c(x',x')}{2}= \psi(x,(x',y')).
	\end{equation*}
	We have thus shown that \eqref{eq:lsc_psi} holds in every subcase.
	
	\section{Useful lemmas}
	
	We gather here a few important results that are used throughout the article. We recall that we will always consider Polish spaces endowed with their Borel $\sigma$-algebra.
	
	\subsection{Probabilistic form of $\OT$}\label{sec:int-inf_prob-form}
	
	For the sake of completeness, we include the following lemma on the probabilistic form of optimal transport, which is well-known in the community, see e.g.\ \cite[p3]{Rachev1998} or \cite[p3]{Villani2003}.
	\begin{lemma}\label{lem:prob-form}
		Assuming $\bbX$ is a Polish space, we can equivalently write
		\begin{equation}\label{eq:equiv-ot-proba}
			\OT_{c_{}}(\mu,\nu)\coloneqq\inf_{\pi \in \Pi(\mu,\nu)} 
			\int_{\bbX\times\bbX} c_{}(x,y) \de\pi(x,y)=\inf_{(T_1)_\# \lambda=\mu, (T_2)_\# \lambda=\nu}  \int_{[0,1]} c_{}(T_1(w),T_2(w)) \de \lambda(w),
		\end{equation}
		where $([0,1],\calB,\lambda)$ is the probability space $[0,1]$ endowed with its Borel $\sigma$-algebra and the restricted Lebesgue measure, and the infimum is taken over random variables $T_1,T_2$, i.e. measurable maps defined on $[0,1]$ and valued in $\bbX$.
	\end{lemma}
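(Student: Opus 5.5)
The statement to prove is \Cref{lem:prob-form}: the equality between the coupling formulation and the random-variable formulation of $\OT_c$. The plan is to prove the two inequalities separately.

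\textbf{Inequality ``$\le$''.} Take measurable $T_1,T_2:[0,1]\to\bbX$ with $(T_1)_\#\lambda=\mu$ and $(T_2)_\#\lambda=\nu$. Set $\pi\coloneqq(T_1,T_2)_\#\lambda$. This is a Borel probability measure on $\bbX\times\bbX$, since $(T_1,T_2)$ is measurable into the product $\sigma$-algebra, which equals the Borel $\sigma$-algebra of $\bbX\times\bbX$ because $\bbX$ is Polish (hence second countable). Its marginals are $\mu$ and $\nu$. The change of variables formula for pushforwards gives
\[
\int_{[0,1]} c(T_1(w),T_2(w))\de\lambda(w)=\int_{\bbX\times\bbX} c\,\de\pi .
\]
This formula is valid for $c$ measurable and bounded from below, or whenever one side makes sense. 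Hence the right-hand side of \eqref{eq:equiv-ot-proba} is at least $\OT_c(\mu,\nu)$.

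\textbf{Inequality ``$\ge$''.} Fix any $\pi\in\Pi(\mu,\nu)$. We need measurable $T=(T_1,T_2):[0,1]\to\bbX\times\bbX$ with $T_\#\lambda=\pi$. The space $\bbX\times\bbX$ is Polish, so it is Borel isomorphic to a Borel subset of $\R$ (Kuratowski's theorem). Concretely, choose a Borel injection $\iota:\bbX\times\bbX\to\R$ with Borel inverse on its image. Let $F$ be the distribution function of $\iota_\#\pi$ and $F^{-1}$ its generalized (quantile) inverse. Then $(F^{-1})_\#\lambda=\iota_\#\pi$, which is the standard inverse transform sampling.

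Next, modify $F^{-1}$ on a $\lambda$-null set so that it takes values in $\iota(\bbX\times\bbX)$. This is possible because $\iota_\#\pi$ is concentrated on that Borel set. Define $T\coloneqq\iota^{-1}\circ F^{-1}$. Then $T_\#\lambda=\pi$, and its components $T_1,T_2$ push $\lambda$ to $\mu$ and $\nu$ respectively. The same change of variables as above shows that the integral of $c(T_1,T_2)$ equals $\int c\,\de\pi$. Taking the infimum over $\pi$ gives the reverse inequality.

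This is an instance of the general fact that every Borel probability measure on a Polish space is the law of a random variable on $([0,1],\lambda)$. One could equally invoke the Skorokhod representation or the Borel isomorphism theorem directly.

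\textbf{Main obstacle.} The delicate point is the measure-theoretic construction in the second inequality. One must make sure that the Borel isomorphism and the quantile map compose into a genuinely Borel map with the correct law, and that null-set modifications are harmless. A secondary point is that, in the ``$\le$'' direction, integrals of $c$ possibly taking the value $+\infty$ are still well defined. This holds under the standing assumption that $c$ is measurable with values in $\R\cup\{+\infty\}$ and bounded below, or at least that the integrals are defined.

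Finally, the same construction yields the remark used in \Cref{prop:direct_proofs_ot}: if an optimal plan $\pi^*$ exists, the maps $T_1,T_2$ built from $\pi^*$ are optimal in the probabilistic formulation.
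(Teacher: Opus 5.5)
Your proof is correct and follows the same route as the paper. The inequality with $\OT_c(\mu,\nu)$ on the smaller side comes from pushing $\lambda$ forward under $(T_1,T_2)$, and the reverse one from realizing every $\pi\in\Pi(\mu,\nu)$ as the law of a pair of maps on $([0,1],\lambda)$ through a Borel isomorphism with a subset of the real line. Your version is more explicit than the paper's: you apply Kuratowski's theorem directly to $\bbX\times\bbX$ and use inverse transform sampling with a null-set correction, whereas the paper only sketches this step by composing Borel isomorphisms and identifying $[0,1]\times[0,1]$ with $[0,1]$.
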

	\begin{proof}
		This is a consequence of the fact that any Polish space is Borel isomorphic to a closed subset of the real line with the same cardinality. Indeed, given any probability measure $\mu\in\calP(\bbX)$, this can be seen as the law of a random variable: trivially, construct the probability space $(\bbX,\calF,\mu)$ and take the identity map. Composing this with the Borel isomorphism, we can find a random variable on $([0,1],\calB,\lambda)$ with law $\mu$. 
		In the same way, if $\gamma\in\Pi(\mu,\nu)$ is a probability measure on the Cartesian product $\bbX\times\bbX$, then we can always find $T:[0,1]\times [0,1]\to \bbX$ such that $(T)_\#\lambda=\gamma$. Moreover, since $[0,1]\times [0,1]$ is Borel isomorphic to $[0,1]$, composing with the Borel isomorphism between these two spaces we can encode the map $T$ as $T=(T_1,T_2)$ with $T_1,T_2:[0,1]\to \bbX$. Then, since any $\gamma\in\Pi(\mu,\nu)$ can be written as $\gamma=(T_1,T_2)_\#\lambda$, the l.h.s. in \eqref{eq:equiv-ot-proba} is bigger than the r.h.s.. Since on the other hand any couple $T_1,T_2:[0,1]\to \bbX$ such that $(T_1)_\#\lambda=\mu$ and $(T_2)_\#\lambda=\nu$ defines a transport plan, we obtain the equivalence in \eqref{eq:equiv-ot-proba}.
	\end{proof}
	
	\subsection{Commuting infimum and integral}\label{sec:int-inf}
	
	To achieve our inf-representation in \Cref{fact:ot}, we used the following lemma to commute infimum and integral.
    
	\begin{lemma}\label{lem:comm_inf-int}
		Let \((\bbX,\mathcal A,\mu)\) be a complete probability space and let \(\calZ\) be a Polish space.
		Let $\tilde f:\bbX\times\calZ\to\R\cup\{+\infty\}$ be \(\mathcal A\otimes\mathcal B(\calZ)\)-measurable. %
		Define
		\[
		f(x)\coloneqq \inf_{z\in\calZ}\tilde f(x,z),\qquad x\in\bbX.
		\]
		Then \(f\) is $\mathcal{A}$-measurable and
		\[
		\int_\bbX f(x)\,\de\mu(x)
		=
		\inf_{T}\int_\bbX \tilde f\bigl(x,T(x)\bigr)\,\de\mu(x),
		\]
		where the infimum is taken over all \(\mathcal A\otimes\mathcal B(\calZ)\)-measurable maps
		\(T:\bbX\to\calZ\).
	\end{lemma}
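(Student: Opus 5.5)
The lemma has two parts: measurability of $f$, and the commutation of infimum and integral. I will handle them in that order. The tool is the measurable projection and measurable selection theory for complete probability spaces with a Polish second factor, in the spirit of the Aumann/von Neumann selection theorem.

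\textbf{Measurability.} For each $a\in\R$,
\[
\{x : f(x)<a\} = \mathrm{proj}_\bbX\{(x,z) : \tilde f(x,z)<a\}.
\]
The set inside the projection lies in $\mathcal A\otimes\mathcal B(\calZ)$. By the measurable projection theorem (complete $\mathcal A$, Polish $\calZ$), its projection belongs to $\mathcal A$. Hence $f$ is $\mathcal A$-measurable, with values in $[-\infty,+\infty]$.

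\textbf{Inequality $\le$.} For any measurable $T$ we have $\tilde f(x,T(x))\ge f(x)$ pointwise. Integrating gives $\int f\,\de\mu\le\int\tilde f(\cdot,T)\,\de\mu$ whenever the integrals make sense. Taking the infimum over $T$ proves this direction.

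\textbf{Inequality $\ge$.} This is the main step. I would build near-optimal measurable selections. Fix $\delta>0$ and set $g_\delta(x)=f(x)+\delta$ where $f(x)>-\infty$, and $g_\delta(x)=-1/\delta$ where $f(x)=-\infty$. This $g_\delta$ is $\mathcal A$-measurable. Consider the set
\[
G=\{(x,z) : \tilde f(x,z)<g_\delta(x)\}\in\mathcal A\otimes\mathcal B(\calZ).
\]
Its projection onto $\bbX$ is $\{f<+\infty\}$. By Aumann's selection theorem, there is an $\mathcal A$-measurable $T_\delta$ with $(x,T_\delta(x))\in G$ for every $x\in\{f<+\infty\}$; this uses completeness of $\mu$. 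On $\{f=+\infty\}$, let $T_\delta$ be an arbitrary constant $z_0$. There both sides equal $+\infty$, so nothing is lost.

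Then $\tilde f(x,T_\delta(x))\le g_\delta(x)$ everywhere that matters. If $\int f\,\de\mu=+\infty$, there is nothing to prove. Otherwise, $f^+$ is integrable, so $\int g_\delta\,\de\mu\to\int f\,\de\mu$ as $\delta\to0$, by monotone convergence on the part where $f=-\infty$. This yields the reverse inequality.

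The main obstacle is the selection step. The graph $G$ is only product-measurable, not closed, so a Kuratowski–Ryll-Nardzewski-type selector is unavailable. One genuinely needs completeness of $(\bbX,\mathcal A,\mu)$ to get a selector that is $\mathcal A$-measurable rather than merely universally measurable. One must also handle integrability conventions when $f$ takes the value $-\infty$ on a positive-measure set or when $f^-$ is not integrable. I would treat the latter by truncating, replacing $g_\delta$ by $\max(g_\delta,-n)$-type constructions, and letting $n\to\infty$.
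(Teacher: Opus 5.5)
Your proposal is correct and follows essentially the same route as the paper. For measurability, both arguments write $\{f<a\}$ as the projection of $\{\tilde f<a\}$, which lies in $\mathcal A$ because a complete $\sigma$-algebra is closed under the Souslin operation. For the nontrivial inequality, both use a von Neumann--Aumann measurable selection of $\delta$-optimal points, with the same cap $-1/\delta$ on $\{f=-\infty\}$.

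The differences are cosmetic. The paper uses the non-strict level set $\tilde f(x,z)\le \max(f(x),-1/\eta)+\eta$, whose sections are nonempty everywhere, so $\{f=+\infty\}$ needs no separate treatment. Your explicit handling of the cases $\int f\,\mathrm{d}\mu=+\infty$ and $\mu(f=-\infty)>0$ is slightly more careful than the paper's. Your closing truncation remark is unnecessary, since $g_\delta$ already handles those cases.
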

	
	\begin{proof}
        To show that $f$ is measurable, we must show that $f^{-1}((-\infty, a)) \in \mathcal A$ for all $a\in\R$. Rewriting, we obtain
\begin{multline*}
    f^{-1}((-\infty,a))=\{x\in\bbX:f(x)<a\}=\{x\in\bbX:\inf_z \tilde f(x,z)<a\}=\{x\in\bbX:\exists z\in\calZ:\tilde f(x,z)<a\}\\
    =\operatorname{proj}_\bbX\{(x,z)\in\bbX\times\calZ:\tilde f(x,z)<a\}.
\end{multline*}
    The set $\{(x,z)\in\mathbb{X}\times\mathcal{Z}:\tilde{f}(x,z)<a\}$ belongs to $\mathcal{A}\otimes\mathcal{B}(\mathcal{Z})$ by assumption. Because $\mathcal{Z}$ is a Polish space, the projection of this measurable set onto $\mathbb{X}$ is an $\mathcal{A}$-Souslin (or analytic) set \cite[Theorem 6.9.12]{bogachev2007measure}. However, since the probability space $(\mathbb{X},\mathcal{A},\mu)$ is complete, the $\sigma$-algebra $\mathcal{A}$ is stable under the Souslin operation. This guarantees that the projection belongs to $\mathcal{A}$. Consequently, $f^{-1}((-\infty,a))\in\mathcal{A}$, proving that $f$ is $\mathcal{A}$-measurable.
        
		Concerning the commutation, first of all, note that one has the following immediate inequality
		\[
		\begin{aligned}
			\inf_{T} \int \tilde f(x,T(x)) \de\mu(x) &\ge \int \inf_{z\in\calZ} \tilde f(x;z) \de \mu(x) =\int_\bbX f(x)\de\mu(x).
		\end{aligned}
		\]
		Let us show the converse.

		Fix \(\eta>0\) and define the correspondence
		\[
		A_\eta(x) \coloneqq \{z \in \calZ : \tilde f(x,z) \le \max(f(x), -1/\eta) + \eta\}.
		\]
		Each value \(A_\eta(x)\) is nonempty and \(\mathcal B(\calZ)\)-measurable. The graph is
		\[
		\operatorname{Gr}(A_\eta)
		=
		\{(x,z)\in\bbX\times\calZ:\ \tilde f(x,z)-\max(f(x), -1/\eta)\le \eta\},
		\]
		which is measurable because $\tilde f$ and $f$ are measurable. By \cite[Theorem 6.9.13]{bogachev2007measure}, there exists a measurable selector
		\(T_\eta:\bbX\to\calZ\) such that \(T_\eta(x)\in A_\eta(x)\) for \(\mu\)-a.e.\ \(x\).
		Thus for $f(x)>-\infty$ and $\eta$ small enough
		\[
		\tilde f\bigl(x,T_\eta(x)\bigr)\le f(x)+\eta
		\qquad\text{for \(\mu\)-a.e.\ }x,
		\]
		whereas $\tilde f(x, T_\eta(x)) \le -1/\eta + \eta$ for $f(x) = -\infty$. Integrating both sides and letting $\eta \downarrow 0$ yields the desired upper bound.
		
	\end{proof}
	
	\subsection{$\KL$ decomposition}\label{proof:chain rule}
	
	Let us first recall a few elements on the disintegration of measures to set our notation.
	
	\medskip
	
	Let $X,Y$ be Polish spaces and $\alpha\in\calP(X\times Y)$. We call disintegration of $\alpha$ with respect to its second marginal the measurable (with respect to the Borel $\sigma$-algebra of $Y$) family $(\alpha_y)_{y\in Y}\subset \calP(X)$ of probability measures such that, for any Borel function $f:X\times Y\to \R$,
	\[
	\int_{X\times Y} f(x,y)\de\alpha(x,y)=\int_Y \left(\int_X f(x,y) \de\alpha_y(x)\right) \de\nu(y),
	\]
	and we denote for concision (but with a slight abuse of notation) $\alpha=\alpha_y\otimes \nu$.
	This is more precisely the disintegration with respect to the projection to the second component of $X\times Y$, and more general disintegrations can be defined with respect to any Borel map, see for example \cite[Chapter 5]{ags08} for more details.
	One may analogously define the disintegration with respect to the first marginal. The family $(\alpha_y)_{y\in Y}$ always exists in our setting and it is furthermore $\nu$-a.e. unique. In the particular case of $\alpha=\mu\otimes\nu$, for $\mu\in\calP(X)$ and $\nu\in\calP(Y)$, we have $\alpha_y=\mu$ for any $y\in Y$ and $\alpha_x=\nu$ for any $x\in X$. Finally, note that for any measure $\alpha\in\calP(X\times Y)$, $\alpha\ll \mu\otimes\nu$ if and only if  $\alpha_y\ll \mu$ for any $y\in Y$ and $\alpha_x\ll\nu$ for any $x\in X$.

	One important application of the disintegration of measures is the so-called gluing of transport plans. Namely, consider a third Polish space $Z$, a measure $\eta\in\calP(Z)$ and two couplings $\pi_1\in\Pi(\mu,\eta)$, $\pi_2\in\Pi(\nu,\eta)$. Disintegrating these in the form $\pi_1=(\pi_1)_z\otimes \eta$ and $\pi_2=(\pi_2)_z\otimes \eta$, we can construct a coupling $\gamma$ as $\gamma=((\pi_1)_z\otimes(\pi_2)_z) \otimes \eta$, a probability measure on $X\times Y\times Z$. One can check in particular that $\gamma\in\Pi(\mu,\nu,\eta)$, i.e. the marginals of $\gamma$ are $\mu,\nu$ and $\eta$ respectively, and, by definition, its disintegration with respect to $\eta$ is precisely $(\pi_1)_z\otimes(\pi_2)_z$, the product measure between the respective disintegrations of $\pi_1$ and $\pi_2$. 
	This construction serves specifically to produce probability measures on larger product spaces by gluing together measures defined on lower-dimensional spaces that share common marginals.
	Note in particular that $(\p_1,\p_3)_\#\gamma=\pi_1$ and $(\p_2,\p_3)_\#\gamma=\pi_2$. Such a measure $\gamma$ so constructed is not the only one with this property, as one may define analogously $\gamma_z \otimes \eta$ for any family of measures $(\gamma_z)_{z\in Z}\subset \Pi(\mu,\nu)$. See \cite{ags08,santambrogio2015optimal} for more details on the gluing of transport plans (therein referred as \textit{gluing lemma}) and its application in optimal transport.
	
	We recall now a few important facts about the Kullback-Leibler ($\KL$) divergence.
	Let again $X,Y$ be Polish spaces and $\mu_1,\mu_2\in \calP(X)$, $\nu_1,\nu_2\in \calP(Y)$. Consider $\alpha,\beta\in \calP(X\times Y)$ such that $\alpha\in \Pi(\mu_1,\nu_1)$ and $\beta\in \Pi(\mu_2,\nu_2)$, which we may disintegrate as $\alpha=\alpha_y \otimes\nu_1, \beta=\beta_y \otimes\nu_1$,
	for the families of probability measures $(\alpha_y)_{y\in Y},(\beta_y)_{y\in Y}\subset \calP(X)$.
	The chain rule (see e.g. \cite[Chapter 2]{cover1999elements}) establishes then
	\begin{equation}\label{eq:chain-KL}
		\KL(\alpha|\beta) =\int_Y \KL(\alpha_y|\beta_y) \de\nu_1(y)+\KL(\nu_1 |\nu_2),
	\end{equation}
	and the analogous formula holds in the case we disintegrate with respect to the first marginal instead of the second one. In particular, if $\alpha$ and $\beta$ share the same second marginal, i.e. $\nu_1=\nu_2$, then the second term on the right hand-side of \eqref{eq:chain-KL} vanishes. Since the $\KL$ divergence is always nonnegative, a simple consequence of the chain rule is that for any $\mu_1,\mu_2\in\calP(X)$, $\eta_1,\eta_2\in\calP(Y)$ and any $\alpha\in \Pi(\mu_1,\eta_1)$, $\beta\in\Pi(\mu_2,\eta_2)$,
	\begin{equation}\label{eq:marginalization}
		\KL(\alpha|\beta)= \int_Y \KL(\alpha_x|\beta_x) \de\mu_1(x)+\KL(\mu_1 |\mu_2)\ge	\KL(\mu_1 |\mu_2).
	\end{equation}
	Finally, for product measures $\mu_1\otimes\nu_1$ and $\mu_2\otimes\nu_2$,
	\begin{equation}\label{eq:KL-product}
		\begin{aligned}\KL(\mu_1\otimes\nu_1|\mu_2\otimes\nu_2)&=
			\int_{X\times Y} \log\left(\frac{\de\mu_1}{\de\mu_2}(x)\right)\de\mu_1(x) + \int_{X\times Y} \log\left(\frac{\de\nu_1}{\de\nu_2}(y)\right)\de\mu_2(y)\\
			&=\KL(\mu_1|\mu_2)+\KL(\nu_1|\nu_2),
		\end{aligned}
	\end{equation}
	which follows from the product rule for the Radon-Nikodym derivative \cite[Section 3.2, Exercise 12]{folland1999real}%
	\begin{equation}
		\frac{\de \mu_1\otimes\nu_1}{\de \mu_2\otimes\nu_2}(x,y)=\frac{\de \mu_1}{\de\mu_2}(x)\frac{\de \mu_2}{\de\nu_2}(y), \quad \text{$\nu_1\otimes\nu_2$-a.e.,}
	\end{equation}
	assuming $\mu_i\ll \nu_i$ for $i=1,2$, otherwise both sides above are $+\infty$.%
	
	Here are two important results on the decomposition of the Kullback-Leibler divergence for three-marginals probability measures. 
	
	\begin{lemma}\label{lem:chain_rule_KL}
		Let $X,Y,Z$ be Polish spaces. Take $\mu\in\calP(X), \nu\in \calP(Y)$ and $\eta\in\calP(Z)$. Then
		\begin{enumerate}[label=\roman*)]
			\item for any $\gamma\in\Pi(\mu,\nu,\eta)$,
			\begin{equation}\label{eq:KL_decomp-gen}
				\KL(\gamma|\mu\otimes\nu\otimes\eta)=\KL(\alpha|\mu\otimes\eta)+\KL(\beta|\nu\otimes\eta)+I(\gamma)
			\end{equation}
			where $\alpha=(\p_1,\p_3)_\#\gamma$, $\beta=(\p_2,\p_3)_\#\gamma$ and $I(\gamma)\coloneqq \int_Z \KL(\gamma_z|(\p_1)_\#\gamma_z\otimes (\p_2)_\#\gamma_z) \de\eta(z) \ge 0$, and in particular
			\begin{equation}\label{eq:KL_decomp-ineq}
				\KL(\gamma|\mu\otimes\nu\otimes\eta)\ge \KL(\alpha|\mu\otimes\eta)+\KL(\beta|\nu\otimes\eta);
			\end{equation}
			\item if $\gamma$ can be disintegrated in the form $\gamma=\left(\alpha_z\otimes\beta_z\right) \otimes \eta$, i.e. it coincides with the gluing of $\alpha$ and $\beta$,
			\begin{equation}\label{eq:KL_decomp-spec}
				\KL(\gamma|\mu\otimes\nu\otimes\eta)=\KL(\alpha|\mu\otimes\eta)+\KL(\beta|\nu\otimes\eta).
			\end{equation}
		\end{enumerate}
	\end{lemma}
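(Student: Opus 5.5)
The plan is to use the chain rule \eqref{eq:chain-KL}, disintegrating with respect to the third marginal. Fix $\gamma\in\Pi(\mu,\nu,\eta)$ and write $\gamma=\gamma_z\otimes\eta$, where $\gamma_z\in\calP(X\times Y)$. The reference measure disintegrates as $\mu\otimes\nu\otimes\eta=(\mu\otimes\nu)\otimes\eta$, with constant conditional law $\mu\otimes\nu$. Both measures share the $Z$-marginal $\eta$, so the chain rule gives
\[
\KL(\gamma|\mu\otimes\nu\otimes\eta)=\int_Z \KL(\gamma_z|\mu\otimes\nu)\,\de\eta(z).
\]
If $\gamma\not\ll\mu\otimes\nu\otimes\eta$, both sides of \eqref{eq:KL_decomp-gen} are $+\infty$; this follows from the marginalization inequality \eqref{eq:marginalization} together with the reverse bound proved below. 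I would handle this case first and then assume finiteness.

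The core step is a pointwise identity in $z$. Set $a_z=(\p_1)_\#\gamma_z$ and $b_z=(\p_2)_\#\gamma_z$, which are exactly the disintegrations $\alpha_z$ and $\beta_z$ of $\alpha$ and $\beta$ with respect to $\eta$. Where the densities exist, the Radon--Nikodym derivative factors as
\[
\frac{\de\gamma_z}{\de\mu\otimes\nu}=\frac{\de\gamma_z}{\de a_z\otimes b_z}\cdot\frac{\de a_z}{\de\mu}(x)\,\frac{\de b_z}{\de\nu}(y).
\]
This is the same manipulation as in the proof of \Cref{lem:OT_implies_UOT}, using \eqref{eq:KL-product}. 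Taking logarithms and integrating against $\gamma_z$ gives
\[
\KL(\gamma_z|\mu\otimes\nu)=\KL(\gamma_z|a_z\otimes b_z)+\KL(a_z|\mu)+\KL(b_z|\nu).
\]
Integrating in $\eta$ and applying the chain rule again to $\alpha=a_z\otimes\eta$ and $\beta=b_z\otimes\eta$ against $\mu\otimes\eta$ and $\nu\otimes\eta$ turns $\int\KL(a_z|\mu)\,\de\eta$ into $\KL(\alpha|\mu\otimes\eta)$, and similarly for $\beta$. This yields \eqref{eq:KL_decomp-gen}. The inequality \eqref{eq:KL_decomp-ineq} then follows from $I(\gamma)\ge0$, since $\KL\ge0$.

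For part ii), if $\gamma_z=\alpha_z\otimes\beta_z$ then $\gamma_z$ equals its own product of marginals. Hence $\KL(\gamma_z|a_z\otimes b_z)=0$ for $\eta$-a.e.\ $z$, so $I(\gamma)=0$ and \eqref{eq:KL_decomp-spec} follows.

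I expect the main obstacle to be rigor rather than ideas. First, one has to split $\log$ of a product into a sum of logs when individual terms may be $+\infty$ or not integrable. To handle this, I would work with nonnegative parts, or use the decomposition of $\KL$ as a supremum; alternatively, I would argue that all three terms are nonnegative, so the sum is well defined in $[0,+\infty]$. Second, one needs joint measurability in $z$ of $z\mapsto\KL(\gamma_z|a_z\otimes b_z)$. This holds because $\KL$ is lower semicontinuous, hence Borel, on $\calP\times\calP$ over Polish spaces, and the disintegration maps are measurable.
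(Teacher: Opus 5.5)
Your proposal is correct and follows essentially the same route as the paper: disintegrate along $Z$, apply the chain rule, and split the logarithm of the density of $\gamma_z$ with respect to $\mu\otimes\nu$ through $a_z\otimes b_z$ to get the pointwise identity; then integrate in $\eta$, reapply the chain rule, and deduce ii) from $I(\gamma)=0$. The one loose spot is your justification of the infinite case, which the paper simply asserts and where the marginalization inequality points the wrong way: argue instead that if $\gamma_z\not\ll\mu\otimes\nu$ on a set of positive $\eta$-measure, then for each such $z$ either $a_z\not\ll\mu$, or $b_z\not\ll\nu$, or else $a_z\otimes b_z\ll\mu\otimes\nu$ forces $\gamma_z\not\ll a_z\otimes b_z$, so one of the three terms on the right-hand side is $+\infty$.
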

	\begin{proof}
		$i)$ Disintegrate $\gamma$ as $\gamma=\gamma_z\otimes \eta$.  If $\gamma_z$ is not absolutely continuous with respect to $\mu\otimes\nu$, then both the l.h.s.\ and the r.h.s.\ are infinite.
		Assume thus that $\gamma_z\ll (\mu\otimes\nu)$, for any $z\in Z$. Then, $(\p_1)_\#\gamma_z\otimes (\p_2)_\#\gamma_z\ll (\mu\otimes\nu)$ and $\gamma_z\ll (\p_1)_\#\gamma_z\otimes (\p_2)_\#\gamma_z$ and we can write, for any $z\in Z$,
		\[
		\begin{aligned}
			\KL(\gamma_z|\mu\otimes\nu)&=\int_{X\times Y} \log\left(\frac{\de\gamma_z}{\de(\mu\otimes\nu)}(x,y)\right) \de \gamma_z(x,y) \\
			&=\int_{X\times Y} \left[ \log\left(\frac{\de\gamma_z}{\de(\p_1)_\#\gamma_z\otimes (\p_2)_\#\gamma_z}(x,y)\right) + \log\left(\frac{\de(\p_1)_\#\gamma_z\otimes (\p_2)_\#\gamma_z}{\de(\mu\otimes\nu)}(x,y)\right) \right]\de \gamma_z(x,y).
		\end{aligned}	
		\]
		Since both $(\p_1)_\#\gamma_z\otimes (\p_2)_\#\gamma_z$ and $\mu\otimes\nu$ are products measures,
		\begin{equation*}
			\int_{X\times Y} \log\left(\frac{\de(\p_1)_\#\gamma_z\otimes (\p_2)_\#\gamma_z}{\de\mu\otimes\nu}(x,y)\right) \de \gamma_z(x,y)=\KL((\p_1)_\#\gamma_z|\mu)+\KL((\p_2)_\#\gamma_z|\nu),
		\end{equation*}
		and we obtain
		\[
		\KL(\gamma_z|\mu\otimes\nu)=\int_{X\times Y} \log\left(\frac{\de\gamma_z}{\de(\p_1)_\#\gamma_z\otimes (\p_2)_\#\gamma_z}(x,y)\right) \de \gamma_z(x,y)+\KL((\p_1)_\#\gamma_z|\mu)+\KL((\p_2)_\#\gamma_z|\nu).
		\]
		Now, integrating with respect to $\eta$ and using the chain-rule \eqref{eq:chain-KL},
		\[
		\begin{gathered}
			\int_Z \KL(\gamma_z|\mu\otimes\nu) \de\eta(z)=\KL(\gamma|\mu\otimes\nu\otimes\eta), \\
			\int_Z \KL((\p_1)_\#\gamma_z|\mu) \de\eta(z)=  \KL(\alpha|\mu\otimes\eta), \\
			\int_Z \KL((\p_2)_\#\gamma_z|\nu) \de\eta(z)=  \KL(\beta|\nu\otimes\eta),
		\end{gathered}
		\]
		and we obtain \eqref{eq:KL_decomp-gen}.
		
		$ii)$ If $\gamma$ can be disintegrated in the form $\gamma=\left(\alpha_z\otimes\beta_z\right)\eta$, then we have $\gamma_z=\alpha_z\otimes\beta_z$, $(\p_1)_\#\gamma_z=\alpha_z$ and $(\p_2)_\#\gamma_z=\beta_z$ and the term $I$ in \eqref{eq:KL_decomp-gen} vanishes.
	\end{proof}

	\begin{lemma}\label{lem:KL-decomp2}
		Let $X,Y,Z$ be Polish spaces. For any $\mu\in\calP(X),\nu\in\calP(Y)$ and $\eta,\lambda\in\calP(Z)$, any $\pi_1\in \Pi(\mu,\eta)$ and $\pi_2\in\Pi(\nu,\eta)$,
		\begin{equation}\label{eq:KL-decomp2-gen}
			\KL(\pi_1|\mu\otimes\eta)+\KL(\pi_2|\nu\otimes\eta)+\KL(\eta|\lambda)=\KL(\gamma|\mu\otimes\nu\otimes\lambda),
		\end{equation}
		where $\gamma\in\Pi(\mu,\nu,\eta)$ is defined as $\gamma=\left((\pi_1)_z\otimes (\pi_2)_z\right) \otimes \eta$, for the disintegrations $\pi_1=(\pi_1)_z \otimes\eta$ and $\pi_2= (\pi_2)_z \otimes\eta$. Both the l.h.s.\ and the r.h.s.\ are finite if and only if $\eta\ll\lambda$, and in particular we recover \eqref{eq:KL_decomp-spec} for $\lambda=\eta$.
		For a general $\gamma\in \Pi(\mu,\nu,\eta)$ instead,
		\begin{equation}\label{eq:KL-decomp2-ineq}
			\KL(\pi_1|\mu\otimes\eta)+\KL(\pi_2|\nu\otimes\eta)+\KL(\eta|\lambda)\le\KL(\gamma|\mu\otimes\nu\otimes\lambda).
		\end{equation}
	\end{lemma}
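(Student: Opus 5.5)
The plan is to reduce \eqref{eq:KL-decomp2-gen} to the three-marginal identity \eqref{eq:KL_decomp-spec} of \Cref{lem:chain_rule_KL} by one chain rule in the $z$-variable. The same device, applied to \eqref{eq:KL_decomp-ineq}, will give \eqref{eq:KL-decomp2-ineq}.

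\textbf{Step 1.} Take a general $\gamma\in\Pi(\mu,\nu,\eta)$ and disintegrate it with respect to its third marginal, $\gamma=\gamma_z\otimes\eta$. Since $\mu\otimes\nu\otimes\lambda=(\mu\otimes\nu)\otimes\lambda$ has constant disintegration $\mu\otimes\nu$, the chain rule \eqref{eq:chain-KL} (disintegrating along $Z$) gives
\[
\KL(\gamma|\mu\otimes\nu\otimes\lambda)=\int_Z\KL(\gamma_z|\mu\otimes\nu)\,\de\eta(z)+\KL(\eta|\lambda).
\]
Applying the same chain rule with reference $\eta$ in place of $\lambda$, the integral term equals $\KL(\gamma|\mu\otimes\nu\otimes\eta)$. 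Hence
\[
\KL(\gamma|\mu\otimes\nu\otimes\lambda)=\KL(\gamma|\mu\otimes\nu\otimes\eta)+\KL(\eta|\lambda).
\]
This holds in $[0,+\infty]$, because every term is nonnegative and no $\infty-\infty$ arises.

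\textbf{Step 2.} Set $\pi_1=(\p_1,\p_3)_\#\gamma$ and $\pi_2=(\p_2,\p_3)_\#\gamma$.
\begin{itemize}
\item For a general $\gamma$, combine Step 1 with \eqref{eq:KL_decomp-ineq} of \Cref{lem:chain_rule_KL}; this yields \eqref{eq:KL-decomp2-ineq}.
\item For the glued $\gamma=((\pi_1)_z\otimes(\pi_2)_z)\otimes\eta$, the marginals are indeed $(\p_1,\p_3)_\#\gamma=\pi_1$ and $(\p_2,\p_3)_\#\gamma=\pi_2$, by the gluing discussion above. Part ii) of \Cref{lem:chain_rule_KL} then gives equality in \eqref{eq:KL_decomp-spec}, and adding $\KL(\eta|\lambda)$ via Step 1 gives \eqref{eq:KL-decomp2-gen}.
\item Taking $\lambda=\eta$ kills the last term and recovers \eqref{eq:KL_decomp-spec}.
\end{itemize}

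\textbf{Step 3 (finiteness).} Both sides of \eqref{eq:KL-decomp2-gen} are equal and are sums of nonnegative terms. Finiteness therefore requires $\KL(\eta|\lambda)<\infty$, which forces $\eta\ll\lambda$. As the statement is phrased ("finite iff $\eta\ll\lambda$"), the converse needs finiteness of $\KL(\eta|\lambda)$ together with that of $\KL(\pi_i|\cdot\otimes\eta)$. I would state it as: the two sides are simultaneously finite, and this requires $\eta\ll\lambda$, with the other two terms finite. This is the point I would be careful with, since absolute continuity alone does not guarantee finite entropy.

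\textbf{Main obstacle.} The main technical issue is rigor in the chain rule when densities fail to exist. If $\gamma_z\not\ll\mu\otimes\nu$ on an $\eta$-positive set, or $\eta\not\ll\lambda$, I would check that both sides are $+\infty$ consistently. This follows from \eqref{eq:chain-KL} holding in the extended sense and from measurability of $z\mapsto\KL(\gamma_z|\mu\otimes\nu)$, which is standard for Polish spaces.
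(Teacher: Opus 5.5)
Your proposal is correct and essentially follows the paper's proof. The paper also gets the inequality from \eqref{eq:KL_decomp-ineq} combined with $\KL(\gamma|\mu\otimes\nu\otimes\lambda)=\KL(\gamma|\mu\otimes\nu\otimes\eta)+\KL(\eta|\lambda)$, which it derives via the Radon--Nikodym factorization rather than the chain rule, and it proves the equality by a direct chain-rule computation equivalent to your appeal to part ii) of \Cref{lem:chain_rule_KL}. Your caution in Step 3 is justified: the paper's proof asserts that every $\pi\in\Pi(\mu,\eta)$ satisfies $\pi\ll\mu\otimes\eta$, which fails for instance for the diagonal coupling of a nonatomic measure with itself, and $\eta\ll\lambda$ does not give $\KL(\eta|\lambda)<+\infty$, so only your weakened finiteness statement actually holds.
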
 
	\begin{proof}
		If $\eta$ is not absolutely continuous w.r.t. $\lambda$, then $\KL(\eta|\lambda)=+\infty$ by definition of the Kullback-Leibler divergence and then one easily recognize that also $\KL(\gamma|\mu\otimes\nu\otimes\lambda)=+\infty$ (for example via the chain rule). The other two terms are always finite since for any $\pi\in\Pi(\mu,\eta)$, $\pi\ll\mu\otimes\eta$.
		
		Using the chain rule on the term on the r.h.s., we obtain
		\[
		\KL(\gamma|\mu\otimes\nu\otimes\lambda)=\int_Z \KL(\gamma_z|\mu\otimes\nu) \de \eta(z) +\KL(\eta|\lambda).
		\]
		On the other hand, using the disintegrations  $\pi_1=(\pi_1)_z\otimes \eta$ and $\pi_2=(\pi_2)_z\otimes \eta$, the product rule for the Radon-Nykodim derivative and again the chain rule,
		\[
		\begin{aligned}
			\KL(\pi_1|\mu\otimes\eta)+\KL(\pi_2|\nu\otimes\eta)&=
			\int_{Z} \left[ \int_X \log\left(\frac{\de(\pi_1)_z}{\de\mu}(x)\right) \de(\pi_1)_z(x)+\int_{Y} \log\left(\frac{\de(\pi_2)_z}{\de\nu}(y)\right)(\pi_2)_z(y) \right] \de\eta(z) \\
			&=\int_Z \left[ \int_{X\times Y} \left[ \log\left(\frac{\de(\pi_1)_z \otimes (\pi_2)_z}{\de \mu \otimes \nu}(x,y)\right)\right] \de(\pi_1)_z \otimes (\pi_2)_z (x,y)\right] \de\eta(z)\\
			&=\int_Z \KL(\gamma_z|\mu\otimes\nu) \de \eta(z).
		\end{aligned}
		\]
		This gives \eqref{eq:KL-decomp2-gen}. Concerning inequality \eqref{eq:KL-decomp2-ineq}, using \eqref{eq:KL_decomp-ineq},
		\[
		\KL(\pi_1|\mu\otimes\eta)+\KL(\pi_2|\nu\otimes\eta)+\KL(\eta|\lambda) \le  \KL(\gamma|\mu\otimes\nu\otimes\eta)+\KL(\eta|\lambda),
		\]
		and by a similar computation as in \Cref{lem:OT_implies_UOT},
		\[
		\begin{aligned}
			\KL(\gamma|\mu\otimes\nu\otimes\lambda) &= \int_{X\times Y\times Z} \left[\log\left(\frac{\de\gamma}{\de\mu\otimes\nu\otimes \eta}(x,y,z)\right)+\log\left(\frac{\de\mu\otimes\nu\otimes \eta}{\de\mu\otimes\nu\otimes \lambda}(x,y,z)\right)\right]\de\gamma(x,y,z) \\[0.5em]
			&=\KL(\gamma|\mu\otimes\nu\otimes\eta)+\KL(\eta|\lambda).
		\end{aligned}
		\]
	\end{proof}

\end{document}